\numberwithin{equation}{section}
\numberwithin{figure}{section}
\newtheorem{thm}{Theorem}[section]
\newtheorem{lem}[thm]{Lemma}
\newtheorem{prop}[thm]{Proposition}
\newtheorem{cor}[thm]{Corollary}
\newtheorem{definition}[thm]{{Definition}}
\newtheorem{remark}[thm]{{Remark}}
\newtheorem{example}[thm]{\rm{Example}}
\newenvironment{df}{\begin{definition}\rm}{\end{definition}}
\newenvironment{rem}{\begin{remark}\rm}{\end{remark}}
\newenvironment{ex}{\begin{example}\rm}{\end{example}}
\title{Unknotting numbers and triple point cancelling numbers of torus-covering knots}
\author{Inasa Nakamura}
\address{
Department of Mathematics, 
Gakushuin University, 
1-5-1 Mejiro Toshima-ku Tokyo, 171-8588 JAPAN} 
\email{inasa@math.gakushuin.ac.jp}
\date{}
\subjclass[2010]{Primary 57Q45; Secondary 57Q35} 
\keywords{Surface knot; 2-dimensional braid; quandle cocycle invariant; unknotting number; triple point cancelling number}
\begin{document}

\begin{abstract}
It is known that any surface knot can be transformed to an unknotted surface knot or a surface knot which has a diagram with no triple points by a finite number of 1-handle additions. The minimum number of such 1-handles is called the unknotting number or the triple point cancelling number, respectively. 
In this paper, we give upper bounds and lower bounds of unknotting numbers and triple point cancelling numbers of torus-covering knots, 
which are surface knots in the form of coverings over the standard torus $T$. 
Upper bounds are given by using $m$-charts on $T$ presenting torus-covering knots, and lower bounds are given by using quandle colorings and quandle cocycle invariants.

\end{abstract}
\maketitle
\section{Introduction}
A {\it surface link (knot)} is the image of a smooth embedding of a (connected) closed surface into the Euclidean 4-space $\mathbb{R}^4$. In this paper, we assume that a surface link is oriented.  A surface link is said to be {\it unknotted} if it is equivalent to the boundary of a disjoint union of handlebodies \cite{HoKa79}. 
For a surface link $F$, a {\it 1-handle} is an oriented 3-ball $H=D^2 \times I$ embedded in $\mathbb{R}^4$ such that $H \cap F=(D^2 \times \partial I) \cap F$ and the orientations coincide for $H\cap F$, where $D^2$ is a 2-disk and $I$ is an interval. By a {\it 1-handle addition} we obtain a new surface link $F^\prime$ from $F$ by
\[
F^\prime=(F-(\mathrm{Int}D^2 \times \partial I)) \cup (\partial D^2 \times I), 
\]
where we give $F^\prime$ the orientation induced from $F$ (see \cite{Boyle88, HoKa79}).
It is known \cite{HoKa79} (see also \cite{Kamada99}) that for any surface link $F$, there is a sequence of 1-handle additions such that the result is an unknotted surface link or a surface link which has a diagram with no triple points called {\it pseudo-ribbon} \cite{Kawauchi02}. The minimum number of such 1-handle additions is called the {\it unknotting number} denoted by $u(F)$ or the {\it triple point cancelling number} denoted by $\tau(F)$, respectively. Since an unknotted surface link is pseudo-ribbon, it is obvious that 
\[
0 \leq \tau(F) \leq u(F).
\]
Kamada \cite{Kamada99} gave an upper bound of the unknotting number by using a graphical method called an $m$-chart presenting a surface link. Iwakiri \cite{Iwakiri} gave a lower bound of the unknotting number or the triple point cancelling number by using quandle colorings and quandle cocycle invariants. 
For other results on unknotting numbers or triple point cancelling numbers, see \cite{HoKa79, HMS79, Kanenobu96, KM97, Kawauchi02, Miyazaki86, Satoh04}.
In this paper, we consider for $F$ a \lq\lq torus-covering $T^2$-link", which is a surface link in the form of an unbranched covering over the standard torus $T$ \cite{N}. A  torus-covering $T^2$-link is determined from a pair of commutative $m$-braids $a$ and $b$, which is denoted by $\mathcal{S}_m(a,b)$.
The aim of this paper is to give an upper bound and a lower bound of the unknotting number or the triple point cancelling number of $\mathcal{S}_m(a,b)$ in certain particular form, by using the methods based on \cite{Iwakiri, Kamada99}.
 
Let us denote by $\Delta$ an $m$-braid with a full twist. 
\begin{thm} \label{thm:0503-01}
For any $m$-braid $b$ and any integer $n$, 
\[
u(\mathcal{S}_m(b, \Delta^n)) \leq m-1. 
\]
\end{thm}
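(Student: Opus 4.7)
The plan is to exhibit $m-1$ explicit 1-handles whose successive addition transforms $\mathcal{S}_m(b,\Delta^n)$ into an unknotted surface link, following the chart-based approach of Kamada.

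First, I would present $\mathcal{S}_m(b,\Delta^n)$ as a 2-dimensional braid sitting in the tubular neighbourhood $T\times D^2$ of the standard torus $T$ in $\mathbb{R}^4$, with braid monodromies $b$ and $\Delta^n$ along the two generating loops of $T$; equivalently, by an $m$-chart $\Gamma$ on $T$ supported on two transverse generating curves, one labelled by the word $b$ and the other by the word $\Delta^n$. The commutativity $b\Delta^n=\Delta^n b$, which follows from $\Delta$ being central in $B_m$, is what makes this chart well defined and is the structural feature the argument will exploit.

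Next, I would fix a point $q\in T$ in the complement of the two carrier curves, take the $m$ points $q_1,\dots,q_m$ of $\mathcal{S}_m(b,\Delta^n)$ in the $D^2$-fibre over $q$, and attach $m-1$ short disjoint 1-handles $H_1,\dots,H_{m-1}$ with $H_i$ connecting $q_i$ to $q_{i+1}$ by a straight core arc inside that single fibre. Since all cores live in one fibre, the handles do not interact with the chart elsewhere; in the chart picture they insert a small standard cluster of $\sigma_1,\dots,\sigma_{m-1}$-edges in a disk around $q$. Call the resulting surface link $F'$.

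Finally, I would verify that $F'$ is unknotted by reducing its chart to the trivial chart via C-moves. Using the centrality of $\Delta^n$, the $\Delta^n$-portion of $\Gamma$ can be slid freely across the $b$-portion and collected near $q$, where the cluster of edges coming from the 1-handles provides cancellation sites that absorb first the $\Delta^n$-edges and then the $b$-edges. What is left is a trivial chart on $T$, which presents the boundary of a standardly embedded handlebody in $T\times D^2$, hence an unknotted surface link.

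The principal obstacle is this last step: one must check in detail that $m-1$ handles really suffice to clear \emph{both} the $b$-portion and the $\Delta^n$-portion of the chart, and that no hidden triple-point structure survives the reduction. This is precisely where the two hypotheses are used --- the centrality of $\Delta$ allows the $\Delta^n$-edges to be moved past the $b$-edges without introducing new obstructions, and one 1-handle per adjacent pair $(i,i+1)$ is enough because $\sigma_1,\dots,\sigma_{m-1}$ already generate $B_m$. A careful pictorial or inductive verification of the chart moves after handle insertion will be required to carry this out rigorously.
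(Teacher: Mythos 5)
Your proposal is correct and follows essentially the same route as the paper: insert the $m-1$ free edges $F_1,\dots,F_{m-1}$ (equivalently, attach $m-1$ one-handles along a single fibre), use the centrality of $\Delta$ to cancel the $b$-portion and the $\Delta^n$-portion of the chart against these free edges, and conclude via the fact that a chart consisting only of free edges presents an unknotted surface link (Lemma \ref{0302-2} and Proposition \ref{prop:1}). The verification you defer at the end is precisely the content of the paper's key Lemma \ref{lem2}, where the relation $\Delta^n\sigma_j\Delta^{-n}=\sigma_j$ is used to recover the free edge $F_j$ after it absorbs each letter $\sigma_j^{\pm1}$ of $b$.
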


From now on, we consider torus-covering knots, i.e. torus-covering links with one component except in Theorem \ref{thm:0510-3}. 
Let $p$ be an odd prime. For an $m$-braid $b$, we denote by $\hat{b}$ the closure of $b$, and we use the same notation $\hat{b}$ for a diagram of $\hat{b}$. For the dihedral quandle $R_p$ of order $p$, we call an $R_p$-coloring a {\it $p$-coloring} \cite{CJKLS, Fox} (see also \cite{Iwakiri}). Let us denote by $\mathrm{Col}_p(\hat{b})$ the set of $p$-colorings for $\hat{b}$, and we denote the number of its elements by $|\mathrm{Col}_p(\hat{b})|$; see Section \ref{sec:4}.
It is known \cite{Iwakiri} that $\mathrm{Col}_p(\hat{b})$ is a linear space isomorphic to $(\mathbb{Z}/p\mathbb{Z})^k$ and hence $|\mathrm{Col}_p(\hat{b})|=p^k$ for some positive integer $k$ (see Lemma \ref{lem:linear}). 
 
\begin{thm}\label{thm:unknotting}
Put $l=2$ (resp. $p$) if $m$ is odd (resp. even).  
Let $b$ be an $m$-braid such that $\hat{b}$ is a knot. 
Let $k$ be a positive integer such that 
$|\mathrm{Col}_p(\hat{b})|=p^k$. 
Then, for any integer $n$,
\[
u(\mathcal{S}_m(b, \Delta^{ln})) \geq k-1. 
\]
\end{thm}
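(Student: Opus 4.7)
The strategy follows Iwakiri's framework \cite{Iwakiri} for lower-bounding unknotting numbers via $p$-coloring counts. The first ingredient is the standard multiplicativity lemma: a single 1-handle addition changes $|\mathrm{Col}_p(F)|$ by a factor of $p$, $1$, or $p^{-1}$. Chaining $u$ such additions transforming $F = \mathcal{S}_m(b, \Delta^{ln})$ into an unknotted surface $F'$ yields
\[
|\mathrm{Col}_p(F)| \leq p^u \cdot |\mathrm{Col}_p(F')|.
\]
Since $F$ is connected (a knot) and 1-handle additions applied within a single component cannot create new components, $F'$ is a connected unknotted surface knot. For such an $F'$ the knot group is $\mathbb{Z}$, all meridians lie in one conjugacy class, and only the constant colorings survive, giving $|\mathrm{Col}_p(F')| = p$.

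The second ingredient is the computation of $|\mathrm{Col}_p(\mathcal{S}_m(b, \Delta^{ln}))|$. A $p$-coloring of a torus-covering $T^2$-link $\mathcal{S}_m(a, c)$ corresponds to a tuple in $R_p^m$ fixed by the canonical actions of both braids $a$ and $c$ on $R_p^m$ induced by the dihedral quandle operation. Tuples fixed by $b$ alone are in bijection with $\mathrm{Col}_p(\hat{b}) \cong (\mathbb{Z}/p\mathbb{Z})^k$. I would then argue that $\Delta^{ln}$ acts as the identity on $R_p^m$ under the stated choice of $l$: namely, $\Delta^2$ acts trivially on $R_p^m$ when $m$ is odd, while the action of $\Delta^2$ has order dividing $p$ on $R_p^m$ when $m$ is even. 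Granted this, $|\mathrm{Col}_p(\mathcal{S}_m(b, \Delta^{ln}))| = p^k$, and combining with the multiplicativity bound gives $p^k \leq p^{u+1}$, i.e., $u \geq k-1$, which is the desired inequality.

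The main obstacle is verifying the claimed triviality of $\Delta^{ln}$ on $R_p^m$. This reduces to a concrete linear-algebra computation over $\mathbb{Z}/p\mathbb{Z}$: express $\Delta$ as a word in the standard generators $\sigma_i$, compute the induced linear map on $R_p^m$ using the elementary dihedral action $(x_i, x_{i+1}) \mapsto (2x_{i+1} - x_i, x_i)$, and determine its order separately in the two parity cases. The parity split is natural: after factoring out the diagonal (constant) subspace, the reduced representation of $\Delta^2$ has quite different eigenstructure depending on whether $m$ is odd (where an involutive symmetry forces triviality) or even (where one can only guarantee $p$-torsion order). I would isolate this triviality statement as a self-contained lemma before the theorem, so that the proof of Theorem \ref{thm:unknotting} consists just of assembling the three ingredients.
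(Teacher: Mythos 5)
Your proposal is correct and takes essentially the same route as the paper: the triviality of the $\Delta^{ln}$-action on $R_p^m$ is exactly the paper's Lemma \ref{lem:0501-2} (giving $|\mathrm{Col}_p(\mathcal{S}_m(b,\Delta^{ln}))|=|\mathrm{Col}_p(\hat b)|=p^k$), and your first ingredient is a re-derivation of Iwakiri's bound, which the paper simply quotes as Theorem \ref{thm:0516-01}. One small point to fix in the even-$m$ case: since $l=p$ there, you need $A_\Delta$ itself (not merely $A_{\Delta^2}$) to have order dividing $p$ --- which it does, as it acts by the translation $x\mapsto x+2(x_m-x_{m-1}+\cdots-x_1)$ --- because $\Delta^{pn}$ must act trivially for odd $n$ as well.
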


In some cases, we can determine the unknotting numbers. 
Let $\sigma_1, \ldots, \sigma_{m-1}$ be the standard generators of the $m$-braid group.
\begin{thm}\label{thm:=}
Let $b$ be an $m$-braid presented by an element of the group generated by $\sigma_i^p$ ($i =1,\ldots,m-1)$ such that $\hat{b}$ is a knot.
 Then, for any integer $n$,
\[
u(\mathcal{S}_m(b, \Delta^{ln})) =m-1.
\]
\end{thm}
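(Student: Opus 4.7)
The plan is to sandwich $u(\mathcal{S}_m(b,\Delta^{ln}))$ between the upper bound supplied by Theorem \ref{thm:0503-01} and the lower bound supplied by Theorem \ref{thm:unknotting}. The upper bound $u(\mathcal{S}_m(b,\Delta^{ln})) \leq m-1$ is immediate from Theorem \ref{thm:0503-01} applied with the exponent $ln$ in place of $n$. So the whole game is to verify that the lower bound in Theorem \ref{thm:unknotting} equals $m-1$, i.e.\ that $|\mathrm{Col}_p(\hat{b})| = p^m$.

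To show $|\mathrm{Col}_p(\hat{b})| = p^m$, I would argue that $b$ acts as the identity on the space of assignments of elements of $R_p$ to the top strands, so that \emph{every} tuple $(a_1,\dots,a_m) \in R_p^m$ extends to a $p$-coloring of $\hat{b}$. It suffices to check this for a single generator $\sigma_i^{\,p}$ of the subgroup in which $b$ lies, and then the statement for arbitrary words (including inverses) follows by composition. The action of $\sigma_i$ on the pair of colors $(a_i,a_{i+1})$ carried by the two strands it crosses is given, in $R_p$, by an affine/linear map on $(\mathbb{Z}/p\mathbb{Z})^2$ whose linear part has the form $I+N$ with $N^2 = 0$ (concretely, the map sends $(a,b)$ to $(b,2b-a)$, whose matrix has characteristic polynomial $(x-1)^2$ and is therefore unipotent). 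Expanding $(I+N)^p$ via the binomial theorem, all intermediate terms carry a factor of $p$ and $N^k = 0$ for $k \geq 2$, so $(I+N)^p = I + pN \equiv I \pmod p$. Hence $\sigma_i^{\,p}$ acts trivially on $R_p$-colorings, and so does $b$.

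Once the trivial action of $b$ is established, every assignment of the $m$ colors to the top of the braid produces a consistent coloring after closure, giving $|\mathrm{Col}_p(\hat{b})| = p^m$, i.e.\ $k = m$ in the notation of Theorem \ref{thm:unknotting}. Plugging into Theorem \ref{thm:unknotting} yields $u(\mathcal{S}_m(b,\Delta^{ln})) \geq m-1$, matching the upper bound and giving equality.

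The only potentially delicate point is pinning down the correct normalization of the $\sigma_i$-action on colors so that the unipotent computation goes through for both signs of exponents; this is purely a matter of matching the convention used in Section \ref{sec:4} for $\mathrm{Col}_p(\hat{b})$. Apart from that bookkeeping, the argument is a direct combination of the two previous theorems with the observation that $\sigma_i^{\,p}$ is color-trivial.
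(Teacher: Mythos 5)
Your proposal is correct and follows essentially the same route as the paper: combine the upper bound of Theorem \ref{thm:0503-01} with the lower bound of Theorem \ref{thm:unknotting}, the key point being that $A_{\sigma_i}^p=I$ on $R_p$-colorings (your unipotent computation $(I+N)^p=I+pN\equiv I$ is exactly the justification the paper leaves implicit), hence $A_b=I$ and $|\mathrm{Col}_p(\hat{b})|=p^m$.
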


Let us consider the shadow quandle cocycle invariant of $\hat{b}$ associated with Mochizuki's 3-cocycle of $R_p$, and denote it by $\Psi_p^*(\hat{b})$; see Section \ref{sec:4}. We regard $\Psi_p^*(\hat{b})$ as a multi-set consisting of elements of $\mathbb{Z}/p\mathbb{Z}$, where repetitions of the same elements are allowed. Let us denote by 
$a_0(\Psi_p^*(\hat{b}))$ the number of $0$ in the multi-set $\Psi_p^*(\hat{b})$ i.e. the number of $p$-colorings which contribute $0$ in $\Psi_p^*(\hat{b})$, where $0$ denotes the identity element of $\mathbb{Z}/p\mathbb{Z}$.  

\begin{thm}\label{thm:0510-3}
Let $m$ be a positive odd integer with $m \not \equiv 0 \pmod{p}$. 
Let $b$ be an $m$-braid.
Let $k$ be a positive integer such that 
$|Col_p(\hat{b})|=p^k$, and let $k^\prime$ be a positive integer such that 
$a_0(\Psi^*(\hat{b}))<p^{k^\prime}$.
Then, for an integer $n$ with $n \not \equiv 0 \pmod{p}$, 
\[
\tau (\mathcal{S}_m(b, \Delta^{2n})) \geq k-k^\prime+2.
\]
\end{thm}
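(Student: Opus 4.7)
The plan is to apply Iwakiri's \cite{Iwakiri} lower bound for triple point cancelling numbers, which in the form needed here says $\tau(F) \geq \log_p |\mathrm{Col}_p(F)| - K'$ whenever $a_0(\Psi_p^*(F)) < p^{K'}$. This rests on two facts: each 1-handle addition alters $\log_p|\mathrm{Col}_p(\cdot)|$ and $\log_p a_0(\Psi_p^*(\cdot))$ each by at most one, and a pseudo-ribbon surface tautologically satisfies $a_0 = |\mathrm{Col}_p|$ because its diagram has no triple points. Applied to $F = \mathcal{S}_m(b, \Delta^{2n})$, the theorem reduces to two computations: determining $|\mathrm{Col}_p(\mathcal{S}_m(b, \Delta^{2n}))|$ and bounding $a_0(\Psi_p^*(\mathcal{S}_m(b, \Delta^{2n})))$ in terms of $k$ and $k'$.

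For the first computation, I would analyze the natural chart diagram of $\mathcal{S}_m(b, \Delta^{2n})$ on the standard torus $T$ and show $|\mathrm{Col}_p(\mathcal{S}_m(b, \Delta^{2n}))| = p^{k+2}$. A $p$-coloring of this diagram is determined by its restriction to a meridional closure (giving a $p^k$ factor from $\hat{b}$) together with two additional shadow degrees of freedom coming from the two independent cycles of $T$. Under the hypothesis $m$ odd with $m \not\equiv 0 \pmod{p}$, the longitudinal compatibility condition imposed by the full twist $\Delta^{2n}$ is automatic, since the induced action of $\Delta^{2n}$ on $R_p$-colorings of an $m$-braid is trivial in this regime.

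For the second computation, I would explicitly evaluate the shadow Mochizuki cocycle invariant $\Psi_p^*(\mathcal{S}_m(b, \Delta^{2n}))$. The triple points of the standard diagram arise from the crossings of $b$ interacting with the $2n$ full twists; organizing these systematically and summing the Mochizuki $3$-cocycle contributions should produce a formula of the form $\Psi_p^*(\tilde{c}) = c_n \cdot \Phi(c)$, where $c_n \in \mathbb{Z}/p\mathbb{Z}$ depends only on $n$ and $\Phi(c)$ is the Mochizuki contribution of the underlying shadow $p$-coloring $c$ of $\hat{b}$ to $\Psi_p^*(\hat{b})$. Since $n \not\equiv 0 \pmod{p}$, the coefficient $c_n$ is a unit, so a shadow $p$-coloring $\tilde{c}$ contributes $0$ to $\Psi_p^*(\mathcal{S}_m(b, \Delta^{2n}))$ iff its underlying $\hat{b}$-shadow coloring contributes $0$ to $\Psi_p^*(\hat{b})$. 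Together with the observation that the two extra shadow dimensions from $T$ do not inflate the $0$-count (they are fixed by the cocycle equations, not free), this will yield $a_0(\Psi_p^*(\mathcal{S}_m(b, \Delta^{2n}))) < p^{k'}$.

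Combining the two computations via Iwakiri's bound yields $\tau(\mathcal{S}_m(b, \Delta^{2n})) \geq (k+2) - k' = k - k' + 2$. The main obstacle is the explicit cocycle calculation in the second step: the combinatorial bookkeeping of triple points across the $2n$ full twists requires care, and the truly delicate point is verifying that the two extra shadow dimensions from the torus do not create additional $0$-contributions, so that the bound on $a_0$ remains at $p^{k'}$ rather than $p^{k'+2}$.
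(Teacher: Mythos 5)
Your overall strategy---reduce to Iwakiri's bound and express the Mochizuki cocycle invariant of $\mathcal{S}_m(b,\Delta^{2n})$ as a unit multiple of the shadow invariant of $\hat b$---is the paper's strategy, and your second computation is essentially Theorem \ref{thm:0505-02}: $\Phi_p(\mathcal{S}_m(b,\Delta^{2n}))=\{-2mn\,\Psi_p^*(\hat b;C,0)\mid C\in\mathrm{Col}_p(\hat b)\}$ with $-2mn$ a unit mod $p$. But your accounting of the ``$+2$'' contains a genuine error, and the final answer only comes out right because two mistakes cancel. First, $|\mathrm{Col}_p(\mathcal{S}_m(b,\Delta^{2n}))|=p^k$, not $p^{k+2}$: a $p$-coloring of the broken-sheet diagram is an ordinary (not shadow) coloring, determined by the colors $(x_1,\ldots,x_m)$ of the sheets over a base point subject to invariance under $A_b$ and $A_{\Delta^{2n}}$, and the latter condition is vacuous since $A_\Delta^2=I$ for odd $m$; so the colorings of the surface biject with $\mathrm{Col}_p(\hat b)$. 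There are no ``two extra shadow degrees of freedom from the cycles of $T$''---regions of $\mathbb{R}^3$ are never colored in the invariant $\Phi_p(F)$ that Iwakiri's theorem uses, which is indexed by $C\in\mathrm{Col}_p(F)$ alone. Second, you quote Iwakiri's bound without its ``$+1$'': as stated in Theorem \ref{thm:0516-02}, $\tau(F)\geq k-k'+1$.

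The correct source of the ``$+2$'' is as follows. With $|\mathrm{Col}_p(F)|=p^k$, one unit comes from the ``$+1$'' in Theorem \ref{thm:0516-02}, and the other comes from the observation that $\Psi_p^*(\hat b)$ is a multiset indexed by pairs $(C,x)$ with $x\in R_p$ and, since $R_p$ is connected (Lemma \ref{lem:0509-1}(1)), it consists of $p$ copies of the multiset $\{\Psi_p^*(\hat b;C,0)\mid C\}$; hence $a_0(\Phi_p(F))=a_0(\Psi_p^*(\hat b))/p<p^{k'-1}$, and Iwakiri's bound gives $\tau(F)\geq k-(k'-1)+1=k-k'+2$. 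Your ``delicate point'' about whether the two extra shadow dimensions inflate the zero-count is an artifact of the erroneous setup and should be discarded; the only delicate points actually needed are $A_\Delta^2=I$ for odd $m$ (this uses only oddness, not $m\not\equiv 0\pmod p$, which is needed solely to make $-2mn$ a unit) and the $p$-fold redundancy of the shadow invariant just described.
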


In some cases, we obtain a better estimate (see Remark \ref{rem:1}).
For an $m$-braid $b$ as in Theorem \ref{thm:=}, 
let $\nu_i$ ($i=1,\ldots,m-1$) be the image of the presentation of $b$ by the homomorphism $f: \langle \sigma_1^p, \ldots, \sigma_{m-1}^p \rangle \to \mathbb{Z}/p\mathbb{Z}$ defined by $f(\sigma_j^p)=1 \pmod{p}$ if $j=i$ and otherwise zero, where $j \in \{1,\ldots,m-1\}$.
\begin{thm}\label{thm:0510-1}
Let $m$ be a positive odd integer with $m \not\equiv 0 \pmod{p}$. 
Let $b$ be an $m$-braid as in Theorem \ref{thm:=} such that $\nu_i \in \{x^2 \mid x \in \mathbb{Z}/p\mathbb{Z}\}-\{0\}$ ($i =1,\ldots,m-1$). 
Then, for an integer $n$ with $n \not\equiv 0 \pmod{p}$, 
\[
\tau(\mathcal{S}_m(b, \Delta^{2n})) \geq \frac{m-1}{2}.
\]
\end{thm}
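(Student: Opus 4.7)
The plan is to apply Theorem~\ref{thm:0510-3} after computing $|\mathrm{Col}_p(\hat b)|$ and bounding $a_0(\Psi_p^*(\hat b))$ sharply under the squares hypothesis on the $\nu_i$.

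First, since $b$ lies in $\langle \sigma_1^p, \ldots, \sigma_{m-1}^p\rangle$, each $\sigma_i^p$ acts trivially on the $m$-tuple of strand colors of a $p$-coloring. Indeed, the action of $\sigma_i$ on the pair $(x,y)$ at positions $i, i+1$ is the unipotent linear map $I+N$ with $N^2=0$, so $(I+N)^p = I + pN = I$ modulo $p$. Therefore every $(c_1,\ldots,c_m) \in (\mathbb{Z}/p\mathbb{Z})^m$ extends to a $p$-coloring of $\hat b$, giving $|\mathrm{Col}_p(\hat b)| = p^m$ and $k = m$.

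Next, I would compute the shadow cocycle contribution $\Phi(\mathbf c, w)$ of a coloring with strand colors $\mathbf c = (c_1,\ldots,c_m)$ and base region color $w$, using Mochizuki's $3$-cocycle of $R_p$. Summing the Mochizuki contributions over the $p$ crossings of each $\sigma_i^p$ factor should yield, after telescoping and the vanishing of terms divisible by $p$, a polynomial $D_i(c_i, c_{i+1}, w)$ quadratic in $c_{i+1} - c_i$; since the various $\sigma_i^p$ factors appear in $b$ with total exponent $\nu_i$ and commute modulo a quantity that vanishes in the Mochizuki cocycle mod $p$, the contribution of $b$ is $\sum_i \nu_i D_i(c_i, c_{i+1}, w)$, and the full twist $\Delta^{2n}$ adds a term $2n\, C(\mathbf c, w)$. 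Hence
\[
\Phi(\mathbf c, w) \;=\; 2n\, C(\mathbf c, w) + \sum_{i=1}^{m-1} \nu_i \, D_i(c_i, c_{i+1}, w).
\]

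The key step is to exploit $\nu_i = \alpha_i^2$ with $\alpha_i \neq 0$: the change of variable $d_i := \alpha_i (c_{i+1}-c_i)$ turns the second sum into a quadratic form in $d_1, \ldots, d_{m-1}$ with coefficients independent of the $\nu_i$. Combined with the nonvanishing coefficient $2n \not\equiv 0 \pmod p$ of $C$ and the assumption $m \not\equiv 0 \pmod p$, which together force nondegeneracy in the remaining variable $w$, a standard zero-count for nondegenerate quadratic forms over $\mathbb{Z}/p\mathbb{Z}$ bounds the number of shadow colorings with $\Phi(\mathbf c, w) = 0$ by a quantity strictly less than $p^{(m+5)/2}$. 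Taking $k' = (m+5)/2$ in Theorem~\ref{thm:0510-3} then gives
\[
\tau(\mathcal{S}_m(b, \Delta^{2n})) \;\geq\; k - k' + 2 \;=\; m - \tfrac{m+5}{2} + 2 \;=\; \frac{m-1}{2}.
\]

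The main obstacle will be the explicit computation of the polynomials $D_i$ and $C$ from Mochizuki's cocycle and verifying that, after the rescaling by the $\alpha_i$, the resulting form is nondegenerate of the expected rank; once this is in place, the counting and the invocation of Theorem~\ref{thm:0510-3} are routine.
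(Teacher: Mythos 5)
Your first step ($|\mathrm{Col}_p(\hat b)|=p^m$, so $k=m$) is fine, and your identification of the invariant with the quadratic form $\sum_i\nu_i y_i^2$ in the differences $y_i=c_{i+1}-c_i$ matches Theorem \ref{thm:0505-03}. The gap is in the counting step. The zero locus of a nondegenerate quadratic form in $N=m-1$ variables over $\mathbb{Z}/p\mathbb{Z}$ is a hypersurface: it has $p^{N-1}+\eta(p-1)p^{N/2-1}$ points ($\eta=0,\pm1$), i.e.\ on the order of $p^{m-2}$, not $p^{(m-1)/2}$. You appear to have conflated the size of the full quadric cone with the dimension of a maximal totally isotropic subspace. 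Consequently $a_0(\Psi_p^*(\hat b))$ is on the order of $p\cdot p\cdot p^{m-2}=p^{m}$, the smallest admissible $k'$ is $m$ or $m+1$, and Theorem \ref{thm:0510-3} yields only $\tau\geq 1$ or $2$ — never $(m-1)/2$. The paper makes exactly this point in Remark \ref{rem:1}: for $m=5$, $p=3$, $b=\sigma_1^3\sigma_2^3\sigma_3^3\sigma_4^3$ one computes $a_0(\Psi_3^*(\hat b))=3^2\cdot 33>3^5$, so Theorem \ref{thm:0510-3} gives only $\tau\geq 1$ while Theorem \ref{thm:0510-1} gives $\tau\geq 2$. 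So the route through Theorem \ref{thm:0510-3} cannot work; your claimed bound $a_0<p^{(m+5)/2}$ already fails at $m=5$.

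The actual proof has to exploit more than the count of zeros: it uses that each 1-handle addition imposes one \emph{linear} relation on $(y_1,\dots,y_{m-1})$ (Lemma \ref{lem:0518-1}), so that if the result of $k$ handle additions were pseudo-ribbon, the form $\sum_i\nu_iy_i^2$ would have to vanish \emph{identically on a linear subspace} of dimension at least $m-1-k$, i.e.\ that subspace would be totally isotropic. Writing the subspace as a graph $(y_1,\dots,y_{k'})=A(y_{k'+1},\dots,y_{m-1})$ and rescaling the rows of $A$ by $\sqrt{\nu_i}$ (this is where the hypothesis $\nu_i\in\{x^2\}-\{0\}$ enters), the vanishing forces the $m-1-k'$ columns of the rescaled matrix to be mutually orthogonal nonzero vectors in $(\mathbb{Z}/p\mathbb{Z})^{k'}$, whence $m-1-k'\leq k'$ and $k\geq k'\geq (m-1)/2$. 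Your rescaling idea $d_i=\alpha_i y_i$ is the right ingredient, but it must be fed into this totally-isotropic-subspace argument rather than into a point count.
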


Let $g: (\mathbb{Z}/p\mathbb{Z})^{m-1} \to \mathbb{Z}/p\mathbb{Z}$ be a map defined by 
$g(y_1, \ldots,y_{m-1})=\sum_{i=1}^{m-1} \nu_i y_i^2$. Let $U_j$ be the set of elements of $(\mathbb{Z}/p\mathbb{Z})^{m-1}$ with exactly $j$ non-zero entries ($j=1,\ldots,m-1$). Let $p^\prime$ be the minimum number of $j \in \{1,\ldots,m-1\}$ satisfying  $0 \in g(U_j)$; note that for $m \geq 4$, if $\nu_i \neq 0 \pmod{p}$ $(i=1,\ldots,m-1$), then $p^\prime=2$ or $3$; see Lemma \ref{lem:0502-2}.

\begin{thm}\label{cor:0510-2}
Let $p$ be an odd prime greater than three. 
Let $b$ be a $3$-braid as in Theorem \ref{thm:=} such that $\nu_i \neq 0$ ($i =1,2$).  
If $p^\prime \neq 2$, then, for an integer $n$ with $n \not\equiv 0 \pmod{p}$,  
\[
\tau(\mathcal{S}_3(b, \Delta^{2n}))=u(\mathcal{S}_3(b, \Delta^{2n}))=2.
\]
\end{thm}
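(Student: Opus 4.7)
The upper bound $u(\mathcal{S}_3(b,\Delta^{2n})) \leq m-1 = 2$ is immediate from Theorem~\ref{thm:0503-01} (applied with $m=3$), and since $\tau \leq u$ in general we also get $\tau \leq 2$. So the task reduces to establishing the matching lower bound $\tau \geq 2$.

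For this I plan to invoke Theorem~\ref{thm:0510-3} with a sharper choice of $k'$ than the one implicit in Theorem~\ref{thm:0510-1}. Since $b$ lies in $\langle \sigma_1^p, \sigma_2^p \rangle$, each factor $\sigma_i^p$ acts trivially on $R_p$-colorings of the braid strands: the relevant $2 \times 2$ matrix over $\mathbb{F}_p$ is the identity plus a nilpotent of square zero, so its $p$-th power is the identity. Hence every 3-tuple $(x_1, x_2, x_3) \in R_p^3$ extends to a $p$-coloring of $\hat{b}$, giving $|\mathrm{Col}_p(\hat{b})| = p^3$ and $k = 3$. I then parametrize the shadow $p$-colorings of $\hat{b}$ by $(x, y_1, y_2, w)$, where $x$ is a base arc color, $y_i = x_{i+1} - x_i$ are the inter-strand differences, and $w$ is a region color (for a total of $p^4$ shadow colorings), and argue---by repeating the local Mochizuki-cocycle calculation that underlies the proof of Theorem~\ref{thm:0510-1}---that the contribution of this shadow coloring to $\Psi^*(\hat{b})$ equals $g(y_1, y_2) = \nu_1 y_1^2 + \nu_2 y_2^2$. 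The hypothesis $\nu_1, \nu_2 \neq 0$ excludes any zero of $g$ in $U_1$, and $p' \neq 2$ excludes any zero in $U_2$; since for $m = 3$ these are the only possibilities, we obtain $g^{-1}(0) = \{(0,0)\}$. Hence exactly $p \cdot p = p^2$ shadow colorings contribute zero (free choice of $x$ and $w$), giving $a_0(\Psi^*(\hat{b})) = p^2 < p^3$, so $k' = 3$ is admissible.

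Plugging $k = k' = 3$ into Theorem~\ref{thm:0510-3} yields $\tau(\mathcal{S}_3(b,\Delta^{2n})) \geq k - k' + 2 = 2$, which combined with the upper bound forces $\tau = u = 2$. The main technical step is the local shadow-cocycle calculation identifying the contribution with $g(y_1, y_2)$; this is essentially the same computation used (with the weaker choice $k' = 4$) in the proofs of Theorems~\ref{thm:0510-3} and~\ref{thm:0510-1}, so I expect it to carry over directly. The entire improvement over Theorem~\ref{thm:0510-1} in the $m = 3$ setting comes from the hypothesis $p' \neq 2$: when $-\nu_1/\nu_2$ is a square mod~$p$, one would instead have $|g^{-1}(0)| = 2p - 1$ and $a_0 = p^2(2p - 1) \geq p^3$, forcing $k' \geq 4$ and therefore only $\tau \geq 1$.
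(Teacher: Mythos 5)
Your argument is correct, and it reaches the lower bound by a different mechanism than the paper. For $\tau\geq 2$ the paper does not invoke Theorem \ref{thm:0510-3} at all: it takes $\Phi_p(\mathcal{S}_3(b,\Delta^{2n}))$ as computed in Theorem \ref{thm:0505-03} (copies of $6n(\nu_1y_1^2+\nu_2y_2^2)$), assumes $\tau\leq 1$, and re-runs the linear-algebra step of Theorem \ref{thm:0510-1} (via Lemma \ref{lem:0518-1}): one $1$-handle imposes one linear relation $a_1y_1+a_2y_2=0$, on whose solution set the quadratic form would have to vanish if the result were pseudo-ribbon; since $p'\neq 2$ and $\nu_1,\nu_2\neq 0$ force $\nu_1y_1^2+\nu_2y_2^2=0\Rightarrow y_1=y_2=0$, a single relation cannot suffice, a contradiction. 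You instead feed the same quadratic-form computation into the counting bound of Theorem \ref{thm:0510-3}: your values $k=3$ (from $A_{\sigma_i}^p=I$, which is exactly the paper's justification in Theorem \ref{thm:=}) and $a_0(\Psi_p^*(\hat b))=p\cdot p\cdot|g^{-1}(0)|=p^2<p^3$, hence $k'=3$, give $\tau\geq k-k'+2=2$. Both routes rest on the same two inputs --- the Asami--Satoh evaluation $\Psi_p^*(\hat b;C,0)=-\sum\nu_i y_i^2$ and the fact that $p'\neq 2$ leaves only the trivial zero --- and for $m=3$ they yield the same bound; your version is the more modular one (a direct application of a stated theorem), while the paper's rank/orthogonality argument is the one that scales to larger $m$, where the crude count $a_0$ is too coarse (cf.\ Remark \ref{rem:1}). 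One cosmetic point: your aside attributing ``the weaker choice $k'=4$'' to the proof of Theorem \ref{thm:0510-1} is not quite right --- that proof does not use $k'$ at all --- but this does not affect your argument. The upper bound and the final assembly ($u\leq 2$ from Theorem \ref{thm:0503-01}, $\tau\leq u$, so $\tau=u=2$) agree with the paper, which cites Theorem \ref{thm:=} for $u=2$.
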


\begin{cor}\label{cor:0122-01}
Let $p>3$ be a prime congruent to $3 \pmod{4}$. 
Let $b$ be a $3$-braid as in Theorem \ref{thm:0510-1}.  
Then, for an integer $n$ with $n \not\equiv 0 \pmod{p}$,  
\[
\tau(\mathcal{S}_3(b, \Delta^{2n}))=u(\mathcal{S}_3(b, \Delta^{2n}))=2.
\]
\end{cor}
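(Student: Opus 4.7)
The plan is to deduce the corollary directly from Theorem \ref{cor:0510-2} by verifying its one remaining hypothesis, namely $p^\prime \neq 2$, under the stronger assumption $p \equiv 3 \pmod{4}$.

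First I would confirm that the other hypotheses of Theorem \ref{cor:0510-2} are automatic. Since $b$ is a $3$-braid as in Theorem \ref{thm:0510-1}, it is in particular of the form required in Theorem \ref{thm:=}, and each $\nu_i$ lies in $\{x^2 \mid x \in \mathbb{Z}/p\mathbb{Z}\}-\{0\}$; in particular $\nu_1,\nu_2 \neq 0$. The hypotheses $p>3$ prime and $n \not\equiv 0 \pmod{p}$ are also part of the assumption.

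The key step is to show $p^\prime \neq 2$. For $m=3$, having $p^\prime = 2$ would mean that there exist $y_1,y_2 \in \mathbb{Z}/p\mathbb{Z}$, both non-zero, with
\[
\nu_1 y_1^2 + \nu_2 y_2^2 \equiv 0 \pmod{p}.
\]
Writing $\nu_i = \mu_i^2$ with $\mu_i \in \mathbb{Z}/p\mathbb{Z}\setminus\{0\}$, this becomes $(\mu_1 y_1)^2 \equiv -(\mu_2 y_2)^2 \pmod{p}$, i.e. $\bigl(\mu_1 y_1/(\mu_2 y_2)\bigr)^2 \equiv -1 \pmod{p}$. Such a solution exists if and only if $-1$ is a quadratic residue mod $p$, that is, iff $p \equiv 1 \pmod{4}$. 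The hypothesis $p \equiv 3 \pmod{4}$ therefore excludes this, so $0 \notin g(U_2)$ and hence $p^\prime \neq 2$.

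With every hypothesis of Theorem \ref{cor:0510-2} now in place, the equalities $\tau(\mathcal{S}_3(b, \Delta^{2n}))=u(\mathcal{S}_3(b, \Delta^{2n}))=2$ follow at once. There is no real obstacle here: the corollary is essentially the observation that the arithmetic condition $p^\prime \neq 2$ is automatic as soon as one imposes the classical condition $p \equiv 3 \pmod{4}$ under which $-1$ is a non-residue modulo $p$.
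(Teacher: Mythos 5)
Your proposal is correct and follows essentially the same route as the paper: both reduce the corollary to showing that $\nu_1 y_1^2+\nu_2 y_2^2\equiv 0\pmod p$ has no solution with $y_1,y_2$ both non-zero, absorb the squares $\nu_i=\mu_i^2$ into the variables, and conclude from the fact that $-1$ is a quadratic non-residue modulo a prime $p\equiv 3\pmod 4$. The only cosmetic difference is that you package this as the verification of the hypothesis $p^\prime\neq 2$ of Theorem \ref{cor:0510-2}, whereas the paper phrases it as re-running that theorem's proof.
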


This paper is organized as follows. 
In Section \ref{section1}, we review torus-covering links. 
In Section \ref{sec:3}, we review the notion of an $m$-chart on a standard torus $T$ presenting a torus-covering link. 
In Section \ref{unknotting-number}, we prove Theorem \ref{thm:0503-01} by using $m$-charts on $T$. 
In Section \ref{sec:4}, we review quandle colorings and quandle cocycle invariants.
In Section \ref{sec:6}, we prove Theorems \ref{thm:unknotting} and \ref{thm:=}.
In Section \ref{sec:7}, we calculate quandle cocycle invariants associated with Mochizuki's 3-cocycle and we show Theorems \ref{thm:0510-3}, \ref{thm:0510-1} and \ref{cor:0510-2}, and Corollary \ref{cor:0122-01}.
\section{Torus-covering links} \label{section1}
%
Let $T$ (resp. $S^2$) be a standard torus (resp. 2-sphere), i.e. the boundary of an unknotted solid torus (resp. a 3-ball) in $\mathbb{R}^3 \times \{0\} \subset \mathbb{R}^4$.
 It is known \cite{Kamada94, Kamada02} that any surface link can be presented in the form of the closure of a surface braid. We can modify the terms as follows: Any surface link can be presented in the form of a braided surface over $S^2$ \cite{Kamada92, N, Rudolph}. 
Torus-covering links were introduced in \cite{N} by considering the standard torus $T$ instead of the standard 2-sphere $S^2$.
In this section, we review the definition of a torus-covering link by using the notion of a braided surface over $T$, which is a modified notion of a braided surface over a 2-disk \cite{Kamada92, Rudolph}; see \cite{N}. A torus-covering $T^2$-link is uniquely determined from a pair of commutative $m$-braids $a$ and $b$, which we call basis braids, and we denote by $\mathcal{S}_m(a,b)$ a torus-covering $T^2$-link with basis $m$-braids $a$ and $b$ \cite{N}.
  
We work in the smooth category, and we assume that embeddings are locally flat. 
A {\it surface link} is the image of a smooth embedding of a closed surface into 
$\mathbb{R}^4$. 
Two surface links are said to be {\it equivalent} if one is taken to the other 
by an orientation-preserving self-diffeomorphism of $\mathbb{R}^4$. Let $D^2$ be a 2-disk, and let $m$ be a positive integer. 
  
\begin{df}
A closed surface $F$ embedded in $D^2 \times T$ is called a {\it braided surface over $T$} of degree $m$ if $\pi |_{F} \,:\, F \rightarrow T$ is a branched covering map of degree $m$, where $\pi \,:\, D^2 \times T\to T$ is the projection to the second factor. 
A braided surface $F$ is called \textit{simple} if $\#(F \cap \pi^{-1}(x))=m-1$ or $m$ for each $x \in T$. 
Take a base point $x_0$ of $T$. 
Two braided surfaces over $T$ of degree $m$ are {\it equivalent} if there is a fiber-preserving ambient isotopy of $D^2 \times T$ rel $\pi^{-1}(x_0)$ which carries one to the other. 
\end{df}

Let $N(T)$ be a tubular neighborhood of $T$ in $\mathbb{R}^4$. We identify $N(T)$ with $D^2 \times T$. 
\begin{df} \label{Def2-1}
A {\it torus-covering link} is a surface link in $\mathbb{R}^4$ 
presented by a simple braided surface over $T$, where we regard the braided surface as in 
$N(T) \subset \mathbb{R}^4$. 
 \end{df}
 
A {\it torus-covering $T^2$-link} is a torus-covering link each of whose components is of genus one. 
Let us consider a torus-covering $T^2$-link $F$. Let us fix a point $x_0$ of $T$, and take a meridian $\mu$ and a longitude $\lambda$ of $T$ with the base point $x_0$. 
A {\it meridian} is an oriented simple closed curve on $T$ which bounds the 2-disk of the solid torus whose boundary is $T$ and which is not null-homologous in $T$. A {\it longitude} is an oriented simple closed curve on $T$ which is null-homologous in the complement of the solid torus in the three space $\mathbb{R}^3 \times \{0\}$ and which is not null-homologous in $T$. Since $F$ is an unbranched cover over $T$, the intersections $F \cap \pi^{-1}(\mu)$ and $F \cap \pi^{-1}(\lambda)$ are closures of classical braids. 
Cutting open the solid tori at the 2-disk $\pi^{-1}(x_0)$, we obtain a pair of classical braids. We call them {\it basis braids} \cite{N}. The basis braids of a torus-covering $T^2$-link are commutative, and for any commutative $m$-braids $a$ and $b$, there exists a unique torus-covering $T^2$-link with basis braids $a$ and $b$ \cite{N}. For commutative $m$-braids $a$ and $b$, we denote by $\mathcal{S}_m(a,b)$ the torus-covering $T^2$-link with basis $m$-braids $a$ and $b$ \cite{N}. 
   
\section{Graphs called $m$-Charts on $T$}\label{sec:3}
The notion of an $m$-chart on a 2-disk was introduced in \cite{Kamada92, Kamada02} to present a simple surface braid. 
By regarding an $m$-chart on a 2-disk as drawn on $S^2$, it presents a simple braided surface over $S^2$ \cite{Kamada92, Kamada02, N}. 
Two simple braided surfaces over $S^2$ of the same degree are equivalent if and only if the presenting $m$-charts are C-move equivalent \cite{Kamada92, Kamada96, Kamada02}.
These notions can be modified for $m$-charts on a standard torus $T$ \cite{N}. An $m$-chart on $T$ presents a torus-covering link \cite{N}. 
 \\
 
We identify $D^2$ with $I \times I$, where $I=[0,1]$. 
When a simple braided surface $F$ over $T$ is given, we obtain a graph on $T$, as follows. 
Consider the singular set $\mathrm{Sing}(\pi^\prime(F))$ of the image of $F$ by the projection $\pi^\prime$ to $I \times T$. Perturbing $F$ if necessary, we can assume that $\mathrm{Sing}(\pi^\prime(F))$ consists of double point curves, triple points, and branch points. Moreover we can assume that the singular set of the image of $\mathrm{Sing}(\pi^\prime(F))$ by the projection to $T$ consists of a finite number of double points such that the preimages belong to double point curves of $\mathrm{Sing}(\pi^\prime(F))$. 
Thus the image of $\mathrm{Sing}(\pi^\prime(F))$ by the projection to $T$ forms a finite graph $\Gamma$ on $T$ such that the degree of its vertex is either $1$, $4$ or $6$. 
An edge of $\Gamma$ corresponds to a double point curve, and a vertex of degree $1$ (resp. $6$) corresponds to a branch point (resp. a triple point). 

For such a graph $\Gamma$ obtained from a simple braided surface $F$, we give orientations and labels to the edges of $\Gamma$, as follows. 
Let us consider a path $\rho$ in $T$ such that $\Gamma \cap \rho$ is a point $P$ of an edge $e$ of $\Gamma$. 
Then $F \cap \pi^{-1} (\rho)$ is a classical $m$-braid with one crossing in $\pi^{-1}(\rho)$ such that $P$ corresponds to the crossing of the $m$-braid. Let $\sigma_{i}^{\epsilon}$ ($i \in \{1,\ldots, m-1\}$, $\epsilon \in \{+1, -1\}$) be the presentation of $F \cap \pi^{-1}(\rho)$. 
Then label the edge $e$ by $i$, and moreover give $e$ an orientation such that the normal vector of $\rho$ coincides (resp. does not coincide) with the orientation of $e$ if $\epsilon=+1$ (resp. $-1$). We call such an oriented and labeled graph an {\it $m$-chart of $F$}. 
\\
 
 In general, we define an $m$-chart on $T$ as follows. 

\begin{df}
A finite graph $\Gamma$ on $T$ is called an {\it m-chart on $T$} if it satisfies the following conditions: 

\begin{enumerate}
 \item Every edge is oriented and labeled by an element of $\{1,2, \ldots, m-1\}$. 
 \item Every vertex has degree $1$, $4$, or $6$.
 \item  The adjacent edges around each vertex are oriented and labeled as shown in Fig. \ref{Fig1-1}, where we depict a vertex of degree 1 by a black vertex, and a vertex of degree 6 by a white vertex, and we call a vertex of degree 4 a crossing.
 \end{enumerate}
 \end{df}
 
 \begin{figure}
 \includegraphics*{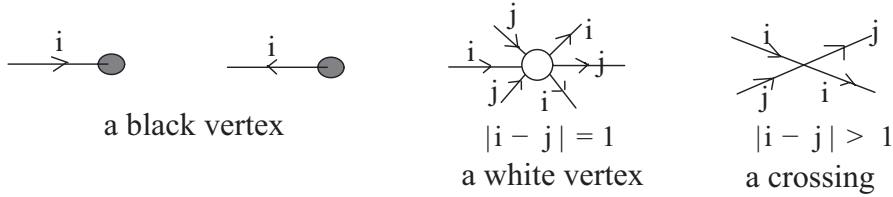}
 \caption{Vertices in an $m$-chart.}
 \label{Fig1-1}
 \end{figure}
  
 When an $m$-chart $\Gamma$ on $T$ is given, we can reconstruct a simple braided surface $F$ over $T$ such that the original $m$-chart $\Gamma$ is an $m$-chart of $F$; see \cite{Carter-Saito, Kamada92-2, Kamada02, N}. 
A black vertex (resp. a white vertex) of $\Gamma$ presents a branch point (resp. a triple point) of $F$. Let $\rho$ be an oriented path. If $\Gamma \cap \rho$ does not contain black vertices of $\Gamma$, then it presents an $m$-braid. 
A {\it singular $m$-braid} is an $m$-braid allowed to intersect transversely in finitely many double points called {\it singular points}. 
If $\Gamma \cap \rho$ consists of a black vertex of $\Gamma$, it presents a singular braid with one singular point and with no crossings. In this paper, we denote the singular braid $\Gamma \cap \rho$ by $\dot{\sigma_i}^{\epsilon}$ ($i \in \{1,\ldots,m-1\}, \epsilon\in\{+1,-1\}$), where $\sigma_i^{\epsilon}$ is the presentation of the $m$-braid presented by $\Gamma \cap \rho^\prime$, where $\rho^\prime$ is an oriented path parallel to $\rho$ and sufficiently near $\rho$ such that $\Gamma \cap \rho^\prime$ consists of an inner point of the edge extending from the black vertex. See \cite{Kamada92-2, Kamada02}. 
\\

Two $m$-charts on $T$ are {\it C-move equivalent} if they are related by a finite sequence of ambient isotopies of $T$ rel $\pi^{-1}(x_0)$ and CI, CII, CIII-moves shown in Fig. \ref{cmove} \cite{N}; see \cite{Kamada02} for the complete set of CI-moves. 
Two simple braided surfaces over $T$ of degree $m$ are equivalent if and only if $m$-charts of them are C-move equivalent \cite{N}; see also \cite{Kamada92, Kamada92-2, Kamada02} .  
Hence it follows that for two $m$-charts on $T$, their presenting torus-covering links are equivalent if the $m$-charts are C-move equivalent.  
\begin{figure}
 \includegraphics*{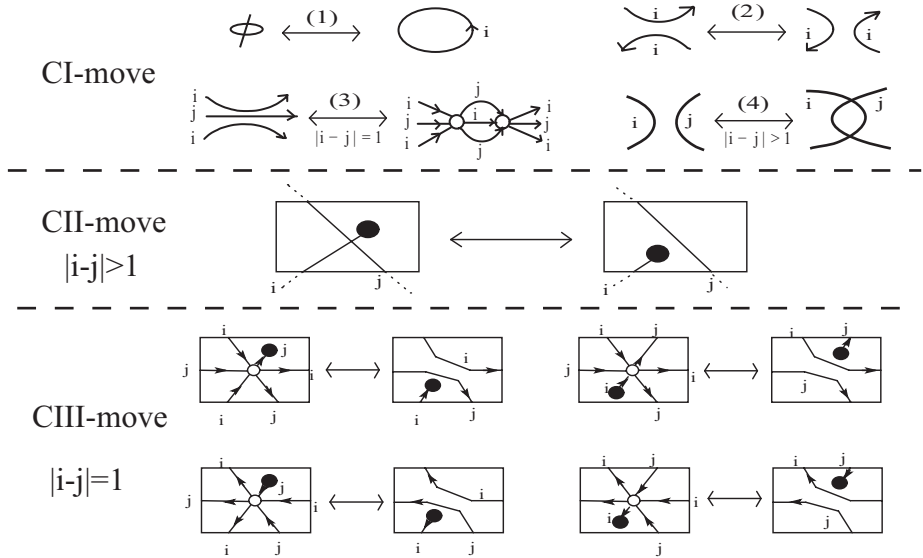}
\caption{CI, CII, CIII-moves. For CI-moves, we give only several examples.} 
\label{cmove}
 \end{figure} 

A torus-covering link is presented by an $m$-chart on $T$. In particular, a torus-covering $T^2$-link $\mathcal{S}_m(a,b)$ is presented by an $m$-chart on $T$ without black vertices \cite{N}. In this paper, we denote by $\Gamma_m(a,b)$ an $m$-chart as illustrated in Fig. \ref{2013-0121-01}, presenting $\mathcal{S}_m(a,b)$. 

\begin{figure}
 \includegraphics*{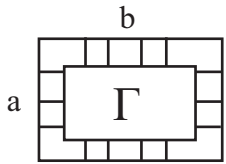}
\caption{An $m$-chart $\Gamma_m(a,b)$ presenting $\mathcal{S}_m(a,b)$, where $\Gamma$ contains no black vertices.} 
\label{2013-0121-01}
 \end{figure} 

In an $m$-chart, a {\it free edge} is an edge whose endpoints are a pair of black vertices \cite{Kamada02}. In this paper, we denote by $F_i$ a free edge labeled with $i$ ($i \in \{1,\ldots,m-1\}$).
For an $m$-chart $\Gamma$ on $T$, let $D_0$ be a disk in $T$ such that $D_0$ contains the base point $x_0$ and further $\Gamma \cap D_0=\emptyset$. 
Let $\cup_i F_i$ be a split union of free edges in $D_0$ and let us denote by $\Gamma \cup \cup_i F_i$ the union of $\Gamma$ in $T$ and $\cup_i F_i$ in $D_0$.

\section{Upper bounds} \label{unknotting-number}

%
For an $m$-chart $\Gamma$, we can consider a new $m$-chart obtained from  
inserting a free edge to $\Gamma$ in such a way that it is disjoint with $\Gamma$, which we call the $m$-chart obtained from $\Gamma$ by {\it insertion of a free edge} \cite{Kamada92-2}.
For a surface link $F$ and an $m$-chart $\Gamma$ presenting $F$,  insertion of a free edge to $\Gamma$ presents a 1-handle addition to $F$ \cite{Kamada92-2, Kamada02}. The notion of an unknotted $m$-chart was introduced in \cite{Kamada92} to present an unknotted surface link: An $m$-chart on $S^2$ is called {\it unknotted} if it consists of free edges, and an unknotted $m$-chart presents an unknotted surface link \cite{Kamada92}; here we regard an $m$-chart on a 2-disk as drawn on $S^2$.
By using these terms, Kamada \cite{Kamada99} investigated $m$-charts on $S^2$, and an upper bound of the unknotting number was given: For an $m$-chart $\Gamma$ on $S^2$, 
$u(\Gamma) \leq w(\Gamma) +m-1$, and hence it follows from $u(F) \leq u(\Gamma)$ that $u(F) \leq w(\Gamma) +m-1$, where $u(\Gamma)$ is the {\it unknotting number of $\Gamma$} i.e. the minimum number of free edges whose insertions are necessary to transform $\Gamma$ to be C-move equivalent to an unknotted $m$-chart, and $w(\Gamma)$ is the number of white vertices in $\Gamma$, and $F$ is the surface link presented by $\Gamma$ \cite{Kamada99}. 
In this section, we show similar results for $m$-charts on $T$ presenting torus-covering links.
We give an upper bound of the unknotting number of a torus-covering link, by using $m$-charts on $T$ and insertion of free edges. 

In Section \ref{sec:4-1}, we define an unknotted $m$-chart on $T$ and we give Theorem \ref{Prop5-5}, by using which we show Theorem \ref{thm:0503-01}. 
In Section \ref{sec:4-2}, we show Theorem \ref{Prop5-5}.

\subsection{Upper bounds of unknotting numbers}\label{sec:4-1}

 \begin{df}
We say an $m$-chart on $T$ is {\it unknotted} 
if it consists of free edges.
\end{df}

  \begin{lem} \label{0302-2}
An unknotted $m$-chart on $T$ presents a torus-covering link which is unknotted. 
  \end{lem}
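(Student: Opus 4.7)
The plan is to combine two standard observations. First, the empty $m$-chart on $T$ presents $m$ disjoint parallel copies of the standard torus in $\mathbb{R}^4$, which is an unknotted surface link. Second, insertion of a free edge to an $m$-chart corresponds to a $1$-handle addition to the presented surface link, as recalled just before this section. Hence the surface link presented by an unknotted $m$-chart $\Gamma$ is obtained from $m$ parallel unknotted tori by a sequence of $1$-handle additions, and the goal is to verify these additions can be chosen to keep the surface link unknotted.

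To carry this out, I would first apply ambient isotopies of $T$ rel $\pi^{-1}(x_0)$ (a type of C-move) to gather all free edges of $\Gamma$ into a small open disk $D_0 \subset T$ disjoint from $x_0$. This is always possible because free edges are pairwise disjoint simple arcs. Viewing $D_0$ as a $2$-disk sitting in an auxiliary $S^2$, the restriction $\Gamma|_{D_0}$ becomes an unknotted $m$-chart on $S^2$, so by Kamada's result \cite{Kamada92} the corresponding local surface link inside the $4$-ball $D^2 \times D_0$ is an unknotted surface link, bounding a disjoint union of handlebodies inside this $4$-ball.

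Finally I would assemble the global picture: gluing the trivial $m$-fold cover over $T \setminus D_0$ to the local unknotted surface link over $D_0$ along the trivial $m$-strand braid above $\partial D_0$ produces the target torus-covering link in $N(T) \subset \mathbb{R}^4$. Each component of this link bounds a handlebody in $\mathbb{R}^4$ obtained by extending nested solid tori over $T \setminus D_0$ to the handlebodies provided by the $S^2$-case inside $D^2 \times D_0$; after a small perturbation these handlebodies are pairwise disjoint, so the link is unknotted. The main obstacle is this bookkeeping: when several free edges of various labels connect different sheets into a single component of the final surface link, one must carefully track how the partial solid tori outside $D_0$ join up with the $S^2$-case handlebodies inside $D^2 \times D_0$ to yield an honest handlebody for each component. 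This combinatorial matching is the core non-trivial step, but it is guaranteed by the matching already verified in the proof of the $S^2$-case.
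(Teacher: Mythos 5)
Your overall strategy---reduce to Kamada's unknottedness result for charts on $S^2$ and then account for the rest of the torus---is the same idea the paper uses, but the way you cut the surface leaves two genuine gaps. You split $F$ along $\pi^{-1}(\partial D_0)$ into the braided piece over $D_0$ and the $m$ parallel punctured tori over $T\setminus D_0$, and you then need a \emph{relative} statement: that the braided surface over the disk $D_0$ presented by a union of free edges bounds a disjoint union of 3-manifolds inside $D^2\times D_0$ whose behaviour over $\partial D_0$ matches whatever partial solid tori you choose over $T\setminus D_0$. Kamada's theorem is a statement about \emph{closed} surface links presented by unknotted charts on $S^2$; it supplies handlebodies in $\mathbb{R}^4$ bounded by a closed surface, not boundary-controlled 3-manifolds in the 4-ball over $D_0$. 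So the ``matching already verified in the proof of the $S^2$-case'' that you appeal to does not exist; the step you yourself identify as the core non-trivial one is exactly the step left unproved. A second, independent problem is the sentence ``after a small perturbation these handlebodies are pairwise disjoint'': two 3-manifolds in $\mathbb{R}^4$ generically intersect along surfaces ($3+3-4=2$), so disjointness cannot be obtained by general position and must be built into the construction.

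The paper's proof avoids both issues by cutting the other way. It regards the same chart as a chart on $S^2$, obtaining a \emph{closed} unknotted surface link $F'$ to which the closed-case theorem applies directly, and observes that $F$ is recovered from $F'$ by attaching $m$ three-dimensional 1-handles $H=D'\times I$, one per sheet, each with both feet on a single trivial disk of $F'$, with $F'$ (and the disjoint handlebodies $K$ it bounds) placed in $\mathbb{R}^3\times(-\infty,0]$ and the handles in $\mathbb{R}^3\times[0,\infty)$. Then $K\cup H$ is visibly a disjoint union of handlebodies with boundary $F$; no relative version of the $S^2$-result and no general-position argument are needed. To salvage your route you would have to construct the bounding 3-manifolds over $D_0$ and over $T\setminus D_0$ explicitly and disjointly, which amounts to redoing the paper's argument in a less convenient decomposition.
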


\begin{proof}
Let $\Gamma$ be an unknotted $m$-chart on $T$.
Let $F$ be the torus-covering link presented by $\Gamma$. 
  Let $F^\prime$ be a surface link obtained by $\Gamma$ by regarding it as an $m$-chart on $S^2$. 
  Then $F$ is obtained from $F^\prime$ by additions of $m$ 1-handles in the following form:
\begin{enumerate}
\item
$F^\prime \subset \mathbb{R}^3 \times (\infty,0]$ such that $F\cap (\mathbb{R}^3 \times \{0\})$ consists of $m$ copies of a 2-disk $D$. 
\item
The 1-handles are $m$ copies of  $H=D^\prime \times I$ embedded in $\mathbb{R}^3 \times [0, \infty)$, where $D^\prime$ is a 2-disk and $I$ is an unknotted semicircle such that $I \cap (\mathbb{R}^3 \times \{0\})=\partial I$ and $H \cap F^\prime=D^\prime \times \partial I$ is a pair of 2-disks in $D$.
\end{enumerate}
  Since $F^\prime$ is unknotted \cite{Kamada92, Kamada02}, $F^\prime$ is the boundary of a disjoint union of handlebodies  which we denote by $K$. Since $K \cup H$ is also a disjoint union of handlebodies such that the boundary is $F$, it follows that $F$ is also unknotted. 
  \end{proof}

For an $m$-chart $\Gamma$ on $T$, let us call the minimum number of free edges necessary to transform $\Gamma$ by their insertion to be C-move equivalent to an unknotted $m$-chart the {\it unknotting number of an $m$-chart on $T$}, and denote it by $u(\Gamma)$.

  \begin{prop}\label{prop:1}
  For a torus-covering link $F$ presented by an $m$-chart $\Gamma$ on $T$, 
\[
u(F) \leq u(\Gamma). 
\]
  \end{prop}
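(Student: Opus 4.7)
The plan is to translate a minimizing sequence of free edge insertions on the chart side directly into a sequence of 1-handle additions on the surface link side, and then invoke Lemma \ref{0302-2} to conclude unknottedness. First, I would fix a realization of $u(\Gamma)$: by definition, there exist free edges $F_{i_1}, \ldots, F_{i_{u(\Gamma)}}$ contained in a disk $D_0 \subset T$ disjoint from $\Gamma$ such that the enlarged $m$-chart $\Gamma' = \Gamma \cup \bigcup_k F_{i_k}$ is C-move equivalent to some unknotted $m$-chart $\Gamma_0$ on $T$.

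Next, I would invoke the key correspondence, established in \cite{Kamada92-2, Kamada02} and recalled at the start of this section, that insertion of a free edge disjoint from $\Gamma$ corresponds, at the level of surface links, to a 1-handle addition on $F$. Applying this $u(\Gamma)$ times, one obtains a surface link $F'$ presented by $\Gamma'$ that arises from $F$ by exactly $u(\Gamma)$ successive 1-handle additions.

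Then I would appeal to the fact, cited in Section \ref{sec:3}, that C-move equivalent $m$-charts on $T$ present equivalent torus-covering links. Since $\Gamma'$ and $\Gamma_0$ are C-move equivalent, the surface link $F'$ they present is equivalent to the torus-covering link presented by $\Gamma_0$; by Lemma \ref{0302-2}, this link is unknotted. Consequently $F'$ is unknotted, and we have exhibited a sequence of $u(\Gamma)$ 1-handle additions that unknots $F$, whence $u(F) \leq u(\Gamma)$ by the definition of the unknotting number of a surface link.

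The argument has no real obstacle beyond the bookkeeping already set up in Section \ref{sec:3}; the only point to be careful about is the assertion that free edges inserted into $D_0$ are genuinely disjoint from $\Gamma$ so that each insertion is a legitimate 1-handle addition (rather than, say, a band surgery interacting with the double point set of the braided surface). This is guaranteed by the definition of insertion of a free edge and by the choice of $D_0$ as a disk in $T$ containing the base point $x_0$ and missing $\Gamma$, so no additional geometric argument is required.
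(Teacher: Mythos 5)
Your argument is correct and is essentially the paper's own proof, just written out in more detail: both rest on the facts that inserting a free edge presents a 1-handle addition, that C-move equivalent $m$-charts on $T$ present equivalent links, and that an unknotted $m$-chart presents an unknotted link (Lemma \ref{0302-2}). No further comment is needed.
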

\begin{proof}
Since insertion of a free edge to $\Gamma$ presents a 1-handle addition to $F$ \cite{Kamada92-2, Kamada02}, together with  Lemma \ref{0302-2}, we have the result.
\end{proof}

  \begin{prop} \label{lem1}
Let $\Gamma$ be an $m$-chart on $T$, and 
let $w(\Gamma)$ be the number of white vertices in $\Gamma$. 
Then
\[
u(\Gamma) \leq w(\Gamma)+m-1. 
\]
\end{prop}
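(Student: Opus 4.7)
The plan is to adapt Kamada's proof of the analogous inequality $u(\Gamma) \leq w(\Gamma) + m-1$ for $m$-charts on $S^2$ in \cite{Kamada99} to the torus setting. The argument proceeds in two stages: first, use $w(\Gamma)$ free-edge insertions to eliminate all white vertices of $\Gamma$ via C-moves; second, use $m-1$ further insertions of free edges, one of each label $1, \ldots, m-1$, to reduce the remaining chart to an unknotted $m$-chart on $T$.

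For stage one, by an ambient isotopy of $T$ rel $\pi^{-1}(x_0)$, I first bring each white vertex into a small neighborhood away from $x_0$ (but outside $D_0$). For each white vertex $v$ with incident labels $i, i+1$, I insert a free edge of the appropriate label into $D_0$, slide it toward $v$ by CII-moves, and apply a local sequence of CI and CIII-moves that absorbs $v$ at the cost of that one free edge. Since every move takes place inside a small 2-disk of $T$, the step is identical to the corresponding step in Kamada's $S^2$-argument \cite{Kamada99}. After $w(\Gamma)$ such insertions, $\Gamma$ is C-move equivalent to a chart $\Gamma_1$ without white vertices. Up to C-moves and the free edges already accumulated in $D_0$, $\Gamma_1$ is of the form $\Gamma_m(a, b)$ for some commutative $m$-braids $a$ and $b$. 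In stage two I insert $m-1$ more free edges $F_1, \ldots, F_{m-1}$ into $D_0$, one of each label. For each label $i$, the free edge $F_i$ is slid by CII-moves along an edge of $\Gamma_m(a, b)$ of label $i$, cutting the closed edge into arcs whose endpoints are black vertices; CI-moves then isotope these arcs back into $D_0$, where they recombine into free edges. Iterating over all labels and edges leaves a chart consisting only of free edges, i.e. an unknotted $m$-chart on $T$ in the sense of the definition above Lemma \ref{0302-2}.

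The main obstacle is the second stage, where edges of $\Gamma_m(a,b)$ can carry non-trivial homology classes in $T$ while the free edges inserted in $D_0$ are null-homologous. The resolution is that a single CII-cut converts a non-contractible closed edge of label $i$ into an arc whose endpoints lie in $D_0$; after this cut the arc is null-homotopic in $T$ minus the remaining chart, and can be pushed into $D_0$ by CI-moves. The commutativity of $a$ and $b$ ensures that the successive simplifications do not introduce new white vertices, so the total cost remains $w(\Gamma)$ free edges for stage one and $m-1$ for stage two, giving $u(\Gamma) \leq w(\Gamma) + m - 1$.
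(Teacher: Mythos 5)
Your proposal is correct and follows essentially the same two-stage argument as the paper: $w(\Gamma)$ free-edge insertions to absorb the white vertices locally, then $m-1$ further insertions, one per label, each reused to cut the remaining loops open and retract the resulting arcs into $D_0$, with the torus-specific point (loops may be homologically nontrivial on $T$) resolved exactly as in the paper's loop-elimination figure. The only quibble is terminological: the move that fuses a free edge of label $i$ with a parallel loop of the same label $i$ is a CI-move of type (2) rather than a CII-move (CII-moves only carry a black vertex across an edge whose label differs by more than one), but this does not affect the argument.
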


\begin{proof}
We use the same argument used in \cite{Kamada99}, as follows.
The $m$-chart $\Gamma$ on $T$ can be transformed to have no white vertices by inserting a free edge to near each white vertex and applying a CI-move of type (2) (see the figure denoted by (2) in Fig. \ref{cmove}) and a CIII-move, as in Fig. \ref{fig-0516-01}. 
We denote the resulting $m$-chart by $\Gamma^\prime$. 
The vertices of $\Gamma^\prime$ are black vertices or crossings. Let us consider the union of connected edges with the same label. It is homeomorphic to an interval or a circle. We call it an edge or a loop respectively. If it is an edge, then its endpoints are a pair of black vertices. By applying CII-moves, it is deformed to a free edge. Hence we can assume that $\Gamma^\prime$ consists of free edges and loops. By applying a CI-move of type (2) and then an ambient isotopies of $T$ and by applying CII-moves if necessary (see Fig. \ref{fig-0513-02}), from the loop nearest  $\cup_{i=1}^{m-1}F_i$, we can see that $\Gamma^\prime \cup \cup_{i=1}^{m-1}F_i$ is C-move equivalent to an unknotted $m$-chart on $T$. 
This implies that the unknotting number $u(\Gamma)$ is at most $w(\Gamma)+m-1$.
\begin{figure}
\includegraphics*{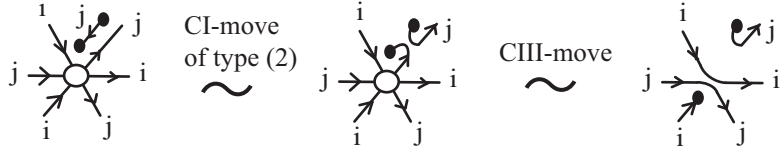}
\caption{We can eliminate a white vertex by inserting a free edge $F_j$. In the figure, $|i-j|=1$.}
\label{fig-0516-01}
\end{figure}
\begin{figure}
\includegraphics*{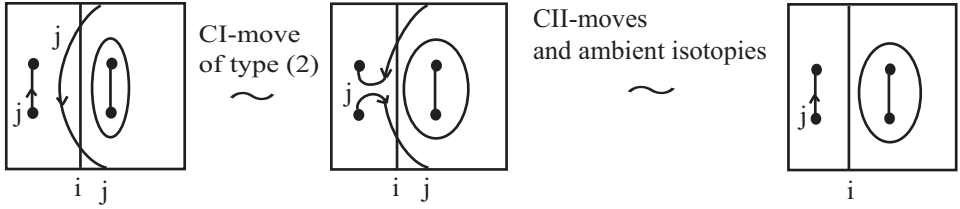}
\caption{We can eliminate a loop with label $j$ by inserting a free edge $F_j$. In the figure, $|i-j|>1$.}
\label{fig-0513-02}
\end{figure}
\end{proof}

In some cases, we can give a better upper bound of $u(\Gamma)$ than 
Proposition \ref{lem1}. 
Recall that $\Delta$ is an m-braid
with a full twist. Let $\Delta=(\sigma_1 \sigma_2 \cdots \sigma_{m-1})^m$ be the presentation.

\begin{thm}\label{Prop5-5}
For any $m$-braid $b$ and any integer $n$, 
\[
u(\Gamma_m(b, \Delta^n)) \leq m-1. 
\]
\end{thm}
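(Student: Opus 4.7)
The plan is to insert $m-1$ free edges $F_1, F_2, \ldots, F_{m-1}$ (with $F_i$ labeled $i$) inside a small disk $D_0 \subset T$ disjoint from $\Gamma_m(b, \Delta^n)$, and then show that the augmented chart $\Gamma_m(b, \Delta^n) \cup \bigsqcup_{i=1}^{m-1} F_i$ is C-move equivalent to an unknotted $m$-chart on $T$. By the definition of $u(\Gamma)$ this will give the desired bound $u(\Gamma_m(b, \Delta^n)) \leq m-1$.

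The main idea is to exploit the fact that $\Delta^n$ lies in the center of the $m$-braid group $B_m$. Since $\Delta^n$ commutes with $b$, in the chart $\Gamma_m(b, \Delta^n)$ the longitudinal $\Delta^n$-edges can be slid past the meridional $b$-edges via a sequence of CII- and CIII-moves corresponding to the braid identity $b^{-1}\Delta^n b = \Delta^n$. First I would use these moves to push the entire $\Delta^n$-subchart into a standard annular region $A \subset T$ disjoint from the $b$-subchart. After this step all white vertices of the chart are contained in $A$, while outside $A$ the $b$-edges form a collection of parallel loops in the meridional direction with neither white nor black vertices.

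Next I would show that the $\Delta^n$-part inside $A$, together with the $m-1$ free edges $F_1, \ldots, F_{m-1}$, reduces to a disjoint union of loops by C-moves alone. The point is that the chart of a full twist has a standard comb-like form in which, for each label $i$, the edges labeled $i$ contribute a controlled pattern of white vertices and crossings involving the $i$-th strand. By expanding each $F_i$ around a longitudinal loop of $T$ via a CI-move of type (2) and then applying CIII- and CII-moves at the white vertices, we cancel the white vertices label by label. After this the entire chart consists only of loops, and the loop-elimination argument at the end of the proof of Proposition \ref{lem1} (see Fig. \ref{fig-0513-02}) shows that what remains is C-move equivalent to an unknotted $m$-chart on $T$, without needing any further free-edge insertions.

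The main obstacle is justifying the separation step: translating the centrality $b^{-1}\Delta^n b = \Delta^n$ into an explicit sequence of CII- and CIII-moves on $T$ that genuinely pushes the $\Delta^n$-subchart off the $b$-subchart without introducing any new white vertices. This requires careful bookkeeping of how the white vertices of $\Gamma_m(b, \Delta^n)$ pair up and cancel in succession as the longitudinal $\Delta^n$-band is swept past each crossing of $b$, which is the technical heart of the argument.
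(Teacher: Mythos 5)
There is a genuine gap, and it is the separation step. You propose to first push the $\Delta^n$-subchart off the $b$-subchart so that afterwards the $b$-edges are parallel meridional loops carrying no vertices and all white vertices lie in an annulus containing only the full-twist data. But $\Delta^n$ presented along a longitude is itself just a union of parallel loops with no white vertices at all, so if this separation could be achieved the resulting chart of $\mathcal{S}_m(b,\Delta^n)$ would have only crossings, hence the surface would have a diagram with no triple points; this would make every $\mathcal{S}_m(b,\Delta^n)$ pseudo-ribbon, contradicting Theorems \ref{thm:0510-1} and \ref{cor:0510-2}. If instead the free edges are meant to participate in the separation, the step is still unjustified: the only mechanism exhibited for killing a white vertex with a free edge (Fig.~\ref{fig-0516-01}) consumes one free edge per white vertex, which is precisely why Proposition \ref{lem1} carries the $w(\Gamma)$ term; your assertion that $m-1$ free edges cancel all the white vertices ``label by label'' is the entire content of the theorem and is nowhere supplied. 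Note also that CII- and CIII-moves involve a black vertex, so they cannot act on $\Gamma_m(b,\Delta^n)$ before the free edges are brought into play; until then only CI-moves are available, and these are local to disks while the torus is not one.

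The paper deploys the centrality of $\Delta^n$ in a different, and essential, way (Lemma \ref{lem2}): a free edge $F_j$ is merged by a CI-move of type (2) with the edge of $b$ presenting its first letter $\sigma_j^{\pm 1}$, producing two black vertices; one of them is transported once around the longitude through the $\Delta^n$-part, and because $\Delta^{n}\sigma_j\Delta^{-n}=\sigma_j$ (Lemma \ref{lem:0522-1}) it returns presenting $\sigma_j^{\pm 1}$ again, so the two black vertices re-fuse into a free edge $F_j$. The net effect is that one letter of $b$ is deleted while all $m-1$ free edges are recovered intact; induction on the word length of $b$ reduces everything to $\Gamma_m(e,\Delta^n)\cup\bigl(\cup_{i=1}^{m-1}F_i\bigr)$, which is a union of parallel loops absorbed into the free edges as in Fig.~\ref{fig-0513-02}. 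This ``delete $b$ letter by letter while recycling the free edges'' mechanism is the missing idea; without it, or an equivalent, your outline does not close.
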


\begin{proof}[Proof of Theorem \ref{thm:0503-01}]
The torus-covering link $\mathcal{S}_m(b, \Delta^n)$ is presented by 
an $m$-chart $\Gamma_m(b,\Delta^n)$.
By Proposition \ref{prop:1} and Theorem \ref{Prop5-5}, we have the result.

\end{proof}

\subsection{Proof of Theorem \ref{Prop5-5}}\label{sec:4-2}
 \subsubsection{Key Lemma}
\begin{lem}\label{lem2}
For an $m$-braid $b$, let us fix a word presentation of $b$, and let $l(b)$ be the word length of $b$, i.e. the number of letters of $b$. 
Let $b=\sigma_j^\epsilon b_1$ ($j \in \{ 1,2,\ldots,m-1 \}$, $\epsilon \in \{+1, -1\}$) be the presentation, where $b_1$ is the subword of $b$ with the word length $l(b)-1$. 
Then 
\[
\Gamma_m(b, \Delta^n) \cup \cup_{i=1}^{m-1} F_i \sim \Gamma_m(b_1, \Delta^n) \cup \cup_{i=1}^{m-1} F_i, 
\]
where \lq\lq $\sim$" denotes C-move equivalence.
\end{lem}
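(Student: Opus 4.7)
The plan is to produce a sequence of C-moves that uses the free edge $F_j$ to eliminate the meridional edge of $\Gamma_m(b, \Delta^n)$ corresponding to the leading letter $\sigma_j^\epsilon$ of $b$. This extends the loop-elimination manoeuvre of Fig.~\ref{fig-0513-02}, where $F_j$ annihilates a simple loop of label $j$, to the present setting in which the loop carries intermediate crossings and white vertices coming from its transverse interactions with the $\Delta^n$-arcs and with the other letters of $b$.

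Concretely, I would first view $\Gamma_m(b,\Delta^n)$ as drawn on a fundamental rectangle of $T$: the word $b$ is realized by $l(b)$ meridional arcs (one per letter), $\Delta^n$ is realized by $l(\Delta^n)$ longitudinal arcs, and they meet in the crossings and white vertices forced by $b\Delta^n = \Delta^n b$. The arc $\alpha$ associated with $\sigma_j^\epsilon$ is then a distinguished meridional edge of label $j$ (with orientation set by $\epsilon$), and removing it together with a local cleanup of its incident vertices gives $\Gamma_m(b_1,\Delta^n)$. The key step is to slide $F_j$ out of $D_0$ by an ambient isotopy of $T$ rel $\pi^{-1}(x_0)$ until it lies next to $\alpha$, expand it via CI- and CII-moves into a thin bigon whose one side runs anti-parallel to $\alpha$, and then cancel that side against $\alpha$ by a CII-move. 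As the resulting cancelling front is propagated along $\alpha$, each singularity met on the way is pushed across by a CIII-move (together with the necessary CI-moves). Since $\alpha$ is a closed meridional curve on $T$, after one full traversal the front closes up, $\alpha$ has been entirely consumed, and an ambient isotopy returns $F_j$ to $D_0$; the remaining free edges $F_i$ ($i \neq j$) are undisturbed up to isotopy.

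The main obstacle is the local verification that pushing the cancelling front past a single white vertex on $\alpha$ can indeed be realised by C-moves alone. At a white vertex coming from a braid-relation of the form $\sigma_j \sigma_{j\pm1} \sigma_j = \sigma_{j\pm1} \sigma_j \sigma_{j\pm1}$, one may need a transient use of the free edge $F_{j\pm1}$ to mediate the CIII-move and then dispose of the excess via a subsequent CII-cancellation; this is precisely why the full family $\cup_{i=1}^{m-1} F_i$ must be present on both sides of the equivalence. Once this single-vertex model has been handled, the full statement follows by iterating the local move across all singularities of $\Gamma_m(b,\Delta^n)$ lying on $\alpha$ and tidying up with a final ambient isotopy.
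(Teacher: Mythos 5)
Your overall strategy (cancel the free edge $F_j$ against the chart edge coming from the leading letter $\sigma_j^{\epsilon}$, then sweep the resulting cancellation through the rest of the chart) starts out like the paper's argument, but the middle of your proof has a genuine gap: you never use, or even identify, the hypothesis that the second basis braid is a power of the full twist $\Delta$. In the paper's proof, cancelling $F_j$ against the edge $e$ presenting $\sigma_j$ produces two black vertices $v_1,v_2$, and the loop around the vertex that must be dragged through the region carrying $\Delta^n$ presents the conjugate $\Delta^{n}\sigma_j\Delta^{-n}$; the two black vertices can be recombined into a free edge only because $\Delta^n$ is central, so that the singular braid $\Delta^n\dot{\sigma}_j$ can be rewritten as $\dot{\sigma}_j\Delta^n$ by the chart-realizable relations (Lemma \ref{lem:0522-1}). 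In your language, this is exactly the assertion that "after one full traversal the front closes up" --- and that assertion is precisely what needs proof, since the two ends of the consumed arc match up only if the braid-theoretic conjugation encountered along the way is trivial. As written, your argument would apply verbatim with $\Delta^n$ replaced by an arbitrary braid commuting with $b$, which is not what the lemma (or its use in Theorem \ref{Prop5-5}) permits.

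Two further points. First, the edge of $\Gamma_m(b,\Delta^n)$ associated with the letter $\sigma_j^{\epsilon}$ is not a closed meridional curve of constant label $j$: once it enters the region realizing the commutation $b\Delta^n=\Delta^n b$ it meets white vertices, where the continuing edge changes label, so the picture of a single label-$j$ loop being "entirely consumed" does not match the actual chart. The paper sidesteps this by cutting $e$ \emph{outside} a disk $D$ containing all vertices and then transporting a single black vertex through $D$ via Lemma \ref{lem:0522-1}, rather than consuming an arc in place. Second, your claim that the other free edges $F_{j\pm1}$ are needed to "mediate" CIII-moves is both unsubstantiated and unnecessary: a CIII-move passes a black vertex through a crossing without any auxiliary free edge, and indeed the paper reduces the whole lemma to the single free edge $F_j$ at the outset. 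To repair your proof you should isolate and prove the local statement that a black vertex of label $j$ can be pushed across the subchart presenting $\Delta^n$, using the centrality of the full twist; this is the content of Lemma \ref{lem:0522-1} and is the indispensable step.
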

\subsubsection{Proof of Theorem \ref{Prop5-5}}
\begin{proof}

Put $\Gamma=\Gamma_m(b,\Delta^n)$. We show that $\Gamma^\prime=\Gamma \cup \cup_{i=1}^{m-1} F_i$ is C-move equivalent to $\cup_{i=1}^{m-1} F_i$. 
By Lemma \ref{lem2}, by induction on the number of letters of $b$, $\Gamma^\prime$ is C-move equivalent to $\Gamma_m(e, \Delta^n) \cup \cup_{i=1}^{m-1} F_i$, 
where $e$ denotes the empty word. 
It is known \cite{Kamada92, Kamada02} that for an intersection of an $m$-chart and a 2-disk containing no black vertices, by CI-moves, we can rewrite the $m$-chart in the 2-disk as we like as long as it has no black vertices.
Since $\Gamma_m(e, \Delta^n)$ contains no black vertices, it is C-move equivalent to an $m$-chart consisting of parallel loops presenting $\Delta^n$. 
Hence, by applying CI-moves of type (2), we can transform it to $\cup_{i=1}^{m-1} F_i$ (see Fig. \ref{fig-0513-02}); thus $u(\Gamma) \leq m-1$.
\end{proof}

\subsubsection{Proof of Lemma \ref{lem2}}
 
\begin{proof}
It suffices to show that 
\[
\Gamma_m(b, \Delta^n) \cup F_j \sim \Gamma_m(b_1, \Delta^n) \cup F_j.
\]
Let us consider the case when $\epsilon=+1$; then we have $b=\sigma_j b_1$.
Put $\Gamma=\Gamma_m(b, \Delta^n)$, and 
put $\Gamma_0=\Gamma \cup F_j$. 
Let $D$ be a disk in $T-(\mu \cup \lambda)$ such that $\Gamma \cap(T-D)$ contains no vertices.
Let us take a point $y_0 \in \partial D$ such that 
there exists an oriented path $\eta$ from $x_0$ to $y_0$ with $\Gamma \cap \eta =\emptyset$ and further a conjugate by $\eta$ of a loop $\partial D$ oriented anti-clockwise with a base point $y_0$ is homotopic to $\mu \lambda \mu^{-1} \lambda^{-1}$. 
See Fig. \ref{fig-0510-02}. 

Let $e$ be the edge of $\Gamma_0 \cap (T-D)$ presenting $\sigma_j$ the first letter of $b$.
Move the free edge $F_j \subset \Gamma_0$ by an ambient isotopy in such a way that it is parallel to $e$ and the orientation is reverse. Then let us apply a CI-move of type (2) to these edges. By an ambient isotopy of $T$, we can deform the resulting $m$-chart to the $m$-chart $\Gamma_1$ obtained from $\Gamma-e$ by attaching two black vertices to $\partial e \in \partial D$; note that the attached  black vertices come from $F_j$. We denote them by $v_1$ and $v_2$ in such a way that  the connected edge is oriented from $v_1$ (resp. toward $v_2$). 

Let $\rho$ be an oriented loop such that it starts from $x_0$, goes along $\eta$ to $y_0$, goes along $\partial D$ clockwise to near $v_2$, goes around $v_2$ anti-clockwise and comes back to $x_0$ by the reverse course. Then $\Gamma_1\cap \rho$ presents the braid 
$\Delta^{n} \sigma_j \Delta^{-n}$.
By Lemma \ref{lem:0522-1}, we can deform $\Gamma_1$ by C-moves and ambient isotopies to an $m$-chart $\Gamma_2$ satisfying that there is exactly one black vertex $v_2^\prime$ such that we can take an oriented loop $\rho^\prime$ with the base point $x_0$, which goes around $v_2^\prime$ anti-clockwise, satisfying that  $\Gamma_2 \cap \rho^\prime$ presents the braid $\sigma_j$.
By applying a CI-move of type (2) around  $v_1$ and $v_2^\prime$, we have a new free edge $F_j$. Thus we have 
$\Gamma_m(b_1, \Delta^n) \cup  F_j.$ See Fig. \ref{fig-0521-01}.
The case when $\epsilon=-1$ can be shown likewise.
\begin{figure}
\includegraphics*{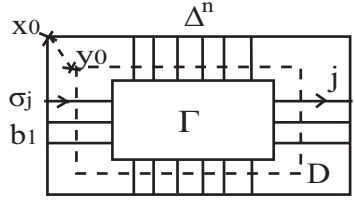}
\caption{An $m$-chart $\Gamma_m(b, \Delta^n)$ presenting $\mathcal{S}_m(b, \Delta^n)$.}
\label{fig-0510-02}
\end{figure}

\begin{figure}
\includegraphics*{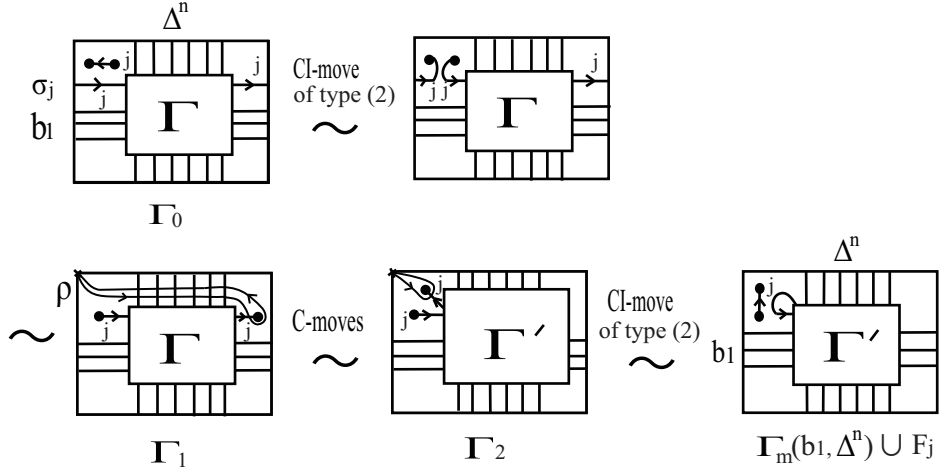}
\caption{An $m$-chart $\Gamma_m(\sigma_j b_1, \Delta^n) \cup F_j$ 
is C-move equivalent to $\Gamma_m(b_1, \Delta^n) \cup F_j$. In the figure, $\Gamma$ and $\Gamma^\prime$ contain no black vertices.}
\label{fig-0521-01}
\end{figure}
\end{proof}

The proof of Lemma \ref{lem2} can be shortened by using the braid monodromies and braid systems of a simple braided surface over a 2-disk (see \cite[Chapter 17]{Kamada02}), as follows. 
We assume that the $m$-charts are on a 2-disk $T-(\mu\cup\lambda)$.
The braid system of $\Gamma_1$ (resp. $\Gamma_m(b_1, \Delta^n) \cup F_j$) is  $ (\sigma_j^{-1}, \Delta^{n} \sigma_j \Delta^{-n})$ (resp. $(\sigma_j^{-1}, \sigma_j)$). 
Since $\Delta^{n} \sigma_j \Delta^{-n} = \sigma_j$ in $B_n$, the braid systems are equivalent. Hence we can see that their presenting braided surfaces are equivalent; thus the $m$-charts are C-move equivalent. See \cite[Chapter 17]{Kamada02}.
 
\begin{lem}\label{lem:0522-1}
We can transform the $m$-chart on a 2-disk as in Fig. \ref{fig-0523-01} (1) to the $m$-chart as in Fig. \ref{fig-0523-01} (2) by C-moves and ambient isotopies.
\end{lem}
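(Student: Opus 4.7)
The plan is to appeal to the standard correspondence (Kamada, \cite[Chapter 17]{Kamada02}) between $m$-charts on a $2$-disk modulo C-moves and ambient isotopies on one side, and braid systems in the $m$-braid group $B_m$ modulo Hurwitz moves and simultaneous conjugation on the other. Under this correspondence, each black vertex of an $m$-chart contributes one entry of the associated braid system, obtained as the braid monodromy along a loop that is based at a fixed point of $\partial D$ and encircles the black vertex once in the positive direction.

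First I would fix a base point $x_0$ on the boundary of the $2$-disk and read off the braid systems of the two charts. In Figure \ref{fig-0523-01}(1), the black vertex in question is situated inside the region enclosed by $|n|$ parallel loops that together present $\Delta^{n}$. An encircling loop based at $x_0$ must traverse these $\Delta^{n}$-loops inward, wrap once around the black vertex, and return outward along the same path, so its monodromy reads
\[
\Delta^{n}\sigma_j\Delta^{-n}.
\]
In Figure \ref{fig-0523-01}(2), the corresponding black vertex sits outside the family of loops, and the associated monodromy is simply $\sigma_j$. Any other black vertices common to the two pictures contribute identical entries to both braid systems and are irrelevant to the comparison.

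Next I would invoke the fact that $\Delta=(\sigma_1\sigma_2\cdots\sigma_{m-1})^{m}$ generates the center of $B_m$, so that $\Delta^{n}\sigma_j\Delta^{-n}=\sigma_j$ in $B_m$ for every integer $n$ and every $j\in\{1,\ldots,m-1\}$. Consequently the braid systems associated to the two $m$-charts are literally identical, not merely Hurwitz-equivalent. By the correspondence above, their associated simple braided surfaces over the $2$-disk are equivalent, and hence the two $m$-charts themselves are C-move equivalent.

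The main obstacle is purely bookkeeping: one must fix consistent conventions for the orientations of edges, the choice of base point, and the direction of the encircling loop, so that the monodromy around the enclosed black vertex in Figure \ref{fig-0523-01}(1) is indeed $\Delta^{n}\sigma_j\Delta^{-n}$ on the nose, rather than some conjugate thereof that would require extra Hurwitz manipulation. Once these conventions are pinned down, the entire argument collapses to the centrality of the full twist $\Delta$.
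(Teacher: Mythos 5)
Your proof is correct, but it takes a genuinely different route from the paper's own proof of this lemma. The paper argues entirely inside the chart calculus: it shows by explicit C-moves and ambient isotopies (Fig. \ref{fig-0522-01}) that the singular braid $c\dot{\sigma}_i$ read along a path through the black vertex can be rewritten using the exchange relations $\sigma_i\dot{\sigma}_i\leftrightarrow\dot{\sigma}_i\sigma_i$, $\sigma_i\dot{\sigma}_j\leftrightarrow\dot{\sigma}_j\sigma_i$ ($|i-j|>1$), $\sigma_i\sigma_j\dot{\sigma}_i\leftrightarrow\dot{\sigma}_j\sigma_i\sigma_j$ ($|i-j|=1$) together with the braid relations, and then checks combinatorially that $\Delta^n\dot{\sigma}_j$ can be pushed to $\dot{\sigma}_j\Delta^n$ by these moves. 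You instead invoke the classification of simple braided surfaces over the disk by their braid systems and reduce everything to the centrality of the full twist, so that the two braid systems are literally equal. This is legitimate, and in fact the paper explicitly records this very shortcut in the remark following the proof of Lemma \ref{lem2}, where the braid systems $(\sigma_j^{-1},\Delta^n\sigma_j\Delta^{-n})$ and $(\sigma_j^{-1},\sigma_j)$ are compared. What your route buys is brevity and conceptual clarity; what it costs is self-containedness, since it imports the full strength of the theorem that Hurwitz-equivalent (here, identical) braid systems yield equivalent braided surfaces and hence C-move equivalent charts, whereas the paper's manipulation stays at the level of elementary chart moves. Two points you should still pin down: the version of the braid-system correspondence you need is the one \emph{rel boundary} (the lemma is applied to a subdisk of a larger chart on $T$, so the transformation must fix $\partial\Gamma_1=\partial\Gamma_2$), and the sign of the black vertex only changes the entry to $\Delta^n\sigma_j^{-1}\Delta^{-n}$ versus $\sigma_j^{-1}$, which centrality handles just as well.
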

\begin{proof}
\begin{figure}
\includegraphics*{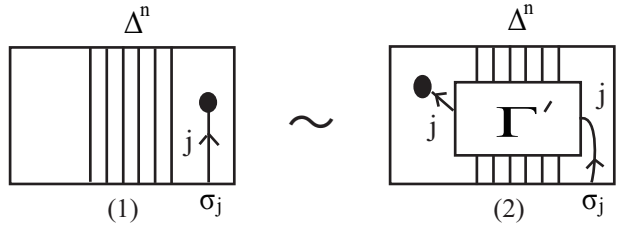}
\caption{Two equivalent $m$-charts, where $\Gamma^\prime$ contains no black vertices.}
\label{fig-0523-01}
\end{figure}
\begin{figure}
\includegraphics*{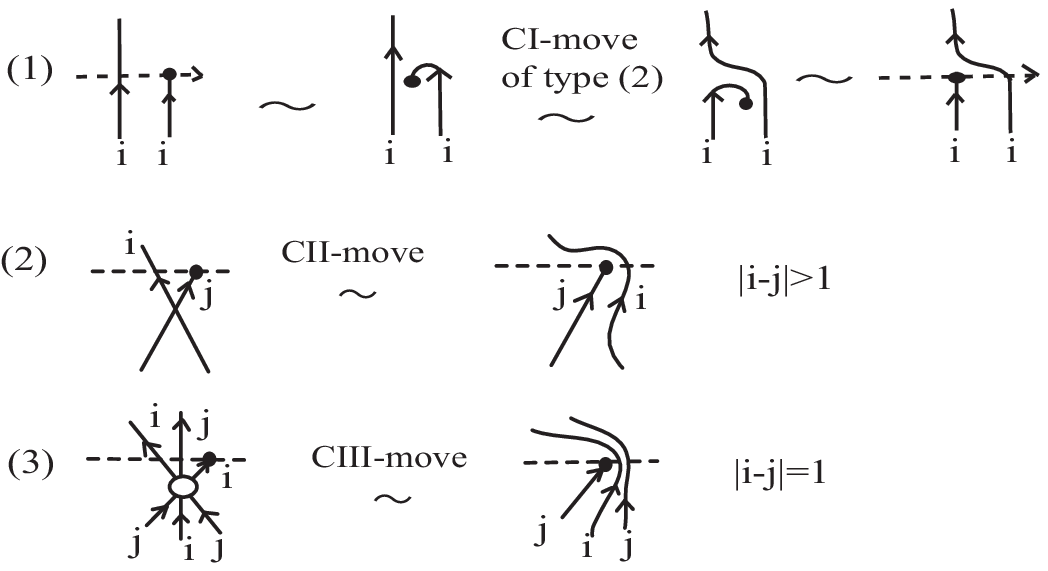}
\caption{}
\label{fig-0522-01}
\end{figure}
Let $\Gamma_1$ be an $m$-chart on a 2-disk with exactly one black vertex. 
Let $\rho$ be a path satisfying that $\Gamma_1\cap \rho$ consists of finite points of edges and the black vertex of $\Gamma_1$ such that $\Gamma_1 \cap\rho$ presents a singular $m$-braid $c \dot{\sigma}_i$ for some $m$-braid presentation $c$ and $i \in \{1,\ldots,m-1\}$, where $\dot{\sigma}_i$ denotes the singular point presented by the black vertex. 
By Fig. \ref{fig-0522-01}, by C-moves and ambient isotopies, we can transform $\Gamma_1$ to an $m$-chart $\Gamma_2$ with $\partial \Gamma_2=\partial \Gamma_1$ such that $\Gamma_2$ has exactly one black vertex which is on $\rho$, and $\Gamma_2\cap\rho$ presents a singular braid related with $c\dot{\sigma}_i$ by the following transformations:
\begin{enumerate}
\item
$\sigma_i\dot{\sigma_i} \leftrightarrow \dot{\sigma_i}\sigma_i$, 

\item
$\sigma_i \dot{\sigma}_{j}\leftrightarrow\dot{\sigma}_j \sigma_i$, 
where $|i-j|>1$, 

\item
$\sigma_i \sigma_j \dot{\sigma_i}\leftrightarrow\dot{\sigma_j} \sigma_i \sigma_j$, where $|i-j|=1$, 
\end{enumerate}
and further $c$ is related by the following transformations:
\begin{enumerate}
\item[(4)]
$\sigma_i \sigma_j = \sigma_j \sigma_i$, where $|i-j|>1$, 

\item[(5)]
$\sigma_i \sigma_j \sigma_i = \sigma_j \sigma_i \sigma_j$, where $|i-j|=1$,
\end{enumerate}
where $i,j \in \{1,\ldots,m-1\}$.

Recall that $\Delta$ has the presentation $\Delta=(\sigma_1 \sigma_2 \cdots \sigma_{m-1})^m$.
Since
$\Delta^n \dot{\sigma_j}$ is transformed to 
$\dot{\sigma_j}\Delta^n$ by these transformations, we can see that the $m$-chart as in Fig. \ref{fig-0523-01} (1) is C-move equivalent to the $m$-chart as in Fig. \ref{fig-0523-01} (2).
\end{proof}
  
 \section{Quandle colorings and quandle cocycle invariants}\label{sec:4}
In Section \ref{sec:5-1}, we review quandle colorings and quandle cocycle invariants \cite{CJKLS, CKS, CKS04}. 
In Section \ref{sec:5-2}, we review Iwakiri's results \cite{Iwakiri} which gives a lower bound of the unknotting number or the triple point cancelling number of a surface knot or a surface link, by using $p$-colorings and quandle cocycle invariants associated with Mochizuki's 3-cocycle.

\subsection 
{}\label{sec:5-1}
A {\it quandle} \cite{Brieskorn, Joyce, Matveev, Takasaki} is a set $X$ with a binary operation $*:X \times X \to X$ satisfying the following:

\begin{enumerate}
\item (Idempotency) For any $x \in X$, $x*x=x$.
\item (Right invertibility) For any $y,z \in X$, there is a unique $x \in X$ such that $x*y=z$.
\item (Right self-distributivity) For any $x,y,z \in X$, $(x*y)*z=(x*z)*(y*z)$.
\end{enumerate}
For any $x \in X$, let $\mathcal{R}_x :X \to X$ be a map defined by $\mathcal{R}_x(y)=y*x$ for $y \in X$. Since $\mathcal{R}_x$ ($x \in X$) is a bijection by Axiom (2), 
the set $\{\mathcal{R}_x \mid x \in X\}$ generates a group called the {\it inner automorphism group} of $X$, which we denote by $\mathrm{Inn}(X)$. A quandle $X$ is said to be {\it connected} if $\mathrm{Inn}(X)$ acts transitively on $X$.

For a classical link $L$ or 
a surface link $F$, we briefly review quandle colorings and quandle cocycle invariants as follows (for details see \cite{CJKLS, CKS, CKS04}).
Let us denote by $D$ the diagram of $L$ or $F$, i.e. 
the image of $L$ or $F$ by a generic projection 
to $\mathbb{R}^2$ or $\mathbb{R}^3$. In order to indicate crossing information of the diagram, we break the under-arc or the under-sheet into two pieces missing the over-arc or the over-sheet. 
Then the diagram is presented by a disjoint union of arcs, or compact surfaces called {\it broken sheets}. 
Let $B(D)$ be the set of such arcs or broken sheets. 
 An {\it $X$-coloring} for a diagram $D$ of $L$ or $F$ 
is a map $C \,:\, B(D) \rightarrow X$ 
   as in Fig. \ref{1001-1}. 
The image by $C$ is called the {\it color}.

\begin{figure}
  \includegraphics*{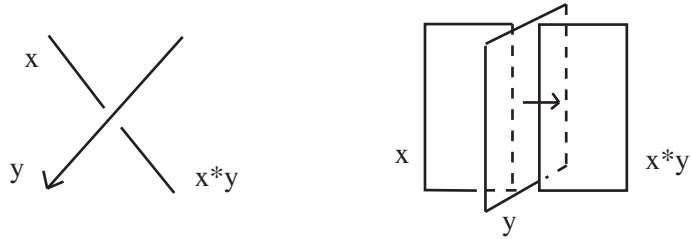}
  \caption{An $X$-coloring $C$, where $x$, $y$, and $x*y$ are the colors of arcs or broken sheets given by $C$, and the orientation of the over-sheet is denoted by a normal.}
  \label{1001-1}
  \end{figure}

Let $G$ be an abelian group. 
A {\it 2-cocycle} with the coefficient group $G$ is a map $f \, :\, X^2 \rightarrow G$ 
satisfying 
\begin{eqnarray*}
& f(s,u)+f(s*u, t*u)=f(s,t)+f(s*t, u), \ \mathrm{and} & \\
& f(s,s)=0 &
\end{eqnarray*}
for any $s,t,u \in X$. 
A {\it 3-cocycle} is a map $f \, :\, X^3 \rightarrow G$ 
satisfying 
\begin{eqnarray*}
& f(s,t,u)+f(s*u, t*u, v)+f(s,u,v)=f(s*t, u,v)+f(s,t,v)+f(s*v, t*v, u*v), \\
& f(s,s,t)=0 \ \mathrm{and} \ f(s, t, t)=0 &
\end{eqnarray*}
for any $s,t,u,v \in X$. 

For an $X$-coloring $C$ of the diagram $D$ of a classical link $L$ or a surface link $F$, the quandle cocycle invariant is defined as follows. For the case of a classical link, at each crossing $r$ of the diagram $D$, the {\it weight} $W_f(r;C)$ at $r$ for a 2-cocycle $f$ is given as in Fig. \ref{0925-3}. 
Put 
\[
\Phi_f(L; C)=\sum_{r \in X_2(D)} W_f(r; C),
\]
where $X_2(D)$ is the set of the crossings of $D$. 
For the case of a surface link, at each triple point $t$ of the diagram $D$, the {\it weight} $W_f(t;C)$ at $t$ for a 3-cocycle $f$ is given as in Fig. \ref{fig-0510-03}. 
Put 
\[
\Phi_f(F; C)=\sum_{t \in X_3(D)} W_f(t; C),
\] 
where $X_3(D)$ is the set of the triple points of $D$. It is known \cite{CJKLS} that $\Phi_{f}(L;C)$ or $\Phi_f(F;C)$ is an invariant of $L$ or $F$ under Reidemeister moves or Roseman moves for quandle colored diagrams. We call it the {\it quandle cocycle invariant} of $L$ or $F$ associated with an $X$-coloring $C$ (see \cite{CJKLS}). Let $X$ be a finite quandle, i.e. a quandle consisting of finitely many elements. Since $B(D)$ is a finite set, so is the set of $X$-colorings for $D$. Let $\mathrm{Col}_X(D)$ be the set of all the $X$-colorings. Then define $\Phi_{f}(L)$ or $\Phi_f(F)$ by the family
  \begin{eqnarray*}
  &&\Phi_{f}(S)=\{\Phi_f(S;C) \mid C \in \mathrm{Col}_X(D) \}, 
\end{eqnarray*}
where $S=L$ or $F$ and $f$ is a 2-cocycle (resp. a 3-cocycle) if $S=L$ (resp. $F$). 
  We call $\Phi_f(L)$ or $\Phi_f(F)$ the {\it quandle cocycle invariant} of $L$ or $F$ 
associated with $f$ \cite{CJKLS}. 
\\

\begin{figure}
\includegraphics*{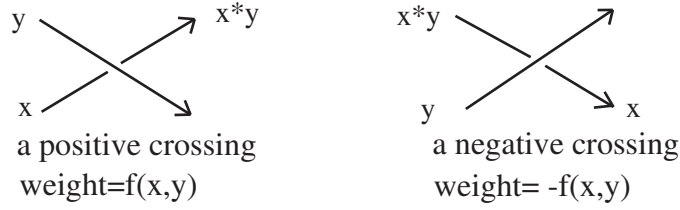}
\caption{The weight at a crossing, where $x$, $y$, and $x*y$ are the colors of arcs by $C$, and $f$ is a 2-cocycle.}
\label{0925-3}
\end{figure}
\begin{figure}
\includegraphics*{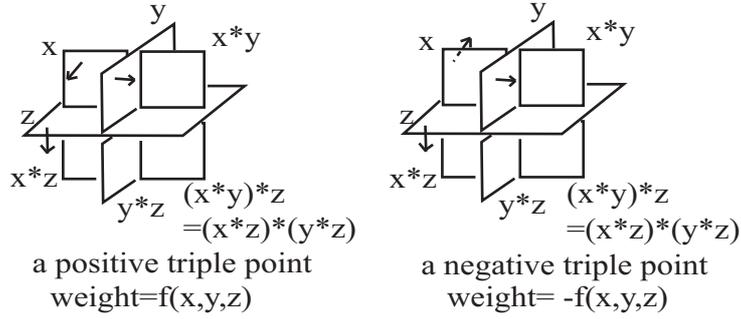}
\caption{The weight at a triple point, where the orientations are denoted by normals, and $x$, $y$, $z$, etc. are the colors by $C$, and $f$ is a 3-cocycle.}
\label{fig-0510-03}
\end{figure}

Next we review the shadow cocycle invariant of a classical link. 
Let $C$ be an $X$-coloring of the diagram $D$ of a classical link $L$. 
For $x \in X$, let $C^*_x$ be a map 
$C^*_x \,:\, B^*(D) \rightarrow X$, where $B^*(D)$ is the union of 
$B(D)$ and the set of regions 
of $\mathbb{R}^2$ separated by the underlying immersed strings of the diagram $D$, satisfying the following conditions: 
\begin{enumerate}
\item
$C^*_x$ restricted to $B(D)$ is coincident with $C$. 
\item 
The color of the regions are as in Fig. \ref{shadow}.   
\item 
The color of the unbounded region is $x$. 
\end{enumerate} 
  By \cite{CKS}, $C^*_x$ exists uniquely for given $C$ and $x$. 
For a 3-cocycle $f$ and $C$ and $x$, let us 
define the weight $W_f^*(r; C, x)$ at a crossing $r$ as in Fig. \ref{shadow}. 
Put 
\[
\Psi_f^*(L;C,x)=\sum_{r \in X_2(D)} W_f^*(r; C, x). 
\]
It is known \cite{CKS} that $\Psi_{f}^*(L;C)$ is an invariant of $L$ under Reidemeister moves or Roseman moves for quandle colored diagrams. We will call $\Psi_f^*(L; C,x)$ the {\it shadow cocycle invariant} of $L$ associated with an $X$-coloring $C$ and the base color $x$ (see \cite{CKS}). 
Then, for a finite quandle $X$, define $\Psi^*_{f}(L)$ by the family
\begin{figure}
\includegraphics*{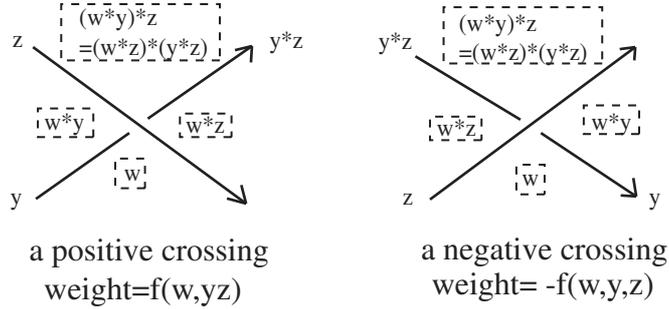}
\caption{The shadow coloring and the weight at a crossing, where $x$, $y$, $w$, etc. are the colors of arcs or regions by $C^*_x$, and $f$ is a 3-cocycle.}
\label{shadow}
\end{figure}
\begin{eqnarray*}
  &&\Psi^*_f(L)=\{\Psi^*_f(L;C,x) \mid C \in \mathrm{Col}_X(D), x \in X \}, 
\end{eqnarray*}
where $f$ is a  3-cocycle. We call $\Psi^*_f(L)$ the {\it shadow quandle cocycle invariant} of $L$ associated with $f$ \cite{CKS}.  
\\

The following lemma is useful to calculate quandle cocycle invariants; see also \cite{IK}. 

\begin{lem}\label{lem:0509-1}
Let $L$ be a classical link, and let $X$ be a quandle.
Let $f$ be a 3-cocycle, and let $C$ be an $X$-coloring of a diagram $D$ of $L$. We denote by $\mathcal{R}_z(C)$ the $X$-coloring $\mathcal{R}_z(C(\cdot)) \,:\, B(D) \rightarrow X$. 
\begin{enumerate}
\item
If $X$ is connected, then, for any $x, y \in X$,
$\Psi_f^*(L;C,x)=\Psi_f^*(L;C,y)$.
\item
For any $x,z \in X$, $\Psi_f^*(L;\mathcal{R}_z(C),x)=\Psi_f^*(L;C,x)$.
\end{enumerate}
\end{lem}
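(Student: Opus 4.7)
The plan is to deduce both identities from the standard fact that a 2-coboundary in the shadow cochain complex of $X$ pairs trivially with the 3-cycle determined by any colored link diagram; this fact is equivalent to the Reidemeister invariance of the shadow 2-cocycle invariant, and it will handle all the crossing-by-crossing bookkeeping in a uniform way.

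First I would prove (2). Using right self-distributivity together with the fact that $\mathcal{R}_z$ is a quandle endomorphism, an induction over the regions of $\mathbb{R}^{2}\setminus D$ (starting from the unbounded region) yields the pointwise identity of shadow colorings
\[
(\mathcal{R}_z(C))^*_x \;=\; \mathcal{R}_z \circ C^*_{x *^{-1} z},
\]
so that every arc color and region color in the left-hand shadow coloring is obtained by applying $\mathcal{R}_z$ to the corresponding color in the right-hand one. The weight at a crossing $r$ of $D$ under $(\mathcal{R}_z(C))^*_x$ is therefore $f(w_r * z,\, a_r * z,\, b_r * z)$, where $w_r, a_r, b_r$ are the region color, under-arc color, and over-arc color at $r$ in $C^*_{x *^{-1} z}$. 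Setting $v = z$ in the 3-cocycle identity for $f$ and writing $g(s,t) := f(s,t,z)$ gives
\[
f(s,t,u) - f(s*z,\, t*z,\, u*z) \;=\; g(s,t) + g(s*t,u) - g(s,u) - g(s*u,\, t*u),
\]
whose right-hand side is the 2-coboundary $\delta g$. Summing over the crossings of $D$, this coboundary pairs trivially with the 3-cycle represented by the shadow-colored diagram, so the total vanishes. Hence $\Psi_f^*(L;\mathcal{R}_z(C),x) = \Psi_f^*(L;C,x *^{-1} z)$, which combined with (1) applied to the inner-action orbit of $x$ gives (2).

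For (1), connectedness of $X$ means $\mathrm{Inn}(X)$ acts transitively on $X$, so it suffices to prove $\Psi_f^*(L;C,x*z) = \Psi_f^*(L;C,x)$ for each $z \in X$. A parallel computation, using self-distributivity to express each region color with base $x*z$ in terms of the corresponding region color with base $x$ (the ambient shift is right-multiplication by a $z$-transport of $z$ through the arcs traversed), rewrites the difference of weights at each crossing as a 2-coboundary $\delta h$ of an explicit 2-cochain $h$; summing over crossings again yields zero by the same cycle–coboundary pairing. The main obstacle, where I expect to spend the bulk of the effort, is verifying the coloring identity $(\mathcal{R}_z(C))^*_x = \mathcal{R}_z \circ C^*_{x *^{-1} z}$ in (2) and its base-change analog in (1), together with the careful bookkeeping of signs at positive versus negative crossings needed to identify the summed crossing contributions with a coboundary evaluated on a 3-cycle.
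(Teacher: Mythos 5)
Your proposal takes a genuinely different route from the paper's. The paper proves both parts by a geometric trick: form the split union $L\sqcup S^1$ with a small unknotted circle colored $z$ placed in the unbounded region, observe that in the split position the circle contributes nothing to $\Psi_f^*$, and then slide the circle under $D$ until it encircles $D$ (for (1)), or over $D$ and then back under to a split position (for (2)); Reidemeister invariance of the shadow invariant of the two-component link, quoted from the cited reference, then gives both identities at once, with (1) and (2) independent of each other. You replace this by the algebraic principle that a coboundary pairs to zero against the cycle carried by a shadow-colored diagram. For (2) your computation is sound: the identity $(\mathcal{R}_z(C))^*_x=\mathcal{R}_z\circ C^*_{\mathcal{R}_z^{-1}(x)}$ follows from right self-distributivity together with uniqueness of the shadow extension, $g(s,t)=f(s,t,z)$ is a legitimate degenerate $2$-cochain since $f(s,s,z)=0$, and $\delta g$ is a $3$-cocycle cohomologous to zero, so its state-sum vanishes for every shadow coloring. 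Note, however, that what this yields is $\Psi_f^*(L;\mathcal{R}_z(C),x)=\Psi_f^*(L;C,\mathcal{R}_z^{-1}(x))$, so your (2) depends on the one-step case of (1); that is logically fine (the one-step base change needs no connectedness, and $x$, $\mathcal{R}_z^{-1}(x)$ lie in one orbit), but it reverses the paper's order, where (2) is self-contained.

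The one step that does not work as written is the base-change computation in (1). You correctly note that $C^*_{x*z}(R)=C^*_x(R)*z_R$, where $z_R$ is the transport of $z$ across the arcs separating $R$ from the unbounded region (well defined by self-distributivity). But precisely because $z_R$ varies with the region, the crossingwise difference $f(w_r*z_{R_r},a_r,b_r)-f(w_r,a_r,b_r)$ is \emph{not} the coboundary of a single $2$-cochain $h$ on $X^2$: applying the cocycle identity with $t=z_{R_r}$ expresses it as $h_{\zeta}(s,u)+h_{\zeta*u}(s*u,v)-h_{\zeta}(s,v)-h_{\zeta*v}(s*v,u*v)$ with $h_\zeta(s,u)=f(s,\zeta,u)$, which is a coboundary only in the complex with coefficients in the $X$-set $X\times X$ recording the pair (region color, transported $z$). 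One must then verify that this enriched chain is still a cycle --- and that verification is exactly the observation that the pair $\bigl(C^*_x,\ z\text{-transport}\bigr)$ is the shadow coloring of $L\sqcup S^1$ with the circle pushed under $D$. So your part (1), once made precise, reconstructs the paper's auxiliary circle in algebraic form; the argument is salvageable, but the ``explicit $2$-cochain $h$'' you invoke does not exist as a cochain on $X^2$, and either the $X$-set bookkeeping or the paper's geometric shortcut is needed to close that step.
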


\begin{proof}
\begin{figure}
\includegraphics*{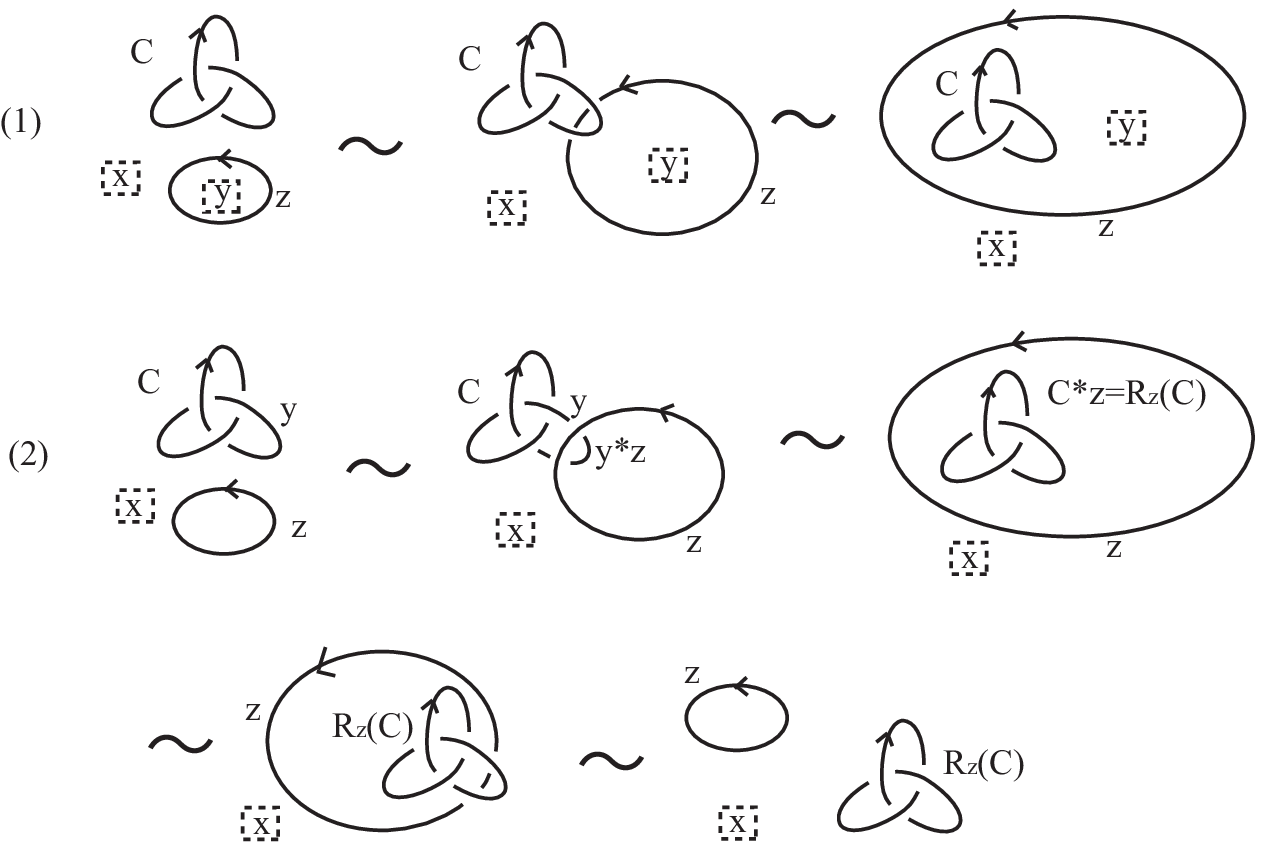}
\caption{}
\label{fig-0510-01}
\end{figure}

We say we move a circle over (resp. under) $D$ by Reidemeister moves if the moves present the transformation of the presented link which moves the trivial knot presented by the circle over (resp. under) $L$ with respect to the hight direction. 

(1) By hypothesis, it suffices to show $\Psi_f^*(L;C,x)=\Psi_f^*(L;C,y)$ for $x, y \in X$ satisfying $\mathcal{R}_z(x)=y$ or  $\mathcal{R}_z^{-1}(x)=y$ for some $z \in X$. 
For the case when $\mathcal{R}_z(x)=y$ (resp. $\mathcal{R}_z^{-1}(x)=y$), 
let us consider the split union of $D$ colored by $C$ and a circle colored by $z$ and oriented anti-clockwise (resp. clockwise), with the base color $x$; note that the region inside the circle is colored by $y$. We denote by $L \cup S^1$ the split union of $L$ and the circle, and we denote by $\tilde{C}$ the associated color. By definition, the circle does not contribute to the shadow cocycle invariant; hence, we see that $\Psi_f^*(L \cup S^1; \tilde{C}, x)=\Psi_f^*(L;C,x)$.  

Then, move the circle under $D$ by Reidemeister moves to the form surrounding $D$ as in Fig. \ref{fig-0510-01} (1). Then, $D$ is colored by $C$, and the region surrounding $D$ is colored by $y$; hence $\Psi_f^*(L \cup S^1; \tilde{C},x)=\Psi_f^*(L;C,y)$.

(2) Let us consider the split union of $D$ colored by $C$ and a circle colored by $z$ and oriented anti-clockwise, with the base color $x$. We use the same notation used in (1). We see that $\Psi_f^*(L \cup S^1; \tilde{C},x)=\Psi_f^*(L;C,x)$.

Then, move the circle over $D$ by Reidemeister moves to the form surrounding $D$, and then move the circle under $D$ by Reidemeister moves to the form of a split union of $D$ and the circle as in Fig. \ref{fig-0510-01} (2) Then, $D$ is colored by $\mathcal{R}_z(C)$, and the base color is $x$; hence $\Psi_f^*(L \cup S^1; \tilde{C},x)=\Psi_f^*(L;\mathcal{R}_z(C),x)$.
\end{proof}

\subsection{}\label{sec:5-2}
Recall that $p$ is an odd prime.
The {\it dihedral quandle} $R_p$ is a set 
$R_p=\mathbb{Z}/p\mathbb{Z}$ with a binary operation $x*y=2y-x$ for any $x,y \in R_p$. 
Note that since $\mathbb{Z}/p\mathbb{Z}$ is a field, $R_p$ is connected. 
 We call an $R_p$-coloring a {\it p-coloring}; see \cite{CJKLS, Fox, Iwakiri}. 
For a classical link $L$ or a surface link $F$, we denote by $\mathrm{Col}_p(D)$ the set of $p$-colorings for a diagram $D$ of $L$ or $F$. 
The number of the elements of $\mathrm{Col}_p(D)$ is an invariant of $L$ or $F$ \cite{CJKLS}, which will be denoted by $|\mathrm{Col}_p(L)|$ or $|\mathrm{Col}_p(F)|$.

\begin{lem}[\cite{Iwakiri}]\label{lem:linear}
For a classical link $L$ or a surface link $F$, 
the set of $p$-colorings $\mathrm{Col}_p(D)$ is a linear space over $\mathbb{Z}/p\mathbb{Z}$, where $D$ is a diagram of $L$ or $F$.
\end{lem}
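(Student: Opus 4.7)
The plan is to reformulate $p$-colorings as solutions of a homogeneous linear system over the field $\mathbb{Z}/p\mathbb{Z}$, and conclude that $\mathrm{Col}_p(D)$ is a linear subspace of a finite-dimensional vector space.

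First I would view a $p$-coloring $C: B(D) \to R_p = \mathbb{Z}/p\mathbb{Z}$ as an element of the $\mathbb{Z}/p\mathbb{Z}$-vector space $V(D) := (\mathbb{Z}/p\mathbb{Z})^{B(D)}$, which is well-defined because $B(D)$ is finite. Pointwise addition and scalar multiplication give $V(D)$ its standard linear structure.

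Next I would translate the coloring condition into linear equations. Recall that the dihedral operation is $x * y = 2y - x$. In the classical case, at a crossing $r$ where the over-arc has color $y$ and the two under-arcs have colors $x_1, x_2$ (with $x_2 = x_1 * y$), the coloring condition reads
\[
x_1 + x_2 - 2y \equiv 0 \pmod{p}.
\]
In the surface case, the analogous condition at each double-point curve of $D$ involves the colors $x_1, x_2$ of the two broken under-sheets and $y$ of the over-sheet, and again reduces to $x_1 + x_2 - 2y \equiv 0 \pmod{p}$. In either case, the condition for $C$ to be a $p$-coloring is that a finite family of linear equations on the coordinates $C(\alpha)$ ($\alpha \in B(D)$) is satisfied.

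Hence $\mathrm{Col}_p(D)$ is the kernel of a $\mathbb{Z}/p\mathbb{Z}$-linear map $V(D) \to (\mathbb{Z}/p\mathbb{Z})^N$, where $N$ is the number of crossings (resp.\ double-point curves) of $D$. This kernel is a linear subspace of $V(D)$; in particular it contains the zero coloring $C \equiv 0$, is closed under sums and under scalar multiplication by elements of $\mathbb{Z}/p\mathbb{Z}$, which gives the claim. No serious obstacle arises; the only point requiring care is verifying that the dihedral relation $x*y = 2y-x$ is genuinely linear (which is what singles out $R_p$ among quandles for this argument), and that the same linear form $x_1 + x_2 - 2y$ governs both crossings of classical link diagrams and double-point curves of surface link diagrams.
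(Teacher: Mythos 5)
Your proposal is correct and follows essentially the same route as the paper: identify the set of maps $B(D)\to\mathbb{Z}/p\mathbb{Z}$ with $(\mathbb{Z}/p\mathbb{Z})^{r}$ and observe that the coloring condition at each crossing or double point curve is the homogeneous linear equation $2y-x-z=0$, so $\mathrm{Col}_p(D)$ is the solution space of a linear system. Your remark that linearity of $x*y=2y-x$ is the special feature of $R_p$ making this work is a fair observation, but the argument itself matches the paper's.
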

We briefly review the proof.
We denote by $B(D)$ the set of arcs or broken sheets of $D$, and let $r$ be the number of elements of $B(D)$. 
 We can regard the set of all maps from $B(D)$   to $\mathbb{Z}/p\mathbb{Z}$, as a linear space $\{(x_1, \ldots, x_r) \in (\mathbb{Z}/p\mathbb{Z})^r\}$, 
where $x_i$ denotes the the image of the $i$th arc or broken sheet of $D$ ($i=1,\ldots,r$). 
Let us denote by $s$ the number of crossings or double point curves in $D$. 
Then, $\mathrm{Col}_p(D)=\{(x_1, \ldots, x_r) \in (\mathbb{Z}/p\mathbb{Z})^r \mid f_1=\ldots=f_s=0 \}$, 
where $f_j=2y-x-z$ for $x,y,z \in \{x_1,\ldots,x_r\}$ which are the colors of arcs or broken sheets around the $j$th crossing or double point curve satisfying $x*y=z$ ($j=1,\ldots,s$). 
Hence, $\mathrm{Col}_p(D)$ is isomorphic to 
a linear space $(\mathbb{Z}/p\mathbb{Z})^k$ for some integer $r-s\leq k \leq r$.
\\

Mochizuki \cite{Mochizuki} showed that for any odd prime $p$, the 3-cocycles for $R_p$ with the coefficient group $\mathbb{Z}/p \mathbb{Z}$ form a group isomorphic to $\mathbb{Z}/p \mathbb{Z}$. 
Its generator is reduced (see \cite{Asami-Satoh}) to a map 
given by 
  \[
\theta_p (s,t,u)=(s-t) ( (2u-t)^p+t^p-2u^p )/p \in 
\mathbb{Z}/p \mathbb{Z}. 
\]
We call $\theta_p$ {\it Mochizuki's 3-cocycle}, and we denote the quandle cocycle invariant (resp. shadow cocycle invariant) associated with $\theta_p$ by $\Phi_p(F)$ (resp. $\Psi^*_p(L)$).

\begin{thm}[\cite{Iwakiri}]\label{thm:0516-01}
For a surface knot $F$, 
let $k$ be a positive integer such that $|\mathrm{Col}_p(F)|=p^k$. 
Then $u(F) \geq k-1$.
\end{thm}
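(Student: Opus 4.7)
The plan is an induction on $u(F)$ built on a single \emph{one-handle lemma}: if $F^\prime$ is obtained from $F$ by a single 1-handle addition, then $\dim \mathrm{Col}_p(F^\prime) \geq \dim \mathrm{Col}_p(F) - 1$, where the dimension is taken over $\mathbb{Z}/p\mathbb{Z}$ (using Lemma \ref{lem:linear}). Granting this, I fix an unknotting sequence $F = F_0, F_1, \ldots, F_n$ with $n = u(F)$ and $F_n$ unknotted. Since 1-handle additions preserve connectedness of a connected surface, $F_n$ is again a connected surface knot, and as the boundary of a handlebody in $\mathbb{R}^3 \subset \mathbb{R}^4$ it admits a diagram with no double points. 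That diagram has a single broken sheet, so $\dim \mathrm{Col}_p(F_n) = 1$. Iterating the lemma gives $1 = \dim \mathrm{Col}_p(F_n) \geq k - n$, i.e., $u(F) \geq k-1$, as required.

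To prove the one-handle lemma, I would fix a broken-sheet diagram $D$ of $F$ and build a diagram $D^\prime$ of $F^\prime$ by inserting a cylinder connecting two small disks of $D$, put into general position so that its interaction with $D$ consists only of transverse double curves and isolated triple points. Each time the tube crosses another sheet, one new broken sheet appears --- on the tube or on the crossed sheet according to the over/under data --- together with one new coloring relation $x + z = 2y$ at the crossing, and these balance out. The only unbalanced contribution comes from the two endpoint gluings of the cylinder: one of them is absorbed into renaming the end tube segment after the corresponding sheet of $D$, and the other yields exactly one extra linear equation. Thus the total number of added equations exceeds the number of added variables by at most one, and the kernel of the defining linear system --- which is $\mathrm{Col}_p$ --- can drop in dimension by at most one.

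The main obstacle is carrying out the one-handle bookkeeping uniformly over all placements of the handle. A cleaner algebraic route would present the fundamental quandle of $F^\prime$ as the knot quandle of $F$ enlarged by one new generator --- the meridian of the core arc of the 1-handle --- modulo a single gluing relation obtained by conjugating this meridian across the over-sheets traversed by the core, which over $R_p$ becomes a single linear equation in $\mathbb{Z}/p\mathbb{Z}$. Either way, the delicate point is to verify that the defect in the resulting presentation is at most one regardless of how the 1-handle links with the rest of $F$ and whether its two endpoint disks lie in the same broken sheet.
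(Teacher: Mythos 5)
Your overall architecture is exactly the one behind the quoted result (the paper itself does not reprove Theorem \ref{thm:0516-01} — it cites \cite{Iwakiri} — but it states the mechanism explicitly after Theorem \ref{thm:0516-02} and carries out the one-handle step in detail in Lemma \ref{lem:0518-1}): $\mathrm{Col}_p$ is a $\mathbb{Z}/p\mathbb{Z}$-linear space, an unknotted surface knot has a diagram with a single broken sheet and hence $\dim\mathrm{Col}_p=1$, and each 1-handle addition imposes at most one new linear relation, giving $1\ge k-u(F)$.

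The one place where your write-up is not yet a proof is the point you yourself flag: controlling the ``defect'' of the one-handle lemma uniformly over all placements of the handle. Your general-position bookkeeping of tube/sheet intersections is workable but unnecessary, and your worry about how the handle links with $F$ is resolved by a standard fact you did not invoke: 1-handles attached to a surface link in $\mathbb{R}^4$ are trivial up to ambient isotopy rel the surface, so the pair $(F,H)$ can be deformed until the generic projection of $H$ is an embedded 3-ball in $\mathbb{R}^3$ meeting the diagram of $F$ only in the two attaching disks (this is \cite{Boyle88, HoKa79}, and is precisely how Lemma \ref{lem:0518-1} proceeds). After this normalization the handle contributes no new double curves or triple points at all; the only effect on the coloring system is the single identification $C(\alpha)=C(\beta)$ of the colors of the two sheets carrying the feet of the handle (and if both feet lie in the same broken sheet this relation is vacuous). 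Hence the solution space drops in dimension by at most one per handle, with no case analysis. With that citation inserted, your induction closes and the proof is complete.
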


For a surface link $F$, 
let us denote by $a_0(\Phi_p(F))$ the number of $0$ in the multi-set $\Phi_p(F)$.
\begin{thm}[\cite{Iwakiri}]\label{thm:0516-02}
For a surface link $F$, let $k$ be a positive integer such that 
$|Col_p(F)|=p^k$, and let $k^\prime$ be a positive integer such that 
$a_0(\Phi_p(F))<p^{k^\prime}$.
Then 
\[
\tau (F) \geq k-k^\prime+1.
\]
\end{thm}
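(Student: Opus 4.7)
The plan is to track how the quantities $|\mathrm{Col}_p(F)|$ and $a_0(\Phi_p(F))$ change under a single 1-handle addition, and then chain the resulting inequalities over a sequence of $\tau(F)$ such additions ending at a pseudo-ribbon surface link $F_0$.

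The key lemma to establish is: if $F'$ is obtained from $F$ by one 1-handle addition, there exists an injective $\mathbb{Z}/p\mathbb{Z}$-linear map $\iota\colon \mathrm{Col}_p(F') \hookrightarrow \mathrm{Col}_p(F)$ whose image has codimension $0$ or $1$, and such that $\Phi_p(F';c')=\Phi_p(F;\iota(c'))$ for every $c' \in \mathrm{Col}_p(F')$. To prove it, one chooses a diagram of $F'$ in which the cylinder $\partial D^2 \times I$ attached by the 1-handle is trivial, meaning it crosses no other broken sheet. The removed disks $d_1,d_2 \subset F$ lie on broken sheets $B_1,B_2$ of $F$, and in this diagram of $F'$ the pieces $B_1 \setminus d_1$, the cylinder, and $B_2 \setminus d_2$ constitute a single merged broken sheet $B^\ast$, while all other broken sheets and all triple points of $F$ persist unchanged. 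A $p$-coloring $c'$ of $F'$ therefore determines a $p$-coloring $c$ of $F$ with $c(B_1)=c(B_2)$ (and conversely), identifying $\mathrm{Col}_p(F')$ with the subspace $\{c \in \mathrm{Col}_p(F) : c(B_1)=c(B_2)\}$ of codimension at most $1$. Since the triple points of $F'$ together with their surrounding colors agree with those of $F$ (with $c(B_1)=c(B_2)=c'(B^\ast)$), the weight at each triple point is unchanged, so $\iota$ preserves $\Phi_p$ termwise.

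Given the key lemma, let $\tau=\tau(F)$ and fix a sequence $F=F^{(0)}\to F^{(1)}\to\cdots\to F^{(\tau)}=F_0$ of 1-handle additions with $F_0$ pseudo-ribbon. Iterating the lemma yields
\[
|\mathrm{Col}_p(F_0)| \geq |\mathrm{Col}_p(F)|/p^\tau = p^{k-\tau},
\]
while the injectivity of $\iota$ combined with preservation of $\Phi_p$ gives
\[
a_0(\Phi_p(F_0)) \leq a_0(\Phi_p(F)) < p^{k'}.
\]
Since $F_0$ admits a triple-point-free diagram, every $p$-coloring of $F_0$ contributes $0$ to $\Phi_p(F_0)$, so $a_0(\Phi_p(F_0))=|\mathrm{Col}_p(F_0)|$. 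Combining the three bounds,
\[
p^{k-\tau} \leq |\mathrm{Col}_p(F_0)| = a_0(\Phi_p(F_0)) < p^{k'},
\]
which forces $k-\tau < k'$, hence $\tau \geq k-k'+1$.

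The main obstacle is the key lemma, specifically the step of choosing a diagram in which the attached cylinder is trivial. One expects this to be possible by a preliminary ambient isotopy of $F'$ in $\mathbb{R}^4$ that shrinks the cylinder into a small ball meeting no other part of the diagram, but one must verify carefully that such a simplification is always achievable and that the resulting correspondence between the broken sheets of $F$ and $F'$, and between their triple points, is precisely the one described. Once the lemma is in hand, the counting argument above is immediate and delivers the stated inequality.
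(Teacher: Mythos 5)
Your proof is correct and is essentially the argument the paper attributes to Iwakiri and sketches via Lemma \ref{lem:0518-1}: a 1-handle addition imposes one homogeneous linear relation on the space of $p$-colorings while preserving the cocycle invariant termwise, and chaining this over $\tau(F)$ additions to a pseudo-ribbon surface link (all of whose colorings contribute $0$) yields $p^{k-\tau}\leq a_0(\Phi_p(F))<p^{k^\prime}$, hence $\tau\geq k-k^\prime+1$. The step you flag as the main obstacle --- isotoping $F^\prime$ so that the handle projects to an embedded $3$-ball meeting the diagram only in the attaching disks --- is exactly what the paper asserts in the proof of Lemma \ref{lem:0518-1}, citing \cite{Boyle88, HoKa79}, so it is legitimately available.
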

These results are shown by using the facts that 
the set of $p$-colorings can be regarded as a linear space and that a 1-handle addition implies an addition of a new relation to the linear space; see \cite{Iwakiri} (see also the proof of Theorem \ref{thm:0510-1}). In \cite{Iwakiri}, Theorem \ref{thm:0516-02} is shown for the quandle cocycle invariants associated with 3-cocycles valued in a $G_{R_p}$-module, where $G_{R_p}$ denotes the associated group of the dihedral quandle $R_p$. Note that it is known \cite{Nosaka} that any quandle cocycle invariant using the dihedral quandle $R_p$ of prime order is a scalar multiple of the quandle cocycle invariant associated with Mochizuki's 3-cocycle.

\section{Lower bounds of unknotting numbers} \label{sec:6}
In this section, we show Theorems \ref{thm:unknotting} and \ref{thm:=}. 
 
\subsection{Notations and lemmas}\label{sec:6-1}
For a quandle $X$ with a binary operation $*$,
let $\bar{*}: X \times X \to X$ be a binary operation defined by $x \bar{*} y= \mathcal{R}_y^{-1}(x)$ for $x, y \in X$; note that $\bar{*}=*$ if $X=R_p$.

For an $m$-braid $b$, let $A_b : X^m \to X^m$ be a map associated with $b$ by an $X$-coloring, i.e. $A_b$ is a map determined by $A_b=A_{b_1} \circ A_{\sigma_j^{\epsilon}}$ for a presentation $b=\sigma_j^{\epsilon}b_1$ ($j \in \{1,\ldots,m-1\}, \epsilon \in \{+1,-1\}$), where \begin{eqnarray*}
&&A_{\sigma_i}(x_1, \ldots, x_m)=(x_1, \ldots, x_{i-1}, x_{i+1}, x_i*x_{i+1}, x_{i+2}, \ldots,x_m), \text{ and} \\
&&A_{\sigma_i^{-1}}(x_1, \ldots, x_m)=(x_1, \ldots, x_{i-1}, x_{i+1}\bar{*} x_i, x_i, x_{i+2}, \ldots,x_m),  \end{eqnarray*}
for $i=1,\ldots,m-1$.
Remark that $A_b$ is well-defined and bijective.

\begin{ex}
For $R_p$ and a 2-braid $\sigma_1$, 

\[
A_{\sigma_1}=\begin{pmatrix}
0 & 1 \\
-1 & 2
\end{pmatrix}.
\]
 \end{ex}
\begin{lem}\label{lem:0501-1}
For a finite quandle $X$ and any $m$-braid $b$, there is an integer $k$ such that 
$A_b^k=I$, where $I$ denotes the identity map.
\end{lem}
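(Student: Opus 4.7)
The plan is to exploit finiteness directly. Since $X$ is a finite quandle, the product set $X^m$ is also finite, and by the statement preceding the lemma, $A_b : X^m \to X^m$ is a bijection. Hence $A_b$ is an element of the finite symmetric group $\mathrm{Sym}(X^m)$, which has order $(|X|^m)!$.

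Next, I would conclude by a standard finite-order argument. Consider the sequence of iterates $A_b, A_b^2, A_b^3, \ldots$ in $\mathrm{Sym}(X^m)$. Since this group is finite, by the pigeonhole principle there exist positive integers $i < j$ with $A_b^i = A_b^j$. Composing with $A_b^{-i}$ (which exists because $A_b$ is bijective) yields $A_b^{j-i} = I$, so one may take $k = j - i$. Equivalently, one may simply invoke Lagrange's theorem and set $k = (|X|^m)!$, which guarantees $A_b^k = I$ universally.

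There is no real obstacle here: the only ingredients are the finiteness of $X$ and the bijectivity of $A_b$, both of which are given. I would keep the proof to a few lines, remarking that the argument makes no use of the particular structure of the braid $b$ beyond the fact that $A_b$ is a well-defined bijection on a finite set.
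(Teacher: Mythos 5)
Your proof is correct and uses essentially the same idea as the paper: finiteness of $X^m$ plus bijectivity of $A_b$ force $A_b$ to have finite order. The paper phrases this pointwise (each $\mathbf{x}$ returns to itself after some number of iterations, then one takes the least common multiple over all points), while you work globally in $\mathrm{Sym}(X^m)$; this is a cosmetic difference, not a different argument.
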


\begin{proof}
Since $X^m$ consists of finite elements, 
for any $\mathbf{x} \in X^m$, there exist integers $k, n$ such that 
$A_b^{n+k}(\mathbf{x})=A_b^{n}(\mathbf{x})$. 
Since the inverse map $A^{-1}_b$exists,  
$A_b^{k}(\mathbf{x})=\mathbf{x}$. 
Let $k^\prime$ be the least common multiple of such $k$ for all $\mathbf{x} \in X^m$. 
Then $A_b^{k^\prime}=I$. 
\end{proof}

In particular, we have the following lemma. 
Recall that $l=2$ (resp. $p$) if $m$ is odd (resp. even). 

\begin{lem}\label{lem:0501-2}
For $R_p$, 
\[
A_\Delta^l=I.
\]
\end{lem}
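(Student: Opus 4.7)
The plan is to establish a closed-form formula for $A_\Delta$ and then read off $A_\Delta^l=I$ from an elementary algebraic identity. Since the dihedral operation $x*y = 2y-x$ is $\mathbb{Z}$-linear in $(x,y)$, every $A_b$ is a $\mathbb{Z}/p\mathbb{Z}$-linear automorphism of $R_p^m$. A crossing with two equal inputs produces two equal outputs, so the diagonal $V_0 := \{(c,\ldots,c) \mid c \in R_p\}$ is fixed pointwise by every $A_{\sigma_i^{\pm 1}}$, and hence by $A_\Delta$. A short direct check on generators further shows that the linear form
\[
u(\mathbf x) := \sum_{i=1}^m (-1)^i x_i
\]
is invariant under every $A_{\sigma_i}$, since the two relevant contributions $(-1)^i x_i + (-1)^{i+1} x_{i+1}$ and $(-1)^i x_{i+1} + (-1)^{i+1}(2x_{i+1}-x_i)$ are equal. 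Hence $u\circ A_b = u$ for every braid $b$. The central claim of the proof is the closed-form expression
\[
A_\Delta(\mathbf x) = (-1)^m\bigl(\mathbf x + 2u(\mathbf x)\,\mathbf 1\bigr), \qquad \mathbf 1 := (1,\ldots,1).
\]

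To prove this identity I would write $\Delta = c^m$ with $c := \sigma_1\sigma_2\cdots\sigma_{m-1}$ and compute $A_c^m$ directly. From the definition of each $A_{\sigma_i}$, the map $A_c$ is the companion-type linear map sending $(x_1,\ldots,x_m)$ to $(x_2,x_3,\ldots,x_m, \ell(\mathbf x))$, where $\ell(\mathbf x) = (-1)^{m-1}x_1 + 2\sum_{k=2}^m (-1)^{m-k}x_k$ arises from iterating the dihedral operation along the word $\sigma_1\sigma_2\cdots\sigma_{m-1}$. A straightforward induction on $k\le m$ yields a closed form for each entry of $A_c^k(\mathbf x)$ governed by a linear recurrence; specializing at $k=m$ and collecting signed sums produces the claimed expression for $A_\Delta$. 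The invariance of $u$ and the pointwise fixing of $V_0$ serve as strong consistency checks along the way.

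With the formula in hand, set $N(\mathbf x) := 2u(\mathbf x)\,\mathbf 1$, so that $A_\Delta = (-1)^m(I+N)$. A direct computation gives
\[
N^2(\mathbf x) = 2\,u\bigl(2u(\mathbf x)\,\mathbf 1\bigr)\,\mathbf 1 = 4\,u(\mathbf 1)\,u(\mathbf x)\,\mathbf 1,
\]
and $u(\mathbf 1) = \sum_{i=1}^m(-1)^i$ equals $0$ when $m$ is even and $-1$ when $m$ is odd. If $m$ is even, then $(-1)^m = 1$ and $N^2 = 0$, so $A_\Delta^p = (I+N)^p = I + pN = I$ in $\mathbb{Z}/p\mathbb{Z}$, giving $A_\Delta^l = I$ with $l = p$. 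If $m$ is odd, then $(-1)^m = -1$ and $N^2 = -2N$, so $A_\Delta^2 = (I+N)^2 = I + 2N + N^2 = I + 2N - 2N = I$, giving $A_\Delta^l = I$ with $l = 2$. The main obstacle is establishing the closed-form formula for $A_\Delta$: the recursive bookkeeping for $A_c^k$ is where essentially all the work sits, and without the auxiliary constraints (invariance of $u$ and pointwise fixing of $V_0$) it would be easy to drop a sign along the way. Once the formula is in place, the remainder of the proof is the three-line algebraic calculation above.
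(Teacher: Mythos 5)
Your proposal is correct in substance but takes a genuinely different route from the paper, and it has one real gap in execution. The paper never computes the matrix of $A_\Delta$: it invokes Lemma \ref{lem:0503-01} (which rests on \cite[Lemma 5.4]{N}) to get $A_\Delta^i(\mathbf{x})=\mathcal{R}_{\mathbf{x}}^i(\mathbf{x})$, so the whole question reduces to showing $\mathcal{R}_{\mathbf{x}}^l=\mathrm{id}$; for the dihedral quandle $\mathcal{R}_{\mathbf{x}}$ is visibly either $\mathcal{R}_y$ (an involution) when $m$ is odd, or the translation $x\mapsto x+2y$ when $m$ is even, and the lemma follows in two lines. Your closed form $A_\Delta(\mathbf{x})=(-1)^m\bigl(\mathbf{x}+2u(\mathbf{x})\,\mathbf{1}\bigr)$ is exactly that statement unwound (with $u(\mathbf{x})=\mp y$ in the paper's notation), and your endgame is clean and correct: $N^2=0$ for $m$ even gives $A_\Delta^p=I+pN=I$, and $N^2=-2N$ for $m$ odd gives $A_\Delta^2=I$; I also checked your formula against direct computation for $m=2,3$. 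What your approach buys is a self-contained linear-algebra argument over $\mathbb{Z}/p\mathbb{Z}$ that makes no appeal to the quandle-theoretic fact about full twists; what it costs is that the only genuinely nontrivial step --- the closed form for $A_\Delta=A_c^m$ --- is merely asserted via an unexecuted ``straightforward induction'' on $A_c^k$. That induction is not as routine as you suggest: the intermediate entries of $A_c^k(\mathbf{x})$ for $1<k<m$ do not follow a clean pattern (e.g.\ for $m=3$, $A_c^2(\mathbf{x})=(x_3,\,x_1-2x_2+2x_3,\,2x_1-3x_2+2x_3)$), so all the bookkeeping and sign risk is concentrated precisely in the step you skip. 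To close the gap, either carry out that induction in full (your invariants --- the linear form $u$ and the pointwise-fixed diagonal --- do constrain the answer enough to make this feasible), or simply import Lemma \ref{lem:0503-01}, which hands you $A_\Delta(\mathbf{x})=\mathcal{R}_{\mathbf{x}}(\mathbf{x})$, and hence your formula, with no companion-matrix computation at all.
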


Recall that $\mathcal{R}_y \,:\, X \rightarrow X$ 
is a map defined by $\mathcal{R}_y(x)=x*y$ for any $x,y \in X$. 
Let $\mathcal{R}_{\emptyset}=\mathrm{id}_X$. For $\mathbf{y}=(y_1, \ldots,y_k) \in X^k$, we denote $\mathcal{R}_{y_k} \circ \cdots \circ\mathcal{R}_{y_2} \circ \mathcal{R}_{y_1}$ by  $\mathcal{R}_{(y_1, y_2, \ldots, y_k)}$, 
and we denote the $i$th iterate of $\mathcal{R}_{\mathbf{y}}$ by $\mathcal{R}_{\mathbf{y}}^i$ for a positive integer $i$, and we denote a cartesian power of $\mathcal{R}_{\mathbf{y}}$ by the same symbol. 
 
For a given $X$-coloring of $\hat{b}$, let $x_i$  be the color of the $i$th initial arc of the $m$-braid $b$ ($i=1,\ldots, m$). Put $\mathbf{x}=(x_1, \ldots, x_m)$.

\begin{proof}[Proof of Lemma \ref{lem:0501-2}]
By Lemma \ref{lem:0503-01}, it suffices to show that $\mathcal{R}^l_{\mathbf{x}}=\mathrm{id}$. 
Note that we calculate in $\mathbb{Z}/p\mathbb{Z}$. 

When $m$ is odd, by calculation, we see that 
\begin{eqnarray*}
\mathcal{R}_{\mathbf{x}}(x) 
&=& 2x_m-2x_{m-1}+2x_{m-2} -\cdots +2x_1-x\\
&=& 2y-x\\
&=& x*y, 
\end{eqnarray*}
where $y=x_m-x_{m-1}+\cdots -x_2+x_1$.
Hence 
$\mathcal{R}^2_{\mathbf{x}}(x)=(x*y)*y =x$; thus $A_\Delta^2=I$.

When $m$ is even, 
\begin{eqnarray*}
\mathcal{R}_{\mathbf{x}}(x)  
&=& 2y+x,
\end{eqnarray*}
where $y=x_m-x_{m-1}+x_{m-2} -\cdots -x_1$. 
Hence
$\mathcal{R}^p_{\mathbf{x}}(x)=2py+x$, which is $x$ modulo $p$; thus $A_{\Delta}^p=I$. 
 \end{proof}
 
\begin{lem}\label{lem:0503-01}
For any positive integer $i$, 
\[
A_{\Delta}^{i}(\mathbf{x})=\mathcal{R}_{\mathbf{x}}^{i}(\mathbf{x}). 
\]
\end{lem}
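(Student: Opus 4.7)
The plan is to prove the lemma by induction on $i$, with the base case $i=1$ carrying the main computational weight.

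For the base case, I would factor $\Delta = \gamma^m$ with $\gamma = \sigma_1 \sigma_2 \cdots \sigma_{m-1}$, and first verify by direct application of the formulas for $A_{\sigma_j}$ that
\[
A_\gamma(\mathbf{x}) = (x_2, x_3, \ldots, x_m, \mathcal{R}_{\mathbf{x}}(x_1)),
\]
where idempotency $x_1 * x_1 = x_1$ is used to recognize the last entry as $\mathcal{R}_{\mathbf{x}}(x_1)$. I would then prove by induction on $k \in \{0, 1, \ldots, m\}$ that
\[
A_\gamma^k(\mathbf{x}) = \bigl(x_{k+1}, \ldots, x_m, \mathcal{R}_{\mathbf{x}}(x_1), \ldots, \mathcal{R}_{\mathbf{x}}(x_k)\bigr),
\]
so that setting $k = m$ yields the base case $A_\Delta(\mathbf{x}) = \mathcal{R}_{\mathbf{x}}(\mathbf{x})$.

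The engine of the $k$-induction is the conjugation identity
\[
\mathcal{R}_{(\psi(y_1), \ldots, \psi(y_k))} = \psi \circ \mathcal{R}_{(y_1, \ldots, y_k)} \circ \psi^{-1},
\]
valid for any quandle automorphism $\psi$ of $X$ and following at once from $\psi(a*b) = \psi(a) * \psi(b)$; note that $\mathcal{R}_{\mathbf{x}}$ is itself an automorphism since each $\mathcal{R}_{x_j}$ is. Writing $\phi = \mathcal{R}_{\mathbf{x}}$ and applying $A_\gamma$ to the tuple $(x_{k+1}, \ldots, x_m, \phi(x_1), \ldots, \phi(x_k))$ provided by the inductive hypothesis, its new last entry is $\mathcal{R}_{(x_{k+1}, \ldots, x_m, \phi(x_1), \ldots, \phi(x_k))}(x_{k+1})$. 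Using the concatenation formula $\mathcal{R}_{(\mathbf{u}, \mathbf{v})} = \mathcal{R}_{\mathbf{v}} \circ \mathcal{R}_{\mathbf{u}}$, the identity above with $\psi = \phi$, and the factorization $\phi = \mathcal{R}_{(x_{k+1}, \ldots, x_m)} \circ \mathcal{R}_{(x_1, \ldots, x_k)}$, this telescopes to $\phi(x_{k+1}) = \mathcal{R}_{\mathbf{x}}(x_{k+1})$, as required by the predicted tuple.

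For the inductive step on $i$, set $\mathbf{y} = A_\Delta^{i-1}(\mathbf{x}) = \phi^{i-1}(\mathbf{x})$ by the inductive hypothesis. The base case applied to $\mathbf{y}$ gives $A_\Delta^i(\mathbf{x}) = \mathcal{R}_{\mathbf{y}}(\mathbf{y})$, and iterating the conjugation identity yields
\[
\mathcal{R}_{\phi^{i-1}(\mathbf{x})} = \phi^{i-1} \circ \mathcal{R}_{\mathbf{x}} \circ \phi^{-(i-1)} = \phi,
\]
so $\mathcal{R}_{\mathbf{y}}(\mathbf{y}) = \phi(\mathbf{y}) = \phi^i(\mathbf{x}) = \mathcal{R}_{\mathbf{x}}^i(\mathbf{x})$, as desired. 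The principal obstacle is the algebraic bookkeeping in the $k$-induction, in which the precise interplay of the concatenation formula and the conjugation identity must be tracked; once the base case is secured, the induction on $i$ follows essentially for free.
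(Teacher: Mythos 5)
Your proof is correct and follows essentially the same route as the paper: the paper also reduces the claim to the identities $A_{\Delta}^{j-1}(\mathbf{x})=\mathbf{x}_j$ and $\mathbf{x}_j=\mathcal{R}_{\mathbf{x}}^{j-1}(\mathbf{x})$ with $\mathbf{x}_j=\mathcal{R}_{\mathbf{x}_{j-1}}(\mathbf{x}_{j-1})$, but it simply cites \cite[Lemma 5.4]{N} for them. Your argument supplies exactly the missing computation (the cyclic-shift form of $A_{\sigma_1\cdots\sigma_{m-1}}$, idempotency to identify the last entry, and the conjugation identity $\mathcal{R}_{\psi(y)}=\psi\circ\mathcal{R}_y\circ\psi^{-1}$ for the automorphism $\psi=\mathcal{R}_{\mathbf{x}}$), and it is sound.
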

\begin{proof}
Put $\mathbf{x}_1=\mathbf{x}$, and $\mathbf{x}_j=\mathcal{R}_{\mathbf{x}_{j-1}}(\mathbf{x}_{j-1})$ ($j>1$). For  $\mathbf{z}=(z_1, \ldots, z_k)$ and $\mathbf{z}^\prime=(z^\prime_1, \ldots, z^\prime_{k^\prime})$, we denote $(z_1, \ldots, z_k, z_1^\prime, \ldots, z^\prime_{k^\prime})$ by $\mathbf{z z^\prime}$.
Then, we can see that 
\begin{eqnarray*}
&& A_{\Delta}^{j-1}(\mathbf{x})=\mathbf{x}_{j}, \text{and} \\ 
&& \mathbf{x}_{j}=\mathcal{R}_{\mathbf{x_1 x_2 \cdots x_{j-1}}}(\mathbf{x})=\mathcal{R}_{\mathbf{x}}^{j-1}(\mathbf{x}),  
\end{eqnarray*}
for any integer $j>1$; see \cite[Lemma 5.4]{N}. 
Hence we have the required result.
 \end{proof}

\subsection{Proofs of Theorems \ref{thm:unknotting} and \ref{thm:=}}\label{sec:6-2}
\begin{proof}[Proof of Theorem \ref{thm:unknotting} ]
 
Since $A_{\Delta}^l=I$ by Lemma \ref{lem:0501-2}, 
$|\mathrm{Col}_p(\mathcal{S}_m(b, \Delta^{ln}))|=|\mathrm{Col}_p(\hat{b})|$. Since $\hat{b}$ is a knot, $\mathcal{S}_m(b, \Delta^{ln})$ is a $T^2$-knot. Hence 
the required result follows from Theorem \ref{thm:0516-01}.
\end{proof}
 
 \begin{proof}[Proof of Theorem \ref{thm:=} ]
Since $A_{\sigma_i}^p=I$ ($i=1,\ldots,m-1$), we can see that $A_b=I$, and hence $|\mathrm{Col}_p(\hat{b})|=p^m$; thus Theorems \ref{thm:0503-01} and \ref{thm:unknotting} imply the required result.
 \end{proof}

\subsection{Examples}\label{sec:6-3}
 
(1) For an $m$-braid
$b=\sigma_1^p \sigma_2^p \cdots \sigma_{m-1}^p$ and any integer $n$,
\[
u(\mathcal{S}_m(b, \Delta^{ln})) = m-1.
\] 

(2) For a 3-braid $b=(\sigma_1 \sigma_2^{-1})^4$ and for any integer $n$,
\[
u(\mathcal{S}_3(b, \Delta^{2n}))=2. 
\]

\begin{proof}
(2)
For $R_3$, since $A_{\sigma_1\sigma_2^{-1}}=\begin{pmatrix}
0 & 1 & 0 \\
-2 & 4 & -1\\
-1 & 2 & 0
\end{pmatrix}=
\begin{pmatrix}
0 & 1 & 0 \\
1 & 1 & -1\\
-1 & -1 & 0
\end{pmatrix}$, 
$A_b=A_{\sigma_1\sigma_2^{-1}}^4=\mathrm{I}$.
\end{proof} 
 
\section{Triple point cancelling numbers}
\label{sec:7}
In this section, we show Theorems \ref{thm:0510-3}, \ref{thm:0510-1} and \ref{cor:0510-2}, and Corollary \ref{cor:0122-01}. 
In Section \ref{sec:7-1}, we calculate quandle cocycle invariants associated with Mochizuki's 3-cocycle (Theorem \ref{thm:0505-02}), and we show Theorem \ref{thm:0510-3}. In Section \ref{sec:7-2}, we calculate the quandle cocycle invariants for the torus-covering knots as in Theorem \ref{thm:=} (Theorem \ref{thm:0505-03}), and we show Theorems \ref{thm:0510-1} and \ref{cor:0510-2}, and Corollary \ref{cor:0122-01}. Section \ref{sec:7-3} is devoted to prove lemmas.

\subsection{Calculation of quandle cocycle invariants}\label{sec:7-1}
We use the notations given in Section \ref{sec:6-1}. 
Recall that 
for a given $p$-coloring of $\hat{b}$, 
$x_i$ is the color of the $i$th initial arc of the $m$-braid $b$ ($i=1,\ldots, m$).

\begin{thm}\label{thm:0505-02}
Let $b$ be an $m$-braid and let $n$ be an integer. Assume that $A_{\Delta}^n=I$. 
Then
\[
\Phi_{p}(\mathcal{S}_m(b,\Delta^n))=\{\, -mn\Psi_p^*(\hat{b}; C,0) \mid C \in \mathrm{Col}_p(\hat{b}) \}.
\]
\end{thm}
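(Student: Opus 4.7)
The plan is to compute $\Phi_p(\mathcal{S}_m(b, \Delta^n); \tilde{C})$ for each $p$-coloring $\tilde{C}$ directly from a broken-surface diagram built from the $m$-chart $\Gamma_m(b, \Delta^n)$, and then match the resulting sum against the shadow cocycle invariant of $\hat b$. The first step is to identify the $p$-colorings of the surface knot. Under the hypothesis $A_\Delta^n = I$, the longitudinal monodromy on $R_p^m$ is trivial, so a $p$-coloring is determined by its restriction to a meridional slice, yielding a natural bijection $\mathrm{Col}_p(\mathcal{S}_m(b, \Delta^n)) \cong \mathrm{Col}_p(\hat b)$; this is the same bijection implicitly used in the proof of Theorem \ref{thm:unknotting}.

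Next I would enumerate triple points. In a broken-surface diagram obtained from $\Gamma_m(b, \Delta^n)$, triple points correspond bijectively to white vertices of the chart. The chart $\Gamma_m(b, \Delta^n)$ in Figure \ref{2013-0121-01} is built from meridional edges, one per letter of $b$, and longitudinal edges, one per letter in a fixed factored presentation of $\Delta^n$. White vertices appear only where these two families cross and their labels are adjacent, giving, over each letter $\sigma_j^\epsilon$ of $b$, a well-defined finite collection of triple points, one for each letter of $\Delta^n$ whose label is compatible with $j$.

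Then I would compute the Mochizuki weight at each triple point. For a $p$-coloring $C \in \mathrm{Col}_p(\hat b)$, the three sheets meeting at a triple point over a letter $\sigma_j^\epsilon$ of $b$ are colored by: the pair $(x, y)$ of colors appearing at the corresponding crossing of $\hat b$, and a third-sheet color $z_k$ obtained by applying the first $k$ letters of $\Delta^n$ to the initial meridional colors. By Lemma \ref{lem:0503-01} these third-sheet colors are precisely entries of the iterates $\mathcal{R}_{\mathbf{x}}^i(\mathbf{x})$, so as one traverses the chart longitudinally they cycle through an explicit finite list. Summing the weights $\pm\theta_p(x, y, z_k)$ over the triple points lying over a single letter $\sigma_j^\epsilon$ should, after invoking the cocycle identities of $\theta_p$ and using Lemma \ref{lem:0509-1} to shift the shadow base color to $0$, yield a constant multiple of the single shadow weight $W^*_{\theta_p}(r; C, 0)$ at the corresponding crossing $r$ of $\hat b$. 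Summing over all letters of $b$ then gives the claimed identity.

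The main obstacle is the third step: showing that the sum of $\theta_p(x, y, z_k)$ over the entire orbit of third-sheet colors contributed by $\Delta^n$ collapses to exactly $-mn$ times a single shadow weight, with the correct sign and base color $0$. This reduction rests on the specific algebraic identities satisfied by Mochizuki's $3$-cocycle (in particular its expression through the mod-$p$ lift of the $p$-th power) together with careful orientation and sign bookkeeping. It is here that the full-twist structure $\Delta = (\sigma_1 \cdots \sigma_{m-1})^m$ and the hypothesis $A_\Delta^n = I$ must be used in an essential way, since these are what force the third-sheet colors to traverse a complete orbit whose total $\theta_p$-contribution telescopes to a scalar multiple of a single shadow weight.
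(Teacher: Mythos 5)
Your setup is right --- the bijection $\mathrm{Col}_p(\mathcal{S}_m(b,\Delta^n))\cong\mathrm{Col}_p(\hat b)$ under $A_\Delta^n=I$, and the identification of triple points with interactions between the letters of $b$ and the letters of $\Delta^n$ --- but the step you yourself flag as ``the main obstacle'' is exactly where the content of the theorem lives, and your sketch of it is both incomplete and, as stated, incorrect. The paper does not carry out this computation from scratch: it quotes a formula from \cite{N} expressing $\Phi_p(\mathcal{S}_m(b,\Delta^n);C)$ as
\[
\Phi_{f_p}(\hat b;C)\;-\;\sum_{i=1}^m\sum_{j=1}^n\Psi_p^*\bigl(\hat b;\,\mathcal{R}_{\mathbf{x}}^{\,j-1}(C),\,\mathcal{R}_{\mathbf{x}}^{\,j-1}(x_i)\bigr),
\]
where $f_p$ is a $2$-cocycle of $R_p$ built from $\theta_p$ and the tuple $\mathbf{y}=\mathbf{x}_1\cdots\mathbf{x}_n$ via the formula (\ref{0901-3}). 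Your proposal omits the first term entirely. That term is a genuine contribution of the triple points and it does not vanish for formal reasons: the paper kills it by invoking Mochizuki's computation that $H^2(R_p;\mathbb{Z}/p\mathbb{Z})=0$, so that $f_p$ is a coboundary and $\Phi_{f_p}(\hat b;C)=0$ (with an explicit alternative via Lemmas \ref{lem:0502-4} and \ref{lem:0505-01} when $mn$ is even). Without accounting for this term, the sum of Mochizuki weights does not reduce to shadow weights alone, and the claimed coefficient $-mn$ cannot be obtained.

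There is a second problem with your step three as written: you assert that the weights of the triple points lying over a \emph{single} letter $\sigma_j^\epsilon$ of $b$ collapse to a constant multiple of the single shadow weight $W^*_{\theta_p}(r;C,0)$ at the corresponding crossing $r$ of $\hat b$. Lemma \ref{lem:0509-1} cannot deliver this: both of its statements are proved by moving an auxiliary circle over or under the whole diagram by Reidemeister moves, so they only equate the \emph{total} shadow invariants $\Psi_p^*(\hat b;C,x)=\Psi_p^*(\hat b;C,0)$ and $\Psi_p^*(\hat b;\mathcal{R}_z(C),0)=\Psi_p^*(\hat b;C,0)$, not the individual crossing weights. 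The correct bookkeeping is global: the $mn$ terms $\Psi_p^*(\hat b;\mathcal{R}_{\mathbf{x}}^{\,j-1}(C),\mathcal{R}_{\mathbf{x}}^{\,j-1}(x_i))$ are each full shadow invariants (sums over all crossings of $\hat b$), and only after applying Lemma \ref{lem:0509-1} to each whole invariant do they all equal $\Psi_p^*(\hat b;C,0)$, giving the factor $mn$. Finally, your argument as sketched treats only $n\geq 0$; the paper handles $n<0$ separately by the additivity $\Phi_p(\mathcal{S}_m(b,\Delta^{n});C)+\Phi_p(\mathcal{S}_m(b,\Delta^{-n});C)=0$. To make your approach work you would need to either reprove the decomposition formula from \cite{N} or cite it, and then supply the $H^2$ vanishing argument; as it stands the proposal has a genuine gap at its central step.
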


\begin{proof}
We use the notations given in Section \ref{sec:6-1}. 
Recall that $\mathbf{x}=(x_1, \ldots, x_m)$, and $\mathbf{x}_1=\mathbf{x}$, and $\mathbf{x}_j=\mathcal{R}_{\mathbf{x}_{j-1}}(\mathbf{x}_{j-1})$ ($j>1$). 

Let us consider the case when $n \geq 0$. The assumption $A_{\Delta}^n=I$ implies that for any $C \in \mathrm{Col}_p(\hat{b})$, $\mathcal{R}_\mathbf{y}=\mathrm{id}_{R_p}$, where $\mathbf{y}=\mathbf{x}_1 \mathbf{x}_2 \cdots \mathbf{x}_n$.
Hence, by \cite{N}, the quandle cocycle invariant of $\mathcal{S}_m(b, \Delta^{n})$ associated with Mochizuki's 3-cocycle $\theta_p$ is presented by 
\begin{equation*} 
\Phi_p(\mathcal{S}_m(b, \Delta^{n}))=\{
\Phi_{f_p}(\hat{b}; C) -\sum_{i=1}^m \sum_{j=1}^n 
\Psi_p^*(\hat{b}; \mathcal{R}_{\mathbf{x}}^{j-1} (C), \mathcal{R}_{\mathbf{x}}^{j-1} (x_i)) \mid C \in \mathrm{Col}_p(\hat{b})\}, 
\end{equation*}
where 
$\Phi_{f_p}(\hat{b}; C)$ is the quandle cocycle invariant of $\hat{b}$,  
and $\Psi_p^*(\hat{b}; C, x)$ is the shadow cocycle invariant of $\hat{b}$. 
Here 
$\mathbf{x}$ is determined from $C$ and $b$,  
and $f_p$ is the 2-cocycle determined from $\theta_p$ and $\mathbf{y}=(y_1, \ldots,y_{k})$ by \begin{equation} \label{0901-3}
f_p(s,t)=\sum_{j=1}^k 
\theta_p(\mathcal{R}_{(y_1, \ldots, y_{j-1})}(s), \mathcal{R}_{(y_1, \ldots, y_{j-1})}(t), y_j),  
\end{equation} 
where $k=mn$. 
It is known \cite[Corollary 2.5]{Mochizuki} that $H^2(R_p; \mathbb{Z}/p\mathbb{Z})$ vanishes, and hence we can see that $f_p$ is a coboundary. This implies
that $\Phi_{f_p}(\hat{b};C)=0$ for any $C$ (\cite{CJKLS}). 
 Since $R_p$ is connected,  $\Psi_p^*(\hat{b};C,x)=\Psi_p^*(\hat{b};C,0)$ for any $x \in R_p$ by Lemma \ref{lem:0509-1} (1). 
Further, it follows from Lemma \ref{lem:0509-1} (2) that $\Psi_p^*(\hat{b}; \mathcal{R}_\mathbf{x}(C),0)=\Psi_p^*(\hat{b}; C,0)$. Thus we have the result.

Let us consider the case when $n<0$. 
Since $\Phi_p(\mathcal{S}_m(b, \Delta^{n}); C)+\Phi_p(\mathcal{S}_m(b, \Delta^{-n}); C)
=\Phi_p(\mathcal{S}_m(b, \Delta^{n}\Delta^{-n}); C)=\Phi_p(\mathcal{S}_m(b, e); C)=0$ for any $p$-coloring $C$ (see \cite{N}: in order to calculate $\Phi_p(\mathcal{S}_m(b, \Delta^{n}); C)$, we take the sum of the weights of the triple points presented by the transformation $b \Delta^n \to \Delta^n b$), $\Phi_p(\mathcal{S}_m(b, \Delta^{n}); C)=-\Phi_p(\mathcal{S}_m(b, \Delta^{-n}); C)$ for any $C$; thus we have the required result.
\end{proof}

From now on, we use Theorem \ref{thm:0505-02} for the case when $m$ or $n$ is even. For this case, we can check  $\Phi_{f_p}(\hat{b};C)=0$ for any $C$, by Lemmas \ref{lem:0502-4} and \ref{lem:0505-01}.

\begin{proof}[Proof of Theorem \ref{thm:0510-3}]
By Lemma \ref{lem:0501-2}, we see that $A_{\Delta}^{ln}=I$ for any integer $n$. 
By Theorem \ref{thm:0505-02}, for an $m$-braid $b$ ($m>0$) and an integer $n$, 
\[
\Phi_{p}(\mathcal{S}_m(b,\Delta^{ln}))=\{\, -lmn\Psi_p^*(\hat{b}; C,0) \mid C \in \mathrm{Col}_p(\hat{b}) \}. 
\]
Hence, for a positive odd integer $m$, an $m$-braid $b$ and an integer $n$, 
\[
\Phi_{p}(\mathcal{S}_m(b,\Delta^{2n}))=\{\, -2mn\Psi_p^*(\hat{b}; C,0) \mid C \in \mathrm{Col}_p(\hat{b}) \}. 
\]
Since $\Psi_p^*(\hat{b})$ consists of $p$ copies of $\Psi_p^*(\hat{b}; C,0)$ for each $C$ by Lemma \ref{lem:0509-1} (1), for a positive odd integer $m$ with $m \not \equiv 0 \pmod{p}$ and an integer $n$ with $n \not \equiv 0 \pmod{p}$, it holds true that $a_0(\Phi_{p}(\mathcal{S}_m(b,\Delta^{2n})))=a_0(\Psi_p^*(\hat{b}))/p$.
Hence the required result follows from Theorem \ref{thm:0516-02}.
\end{proof}

\subsection{Proofs of Theorems \ref{thm:0510-1} and \ref{cor:0510-2}, and Corollary \ref{cor:0122-01} }\label{sec:7-2}
\begin{thm}\label{thm:0505-03}
Let $b$ be an $m$-braid as in Theorem \ref{thm:=}.
Then 
\[
\Phi_p(\mathcal{S}_m(b,\Delta^{2n})=\{\, \underbrace{2mn\sum_{i=1}^{m-1} \nu_i y_i^2, \ldots,2mn\sum_{i=1}^{m-1} \nu_i y_i^2}_{\text{$p$ copies}} \mid y_1, \ldots,y_{m-1} \in \mathbb{Z}/p\mathbb{Z} \}.
\]
\end{thm}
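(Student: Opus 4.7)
The plan is to apply Theorem~\ref{thm:0505-02} and then compute the shadow quandle cocycle invariant $\Psi^*_p(\hat b;C,0)$ by a block-by-block analysis of $b$. Since $b\in\langle\sigma_1^p,\dots,\sigma_{m-1}^p\rangle$ and $A_{\sigma_i^p}=I$ on $R_p^m$ (as noted in the proof of Theorem~\ref{thm:=}), one has $A_b=I$, so the $p$-colorings of $\hat b$ are parametrized freely by $(x_1,\dots,x_m)\in(\mathbb Z/p\mathbb Z)^m$. Moreover $A_{\Delta}^{2n}=I$: for $m$ odd this is Lemma~\ref{lem:0501-2} with $l=2$, and the $m$ even case of the statement reduces modulo $p$ to a tautology. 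Applying Theorem~\ref{thm:0505-02} with exponent $2n$ yields
\[
\Phi_p(\mathcal{S}_m(b,\Delta^{2n}))=\{\,-2mn\,\Psi^*_p(\hat b;C,0)\mid C\in\mathrm{Col}_p(\hat b)\,\}.
\]

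The heart of the proof is the identity
\[
\Psi^*_p(\hat b;(x_1,\dots,x_m),0)\equiv -\sum_{i=1}^{m-1}\nu_i y_i^2\pmod p,\qquad y_i=x_{i+1}-x_i.
\]
I would fix a word $b=\sigma_{j_1}^{p\epsilon_1}\cdots\sigma_{j_N}^{p\epsilon_N}$ and decompose the shadow invariant additively over crossings as $\Psi^*_p(\hat b;C,0)=\sum_{k=1}^{N}\Psi^{(k)}$, where $\Psi^{(k)}$ collects the Mochizuki weights at the $p$ crossings of the $k$-th block. Within a block $\sigma_i^{p\epsilon}$ with input colors $x_i,x_{i+1}$ and $y=y_i$, the identity $A_{\sigma_i^p}=I$ forces the under- and over-arc colors at the $\ell$-th crossing ($\ell=0,\dots,p-1$) to form the arithmetic progressions $s_\ell=x_i+\ell y$, $t_\ell=x_i+(\ell+1)y$, while the shadow color of the region between the two interacting strands advances by $2y$ at each crossing. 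Plugging these into Mochizuki's formula $\theta_p(s,t,u)=(s-t)\bigl((2u-t)^p+t^p-2u^p\bigr)/p$ and summing over $\ell$, I expect the result to reduce modulo $p$ to $\Psi^{(k)}\equiv -\epsilon_k y_{j_k}^2$. Summing over $k$ and collecting by the index $i$ in $\nu_i=\sum_{k:j_k=i}\epsilon_k\pmod p$ then gives the displayed formula.

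Substituting back, $\Phi_p(\mathcal{S}_m(b,\Delta^{2n}))=\{\,2mn\sum_i\nu_i y_i^2\mid(x_1,\dots,x_m)\in(\mathbb Z/p)^m\,\}$. Since the right-hand side depends only on $(y_1,\dots,y_{m-1})$ and the change of variables $(x_1,\dots,x_m)\mapsto(x_1,y_1,\dots,y_{m-1})$ is a bijection of $(\mathbb Z/p)^m$, each value $2mn\sum_i\nu_i y_i^2$ occurs exactly $p$ times as $x_1$ varies, yielding the ``$p$ copies'' in the multi-set.

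The main obstacle will be the per-block evaluation $\Psi^{(k)}\equiv -\epsilon_k y_{j_k}^2\pmod p$. Expanding $(2u_\ell-t_\ell)^p$ and $u_\ell^p$ via the binomial theorem modulo $p^2$, using $\binom{p}{j}/p\equiv(-1)^{j-1}/j\pmod p$ for $1\le j\le p-1$ together with Fermat's little theorem, a priori produces many terms depending on the background shadow color, which itself depends on $x_1,\dots,x_{j_k-1}$ and on the preceding blocks. The technical task is to show that those background-dependent contributions cancel modulo $p$ after the $\ell$-sum; this should follow from the arithmetic-progression structure of both the arc and region colors across the block, which is itself a consequence of $A_{\sigma_i^p}=I$. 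Once this telescoping cancellation is in hand, the theorem follows at once from the first-paragraph reduction.
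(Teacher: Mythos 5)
Your overall reduction is the same as the paper's: apply Theorem~\ref{thm:0505-02} to get $\Phi_p(\mathcal{S}_m(b,\Delta^{2n}))=\{-2mn\Psi^*_p(\hat b;C,0)\}$, observe that $A_b=I$ so the colorings are parametrized by $(x_1,\dots,x_m)\in(\mathbb Z/p\mathbb Z)^m$, and obtain the ``$p$ copies'' from the bijection $(x_1,\dots,x_m)\mapsto(x_1,y_1,\dots,y_{m-1})$. The gap is in the heart of your argument: the identity $\Psi^*_p(\hat b;C,0)\equiv-\sum_i\nu_i y_i^2$ is never established. You decompose the invariant into per-block contributions $\Psi^{(k)}$ and then write that you ``expect'' $\Psi^{(k)}\equiv-\epsilon_k y_{j_k}^2$, explicitly flagging the binomial-expansion computation and the cancellation of the background-dependent terms as ``the main obstacle.'' That obstacle is precisely the content of the theorem; as written, the central step is a conjecture, not a proof. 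The paper avoids this computation entirely by quoting the known result of Asami--Satoh that $\Psi_p^*(\widehat{\sigma_i^p};C,0)=-(x'_i-x'_{i+1})^2$, and then using $A_{\sigma_i^p}=I$ (so that every block sees the same input colors and the same region colors, making the shadow invariant additive over blocks with multiplicities $\nu_i$). Your block decomposition is exactly this additivity, so the fix is either to cite that computation or to actually carry out the mod-$p^2$ binomial analysis you sketch; until one of these is done the proof is incomplete.

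A secondary error: your assertion that $A_\Delta^{2n}=I$ for $m$ even ``reduces modulo $p$ to a tautology'' is false. For $m$ even, Lemma~\ref{lem:0501-2} gives only $A_\Delta^p=I$; indeed $\mathcal R_{\mathbf x}(x)=2y+x$ with $y$ an alternating sum of the $x_i$, so $A_\Delta^{2}(\mathbf x)$ shifts each coordinate by $4y$, which is nonzero for generic colorings. The hypothesis $A_\Delta^{2n}=I$ needed to invoke Theorem~\ref{thm:0505-02} genuinely requires $m$ odd (which is the setting in which the paper applies this theorem, via the proof of Theorem~\ref{thm:0510-3}); you should either restrict to $m$ odd or explain why the colorings of $\mathcal S_m(b,\Delta^{2n})$ still coincide with those of $\hat b$ in your setting.
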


\begin{proof}
By the proof of Theorem \ref{thm:0510-3}, we have 
\[
\Phi_p(\mathcal{S}_m(b,\Delta^{2n})) =\{-2mn\Psi_p^*(\hat{b};C,0) \mid C \in \mathrm{Col}_p(\hat{b}) \}.
\]
We calculate $\Psi_p^*(\hat{b};C,0)$. 
It is known \cite{Asami-Satoh} that for an $m$-braid $c_i=\sigma_i^p$, $\Psi_p^*(\hat{c_i}; C, 0)=-(x^\prime_{i}-x^\prime_{i+1})^2$, where 
$x^\prime_{i}, x^\prime_{i+1}$ are the colors of the $i$th and $(i+1)$th initial arcs of $c_i$ ($i=1,\ldots,m-1$).
Since $A_{c_i}=I$, it follows that $\Psi_p^*(\hat{b}; C, 0)=\sum_{i=1}^{m-1} \nu_i \Psi_p^*(\hat{c_i}; C, 0)=-\sum_{i=1}^{m-1} \nu_i(x_i-x_{i+1})^2$; recall that 
$x_{i}$ is the color of the $i$th initial arc of the braid $b$ ($i=1,\ldots,m-1$). 
Note that $A_b=A_{\Delta}^{2n}=I$, and hence each $p$-coloring for a diagram of $\hat{b}$ or $\mathcal{S}_m(b,\Delta^{2n})$ is presented by $(x_1, \ldots, x_m)$.
Thus
\begin{eqnarray*}
\Phi_p(\mathcal{S}_m(b,\Delta^{2n}))&=&\{\, 2mn\sum_{i=1}^{m-1} \nu_i (x_i-x_{i+1})^2 \mid x_1, \ldots,x_m \in \mathbb{Z}/p\mathbb{Z} \}\\
&=&\{\, \underbrace{2mn\sum_{i=1}^{m-1} \nu_i y_i^2, \ldots,2mn\sum_{i=1}^{m-1} \nu_i y_i^2}_{p} \mid y_1, \ldots,y_{m-1} \in \mathbb{Z}/p\mathbb{Z} \}.
\end{eqnarray*}
\end{proof}

\begin{proof}[Proof of Theorem \ref{thm:0510-1}]
Let $k$ be an integer with $0 \leq k < (m-1)/2$. Let us denote by $F$ the torus-covering knot given in this theorem, and let us denote by $F^\prime$ the result of $k$ 1-handle additions to $F$. 

We show that $F^\prime$ is not pseudo-ribbon. 
By Theorem \ref{thm:0505-03}, we see that 
$\Phi_p(F)$ 
consists of copies of $2mn\sum_{i=1}^{m-1} \nu_i y_i^2$, where $y_1,\ldots,y_{m-1} \in \mathbb{Z}/p\mathbb{Z}$.
As we see in Lemma \ref{lem:0518-1}, additions of $k$ 1-handles imply that $y_1, \ldots, y_{m-1}$ satisfy $f_1=\ldots=f_k=0$, where $f_j$ ($j=1,\ldots,k$) is a linear combination of  $y_1, \ldots, y_{m-1}$ over  $\mathbb{Z}/p\mathbb{Z}$. Since $\mathbb{Z}/p\mathbb{Z}$ is a field, by changing the indices $i$ of $y_i$ if necessary, we can assume that $\Phi_p(F^\prime)$ 
consists of copies of $2mn \sum_{i=1}^{m-1} \nu_i y_i^2$, where 
$y_1, \ldots, y_{m-1}$ are elements of $\mathbb{Z}/p\mathbb{Z}$ satisfying 
\begin{equation}\label{eq:0518-4}
\begin{pmatrix} y_1 \cr \vdots \cr y_{k^\prime}
\end{pmatrix}=A
\begin{pmatrix}
y_{k^\prime+1} \cr \vdots \cr y_{m-1},
\end{pmatrix}
\end{equation}
where $k^\prime$ is an integer with $0 \leq k^\prime \leq k$, and $A$ is a $k^\prime \times (m-k^\prime-1)$-matrix over $\mathbb{Z}/p\mathbb{Z}$. Put  $l^\prime=m-k^\prime-1$. 
Put $A=(a_{ij})$.
Let $A^\prime=(\sqrt{\nu_i}a_{ij})$, where $1 \leq i \leq k^\prime, 1 \leq j \leq l^\prime$ and $\sqrt{\nu_i}$ is a non-zero element of $\mathbb{Z}/p\mathbb{Z}$ satisfying $\sqrt{\nu_i}^2=\nu_i$ ($i=1,\ldots,k^\prime$); note that since $\nu_i \in \{ x^2 \mid x \in \mathbb{Z}/p\mathbb{Z} \}-\{0\}$,  $\sqrt{\nu_i}$ exists ($i=1,\ldots,k^\prime$).
Let us denote by $\mathbf{a}_j$ the $j$th column of $A^\prime$ ($j=1,\ldots,l^\prime$).

Assume that $F^\prime$ is pseudo-ribbon. Then the quandle cocycle invariant $\Phi_p(F^\prime)$ consists of copies of zero. 
Since $2mn \not\equiv 0 \pmod{p}$, this implies that
\begin{equation}\label{eq:0518-3}
\sum_{i=1}^{m-1} \nu_i y_i^2=0, 
\end{equation}
where $y_1, \ldots, y_{m-1}$ are elements of $\mathbb{Z}/p\mathbb{Z}$ satisfying (\ref{eq:0518-4}).
This implies that 
\begin{eqnarray}
&& \| \mathbf{a}_j \|^2+\nu_{k^\prime+j}=0, \text{ and} \label{eq:0518-1}\\
&& (\mathbf{a}_j, \mathbf{a}_{j^\prime}) =0,  \label{eq:0518-2}
\end{eqnarray}
for $j,j^\prime \in \{1,\ldots, l^\prime\}$, 
where $\| \cdot \|$ denotes the norm and $(\cdot, \cdot)$ denotes the inner product defined by $(\mathbf{a}, \mathbf{b}) := \mathbf{a}^T \mathbf{b}$ for $\mathbf{a}, \mathbf{b} \in (\mathbb{Z}/p\mathbb{Z})^{k^\prime}$. 
 The equation (\ref{eq:0518-1}) is obtained from (\ref{eq:0518-3}) by substituting $y_{k^\prime+i^\prime}=1$ if $i^\prime=j$ and zero otherwise, where $i^\prime \in \{1,\ldots, l^\prime\}$. By substituting $y_{k^\prime+i^\prime}=1$ if $i^\prime=j$ or $j^\prime$ and zero otherwise ($i^\prime \in \{1,\ldots, l^\prime\}$) to (\ref{eq:0518-3}), together with (\ref{eq:0518-1}) we have $2(\mathbf{a}_j, \mathbf{a}_{j^\prime}) =0$, and hence $2 \not\equiv 0 \pmod{p}$ implies (\ref{eq:0518-2}).
Since $\nu_{k^\prime+j} \neq 0$ ($j=1,\ldots,l^\prime$), it follows from  (\ref{eq:0518-1}) that $\mathbf{a}_j \neq \mathbf{0}$ for $j=1,\ldots,l^\prime$. 
Hence $\mathbf{a}_1, \ldots, \mathbf{a}_{l^\prime}$ are mutually orthogonal non-zero vectors in $(\mathbb{Z}/p\mathbb{Z})^{k^\prime}$ by (\ref{eq:0518-2}). 
Thus $l^\prime \leq k^\prime$. 
However, $l^\prime=m-k^\prime-1\geq m-k-1>(m-1)/2>k\geq k^\prime$. 
This is a contradiction, and hence $F^\prime$ is not pseudo-ribbon. 

We have shown that $F$ cannot be transformed to be pseudo-ribbon by $k$ 1-handle additions for $0 \leq k < (m-1)/2$. This implies that at least $(m-1)/2$ 1-handle additions are necessary to transform $F$ to be pseudo-ribbon; thus $\tau(F)  \geq  (m-1)/2$.
\end{proof}

\begin{rem}\label{rem:1}
Theorem \ref{thm:0510-1} gives a better estimate than Theorem \ref{thm:0510-3}. 
For example, let us consider $F=\mathcal{S}_5(b, \Delta^2)$, where $b=\sigma_1^3 \sigma_2^3 \sigma_3^3 \sigma_4^3$, and let us take $p=3$. Theorem \ref{thm:0510-1} implies that $\tau(F) \geq 2$. 

In order to apply Theorem \ref{thm:0510-3}, we 
calculate $a_0(\Psi_3^*(\hat{b}))$ as follows. 
By the proofs of Theorems \ref{thm:0510-3} and \ref{thm:0505-03}, the quandle cocycle invariant $\Psi_3^*(\hat{b})$ consists of $3^2$ copies of 
$2\sum_{i=1}^{4} y_i^2$, where $y_1, \ldots,y_{4} \in \mathbb{Z}/3\mathbb{Z}$; note that it consists of $3$ copies of the quandle cocycle invariant $\Phi_3(F)$.
Since $\mathbb{Z}/3\mathbb{Z}=\{0, \pm 1\}$, 
if $2\sum_{i=1}^{4} y_i^2=0 \pmod{3}$, then 
(1) $\{y_1^2, \ldots, y_4^2\}=\{0,0,0,0\}$, or 
(2) $\{y_1^2, \ldots, y_4^2\}=\{0,1,1,1\}$.
The number of the 4-tuples $(y_1, \ldots, y_4)$ satisfying (1) (resp. (2)) is one (resp. $4 \cdot 2^3=32$). Thus $a_0(\Psi_3^*(\hat{b}))=3^2(1+32)=3^2 \cdot 33$, and hence $3^5<a_0(\Psi_3^*(\hat{b}))<3^6$. 
Since $|\mathrm{Col}_3(\hat{b})|=3^5$, it follows from Theorem \ref{thm:0510-3} that $\tau(F) \geq 5-6+2=1$, but we cannot induce $\tau(F) \geq 2$ from Theorem \ref{thm:0510-3}.  
\end{rem}

\begin{proof}[Proof of Theorem \ref{cor:0510-2}]
Let us denote by $F$ the torus-covering knot given in this theorem.
We show that $\tau(F) \geq 2$, as follows. By Theorem \ref{thm:0505-03}, $\Phi_p(F)$ consists of copies of $6n\sum_{i=1}^2 \nu_i y_i^2$, where $y_1, y_2 \in \mathbb{Z}/p\mathbb{Z}$. If $\tau(F) \leq 1$, then $F$ is transformed to be pseudo-ribbon by a 1-handle addition, and by the same argument as in the proof of Theorem \ref{thm:0510-1}, we see that then one linear relation $a_1 y_1+a_2 y_2=0$ for some $a_1, a_2 \in \mathbb{Z}/p\mathbb{Z}$ 
induces $6n\sum_{i=1}^2 \nu_i y_i^2=0$, and hence $\sum_{i=1}^2 \nu_i y_i^2=0$, where $y_1, y_2 \in \mathbb{Z}/p\mathbb{Z}$; note the assumption that $p\neq 3$ and $n \not\equiv 0 \pmod{p}$. 
The assumption $p^\prime \neq 2$ implies that if $\sum_{i=1}^2 \nu_i y_i^2=0$, then $y_1=y_2=0$; thus at least two relations are necessary to induce $\sum_{i=1}^2 \nu_i y_i^2=0$. This is a contradiction. Thus $\tau(F) \geq 2$, and hence Theorem \ref{thm:=} implies $\tau(F)=u(F)=2$.
\end{proof} 

\begin{proof}[Proof of Corollary \ref{cor:0122-01}]
By the proof of Theorem \ref{cor:0510-2}, since we assume $\nu_i \in \{j^2 \mid j \in \mathbb{Z}/p \mathbb{Z}\}-\{0\}$ ($i=1,2$), it suffices to show that the congruence $x^2+y^2 \equiv 0 \pmod{p}$ does not have a non-zero solution. 
We use an elementary knowledge of number theory; see \cite{Gauss} and \cite[Chapter 5]{IR}. 
For a positive integer $k$, a {\it quadratic residue} $\pmod{k}$ is any number congruent to a square $\pmod{k}$, and a {\it quadratic nonresidue} $\pmod{k}$ is any number which is not congruent to a square $\pmod{k}$. We treat zero as a special case, and we drop the adjective \lq\lq quadratic" when the context is clear. 
It is known \cite[article 98]{Gauss} that modulo an odd prime, the product of two quadratic residues is a residue, and the product of a residue and a nonresidue is a nonresidue. Hence, in order to prove our corollary, it suffices to show that the congruence  $x^2+1\equiv 0 \pmod{p}$ is not solvable. This is obvious from the first supplement to quadratic reciprocity \cite[article 64]{Gauss}:  
 For an odd prime $q$, the congruence $x^2 \equiv -1 \pmod{q}$ is solvable if and only if $q \equiv 1 \pmod{4}$.  
\end{proof}

\subsection{Lemmas}\label{sec:7-3}
\begin{lem}\label{lem:0502-2}
For $m \geq 4$, 
let $g: (\mathbb{Z}/p\mathbb{Z})^{m-1} \to \mathbb{Z}/p\mathbb{Z}$ be a map defined by 
$g(y_1, \ldots,y_{m-1})=\sum_{i=1}^{m-1} \nu_i y_i^2$, where $\nu_1, \ldots, \nu_{m-1} \in \mathbb{Z}/p\mathbb{Z}$. Let $U_j$ 
be the set of elements of $(\mathbb{Z}/p\mathbb{Z})^{m-1}$ with exactly $j$ non-zero entries ($j=1,\ldots,m-1$). Let $p^\prime$ be the minimum number of $j \in \{1,\ldots,m-1\}$ satisfying  $0 \in g(U_j)$. 
If $\nu_i \neq 0 \pmod{p}$ ($i=1,\ldots,m-1$), then 
$p^\prime=2$ or $3$.
\end{lem}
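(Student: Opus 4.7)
The plan is to show separately that $p'\neq 1$ and that $p'\leq 3$, using only elementary facts about squares in $\mathbb{Z}/p\mathbb{Z}$ together with Chevalley--Warning for the upper bound.

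First I would verify $p'\geq 2$. If $\mathbf{y}\in U_1$ has its only non-zero entry at position $i$, then $g(\mathbf{y})=\nu_i y_i^2$, and since $\nu_i\not\equiv 0\pmod{p}$, $y_i\not\equiv 0\pmod{p}$, and $\mathbb{Z}/p\mathbb{Z}$ is a field, this is non-zero. Hence $0\notin g(U_1)$.

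Next I would split into two cases to get $p'\leq 3$. The hypothesis $m\geq 4$ means $m-1\geq 3$, so there are at least three indices to work with. Suppose first that there exist distinct indices $i,j\in\{1,\ldots,m-1\}$ such that $-\nu_i/\nu_j$ is a non-zero square in $\mathbb{Z}/p\mathbb{Z}$; write $-\nu_i/\nu_j=r^2$ with $r\neq 0$. Then setting $y_i=1$, $y_j=r$, and the remaining entries to zero gives an element of $U_2$ with $g=\nu_i+\nu_j r^2=0$, so $p'=2$. In the opposite case, $-\nu_i/\nu_j$ is a non-residue for every pair $i\neq j$. Pick any three distinct indices $i,j,k$ and consider $h(u,v,w)=\nu_i u^2+\nu_j v^2+\nu_k w^2$. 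By Chevalley--Warning applied to this degree-2 polynomial in $3>2$ variables, the number of zeros of $h$ is divisible by $p$; since the origin is a zero, there is a non-trivial zero $(u_0,v_0,w_0)$. If one coordinate of this triple vanished, say $w_0=0$, then $(u_0,v_0)$ would give a non-trivial zero of $\nu_i u^2+\nu_j v^2$; the first paragraph then forces both $u_0$ and $v_0$ non-zero, contradicting our standing assumption that no such pair exists. Hence all three of $u_0,v_0,w_0$ are non-zero, and the corresponding vector lies in $U_3$, giving $p'\leq 3$.

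Combining, $p'\in\{2,3\}$, as required. The only step that required any non-trivial input was the existence of a non-trivial zero of a ternary quadratic form over $\mathbb{F}_p$, which is an immediate consequence of Chevalley--Warning; everything else is counting non-zero entries and using that $\mathbb{Z}/p\mathbb{Z}$ is a field.
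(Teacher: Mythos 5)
Your argument is correct, and it establishes both halves of the claim cleanly: $p'\geq 2$ because a single term $\nu_i y_i^2$ with $\nu_i\neq 0$ cannot vanish for $y_i\neq 0$, and $p'\leq 3$ via a non-trivial zero of a diagonal ternary quadratic form. The paper reduces to exactly the same key fact --- that $\lambda x^2+\mu y^2+\nu z^2$ has a non-trivial zero over $\mathbb{Z}/p\mathbb{Z}$ for $\lambda,\mu,\nu\neq 0$ --- but proves it by an elementary pigeonhole count: writing $p=2q+1$ and letting $U$ be the set of squares, it shows $(-\lambda U)\cap(\mu U+\nu)\neq\emptyset$ because each set has $q+1$ elements and $2(q+1)>p$, which directly produces a zero with $z=1$. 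You instead invoke Chevalley--Warning, which is a heavier (though entirely standard) tool; what you gain is a cleaner logical structure, since your explicit case split (some $-\nu_i/\nu_j$ a non-zero square versus none) makes transparent exactly when $p'=2$ and why the ternary zero must land in $U_3$ otherwise --- a step the paper leaves implicit behind the phrase ``it suffices to show.'' Either route is fine; the paper's is self-contained at the cost of a small counting lemma (its Lemma 7.2) showing $|nU+n'|=q+1$.
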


\begin{proof}
It suffices to show that for fixed $\lambda, \mu, \nu \in \mathbb{Z}/p\mathbb{Z}-\{0\}$, there are  $x,y,z \in \mathbb{Z}/p\mathbb{Z}$ such that at least one of them is not zero and $\lambda x^2+\mu y^2+\nu z^2= 0 \pmod{p}$.

Let us assume that 
$\lambda x^2+\mu y^2+\nu z^2 \neq 0 \pmod{p}$ for any $x,y,z \in (\mathbb{Z}/p\mathbb{Z})^3-\{ \mathbf{0} \}$.
Put $p=2q+1$ for a positive integer $q$.
Since $\mathbb{Z}/p\mathbb{Z}=\{ k \mod{p} \mid k =0, \pm1,  \ldots, \pm q \}$, 
$\{ x^2 \mid x \in \mathbb{Z}/p\mathbb{Z} \}=\{  k^2 \mod{p} \mid k =0, 1,\ldots,q \}$, which will be denoted by $U$.
For $n, n^\prime \in \mathbb{Z}/p\mathbb{Z}$, let us denote 
by $n U+n^\prime$ the subset $\{ n x+n^\prime \mid x \in U\} \subset \mathbb{Z}/p\mathbb{Z}$.

By the assumption, by taking $z=1 \pmod{p}$, we see that $\lambda x^2+\mu y^2+\nu \neq 0 \pmod{p}$ for any $x,y \in \mathbb{Z}/p\mathbb{Z}$.
Hence $(-\lambda U) \cap (\mu U+\nu) =\emptyset$.
Since $-\lambda U$ and $\mu U+\nu$ each consist of $q+1$ elements by Lemma \ref{lem:0502-1}, and $2(q+1)>p$, $(-\lambda U) \cap (\mu U+\nu) \neq \emptyset$. This is a contradiction, and hence we have the required conclusion.

\end{proof}

\begin{lem}\label{lem:0502-1}
We use the same notations as in the proof of Lemma \ref{lem:0502-2}. 
Then, for $n, n^\prime \in \mathbb{Z}/p\mathbb{Z}$ with $n \neq 0 \pmod{p}$, $n U+n^\prime$ consists of $q+1$ elements.
\end{lem}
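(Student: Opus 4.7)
The plan is to reduce the claim to the elementary fact that squaring modulo an odd prime is two-to-one on nonzero elements, and then exploit that the map $x \mapsto nx + n'$ is a bijection of $\mathbb{Z}/p\mathbb{Z}$ whenever $n \not\equiv 0 \pmod{p}$.

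First I would establish that $U$ itself has exactly $q+1$ elements. Writing $p = 2q+1$, I want to show that $0^2, 1^2, \ldots, q^2$ are pairwise distinct modulo $p$. Suppose $a^2 \equiv b^2 \pmod{p}$ with $0 \le a, b \le q$; then $p \mid (a-b)(a+b)$, and since $p$ is prime, either $p \mid a-b$ or $p \mid a+b$. The first forces $a = b$ because $|a-b| \le q < p$, and the second forces $a+b = 0$ (hence $a=b=0$) because $0 \le a+b \le 2q < p$. Thus $|U| = q+1$.

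Next, since $n \not\equiv 0 \pmod{p}$ and $\mathbb{Z}/p\mathbb{Z}$ is a field, the affine map $\varphi\colon \mathbb{Z}/p\mathbb{Z} \to \mathbb{Z}/p\mathbb{Z}$ defined by $\varphi(x) = nx + n'$ is a bijection (its inverse is $y \mapsto n^{-1}(y - n')$). Therefore $nU + n' = \varphi(U)$ has the same cardinality as $U$, namely $q+1$.

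The argument is essentially a one-paragraph verification, so there is no genuine obstacle; the only point requiring care is the elementary counting of squares modulo an odd prime, which is handled by the prime-divisibility argument above. No appeal to the earlier machinery in the paper (quandles, torus-covering knots, etc.) is needed.
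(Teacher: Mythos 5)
Your proof is correct and is essentially the same as the paper's: both rest on the observation that for distinct $i,j\in\{0,1,\dots,q\}$ the difference of squares factors as $(i+j)(i-j)$ with neither factor divisible by $p$. The only cosmetic difference is that you first count $|U|=q+1$ and then apply the affine bijection $x\mapsto nx+n'$, whereas the paper folds the two steps into a single computation with $ni^2+n'-(nj^2+n')=n(i+j)(i-j)$.
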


\begin{proof}
Let us take integers representing $n, n^\prime$, which we denote by the same notations. 
It suffices to show that $n i^2 +n^\prime$ and $n j^2 +n^\prime$ are distinct modulo $p$ for $i,j \in \{0,1,\ldots,q\}$ with $i \neq j$.
We see that $(n i^2 +n^\prime)-(n j^2 +n^\prime)
=n(i+j)(i-j)$. For $i,j \in \{0,1,\ldots,q\}$ with $i \neq j$, since $0 < i+j <2q+1=p$ and 
$-p<-q \leq i-j \leq q<p$, $i+j, i-j$ are not zero modulo $p$. Since $p$ is prime, together with $n \not\equiv 0 \pmod{p}$, we see that $n i^2 +n^\prime$ and $n j^2 +n^\prime$ are distinct modulo $p$ for $i,j \in \{0,1,\ldots,q\}$ with $i \neq j$. Hence we have the conclusion.
\end{proof}

\begin{lem}\label{lem:0502-4}
For $\mathbf{y}=(y_1,\ldots,y_k) \in R_p^k$, 
assume that $\mathcal{R}_\mathbf{y}=\mathrm{id}$ and $k>0$ is even. 
Then the 2-cocycle $f_p(s,t)$ defined by (\ref{0901-3}) is:
\[f_p(s,t)=K(s-t),
\]
where $K$ is a constant integer modulo $p$ (see Remark \ref{lem:0502-3}) defined by
\[
K=-\frac{2}{p}(y_1^p-y_2^p+y_3^p- \ldots -y_k^p).
\]
\end{lem}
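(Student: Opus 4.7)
The plan is to rewrite $\theta_p$ via the Fermat quotient $\psi(x):=(x^p-x)/p\in\mathbb Z$ so that the sum defining $f_p$ telescopes. The key identity is the exact equality
\[
\theta_p(s,t,u)=(s-t)\bigl[\psi(t)+\psi(2u-t)-2\psi(u)\bigr].
\]
To prove it, note that $R(a,b):=\psi(a+b)-\psi(a)-\psi(b)$ equals $\bigl((a+b)^p-a^p-b^p\bigr)/p$; with $(a,b)=(2u-t,t)$ this gives $\bigl((2u-t)^p+t^p-2u^p\bigr)/p=(2^p-2)u^p/p-R(2u-t,t)$. The specialization $(a,b)=(u,u)$ yields $\psi(2u)=(2^p-2)u^p/p+2\psi(u)$, so the $(2^p-2)u^p/p$ and $-\psi(2u)$ contributions cancel, leaving the asserted formula.

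Setting $\alpha_j:=\mathcal R_{(y_1,\dots,y_{j-1})}$, a direct induction gives $\alpha_j(x)=(-1)^{j-1}x+2c_j$ with $c_j=\sum_{i=1}^{j-1}(-1)^{j-1-i}y_i\in\mathbb Z$. Hence $\alpha_j(s)-\alpha_j(t)=(-1)^{j-1}(s-t)$ and $\alpha_j(t)*y_j=\alpha_{j+1}(t)$; plugging into the key identity yields
\[
\theta_p(\alpha_j(s),\alpha_j(t),y_j)=(-1)^{j-1}(s-t)\bigl[\psi(\alpha_j(t))+\psi(\alpha_{j+1}(t))-2\psi(y_j)\bigr].
\]
Summing $j=1,\dots,k$ (with $k$ even), the alternating-sign combination of the brackets $\psi(\alpha_j(t))+\psi(\alpha_{j+1}(t))$ telescopes to $\psi(\alpha_1(t))-\psi(\alpha_{k+1}(t))=\psi(t)-\psi(\alpha_{k+1}(t))$.

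The delicate step is evaluating $\psi(\alpha_{k+1}(t))$. Setting $B:=\sum_{j=1}^k(-1)^{j-1}y_j$, one has $c_{k+1}=\sum_{i=1}^k(-1)^{k-i}y_i=-B$ (using $k$ even), and the hypothesis $\mathcal R_{\mathbf y}=\mathrm{id}$ is equivalent to $c_{k+1}\equiv0\pmod p$, so $c_{k+1}=pM$ with $M=-B/p\in\mathbb Z$ and $\alpha_{k+1}(t)=t+2pM$ as integers. I will prove the Fermat-quotient shift rule $\psi(x+pn)\equiv\psi(x)-n\pmod p$ by expanding $(x+pn)^p$ modulo $p^3$: only the $i=p-1$ binomial term survives past $O(p^3)$, giving $(x+pn)^p=x^p+p^2nx^{p-1}+O(p^3)$, hence $\psi(x+pn)=\psi(x)+pnx^{p-1}-n+O(p^2)\equiv\psi(x)-n\pmod p$. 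With $n=2M$ this yields $\psi(\alpha_{k+1}(t))\equiv\psi(t)-2M\pmod p$, so the telescoped contribution equals $2M$.

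Finally, writing $A:=\sum_{j=1}^k(-1)^{j-1}y_j^p$, one has $-2\sum_{j=1}^k(-1)^{j-1}\psi(y_j)=-2(A-B)/p$, and combining with $2M=-2B/p$ gives
\[
f_p(s,t)\equiv(s-t)\Bigl[2M-\tfrac{2(A-B)}{p}\Bigr]=(s-t)\Bigl[-\tfrac{2B}{p}-\tfrac{2(A-B)}{p}\Bigr]=-\tfrac{2A}{p}(s-t)\pmod p,
\]
which is precisely $K(s-t)$. The main obstacle is the integer-lift bookkeeping in the telescope: the congruence $\alpha_{k+1}(t)\equiv t\pmod p$ alone is insufficient, and the shift rule is what supplies the $2M$ correction that cancels the $-2B/p$ piece coming from the $\psi(y_j)$ sum, leaving the clean coefficient $K=-2A/p$.
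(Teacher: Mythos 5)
Your proof is correct and follows essentially the same route as the paper: both exploit $s_j-t_j=(-1)^{j-1}(s-t)$ and telescope the alternating sum via $t_j*y_j=t_{j+1}$, your Fermat-quotient identity $\psi(t)+\psi(2u-t)-2\psi(u)=\bigl((2u-t)^p+t^p-2u^p\bigr)/p$ being a repackaging of the paper's $\kappa(t,u)$. The only real difference is bookkeeping: you carry unreduced integer lifts, so $\alpha_{k+1}(t)=t+2pM$ and you need the shift rule $\psi(x+pn)\equiv\psi(x)-n \pmod p$ to produce the $2M$ that cancels the $-2B/p$ term, whereas the paper reduces mod $p$ at each step so that $t_{k+1}=t_1$ exactly and the boundary term vanishes on the nose.
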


\begin{proof}
Put $s_j=\mathcal{R}_{(y_1, \ldots,y_{j-1})}(s)$ and 
$t_j=\mathcal{R}_{(y_1, \ldots,y_{j-1})}(t)$ for a positive integer $j$.
Put $\kappa(t,u)=((t*u)^p+t^p-2u^p)/p$.
By definition of the 2-cocycle, we have
\begin{eqnarray*}
f_p(s,t) &=& \sum_{j=1}^k 
\theta_p(s_j,t_j,y_j)\\
&=& \sum_{j=1}^k (s_j-t_j) \,\kappa(t_j, y_j).
\end{eqnarray*}

We show that $s_j-t_j=(-1)^{j-1}(s-t)$ for a positive integer $j$. 
We see that $s_1-t_1=s-t$. 
Assume that $s_n-t_n=(-1)^{n-1}(s-t)$.
Then $s_{n+1}-t_{n+1}=\mathcal{R}_{y_n}(s_n)-
\mathcal{R}_{y_n}(t_n)=(2y_n-s_n)-(2y_n-t_n)
=-(s_n-t_n)=(-1)^n(s-t)$. Hence, by induction on  $n$, we see that $s_j-t_j=(-1)^{j-1}(s-t)$ for any $j>0$.
 Further, $t_j*y_j=\mathcal{R}_{y_j}(t_j)=t_{j+1}$ for any $j>0$. 
Hence 
\begin{eqnarray*}
f_p(s,t) &=& \sum_{j=1}^k
\theta_p(s_j,t_j,y_j)\\
&=& \frac{1}{p}(s-t) \sum_{j=1}^k (-1)^{j-1}(t_{j+1}^p+t_j^p-2y_j^p).
\end{eqnarray*}
Since $k$ is even and $t_{k+1}=\mathcal{R}_{(y_1, \ldots,y_{k})}(t)=t=t_1$ by the assumption that $\mathcal{R}_\mathbf{y}=\mathrm{id}$, we see that 
\begin{eqnarray*}
f_p(s,t) 
&=& \frac{1}{p}(s-t) \sum_{j=1}^k (-1)^{j-1}(-2y_j^p)\\
&=& K(s-t),
\end{eqnarray*}
where  
\[
K=-\frac{2}{p}(y_1^p-y_2^p+y_3^p- \ldots -y_k^p).
\]
\end{proof}

\begin{rem}\label{lem:0502-3}
In the situation of Lemma \ref{lem:0502-4}, since the 2-cocycle $f_p$ is well-defined and valued in $\mathbb{Z}/p\mathbb{Z}$, $K$ is 
a constant integer modulo $p$. 
\end{rem}
 
 \begin{lem}\label{lem:0505-01}
Let $f : R_p \times R_p \to \mathbb{Z}/p\mathbb{Z}$ be a 2-cocycle defined by $f(s,t)=s-t$.
Then, for any classical link $L$ and any $p$-coloring $C$,
$\Phi_f(L;C)=0$.
\end{lem}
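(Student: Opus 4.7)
The plan is to exhibit $f$ as a quandle $2$-coboundary; the conclusion will then follow from the standard principle \cite{CJKLS} that coboundaries contribute trivially to the quandle cocycle invariant on any colored diagram.

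First I would look for a $1$-cochain $g \colon R_p \to \mathbb{Z}/p\mathbb{Z}$ whose coboundary $\delta g(s,t) = g(s) - g(s*t)$ equals $f(s,t) = s-t$. Since $s*t = 2t-s$ in $R_p$ and $p$ is odd (so that $2$ is invertible modulo $p$), the natural candidate is the linear map $g(x) = x/2$. A short computation then gives
\[
\delta g(s,t) = \frac{s}{2} - \frac{2t-s}{2} = s-t = f(s,t),
\]
so that $f = \delta g$.

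Once $f$ is known to be a coboundary, I would invoke the cohomological invariance of $\Phi_f$: the quandle cocycle invariant depends only on the cohomology class of the coefficient cocycle, and in particular vanishes on coboundaries (see \cite{CJKLS}). If a direct verification is preferred, the weight at each crossing $r$ of sign $\varepsilon(r) \in \{\pm 1\}$ can be rewritten as $\varepsilon(r)\bigl(g(x) - g(x')\bigr)$, where $x$ and $x'$ are the colors of the two halves of the broken under-strand at $r$; grouping these contributions along each arc of $D$ gives a telescoping cancellation.

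The only step requiring any thought is spotting that $g(x) = x/2$ is the required primitive, and this is immediately suggested by the requirement that $g$ be linear of slope $1/2$. Everything else is either a routine verification or a citation to a standard fact, so I do not anticipate a serious obstacle.
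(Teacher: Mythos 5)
Your proof is correct. Identifying $f(s,t)=s-t$ as the coboundary of the $1$-cochain $g(x)=x/2$ (using that $2$ is invertible mod the odd prime $p$) and then citing the standard fact that coboundaries contribute trivially to the state-sum is a valid and clean route; indeed the paper itself invokes exactly this principle a few lines earlier, when it deduces $\Phi_{f_p}(\hat b;C)=0$ from the vanishing of $H^2(R_p;\mathbb{Z}/p\mathbb{Z})$. The paper's own proof of this lemma is instead a direct combinatorial count: it cuts the over-arcs at each crossing, assigns $\pm z$ to the four arc-ends around each crossing according to orientation, observes that the grand total vanishes because each arc contributes with opposite signs at its two ends, and computes that this total equals $2\Phi_f(L;C)$, whence $\Phi_f(L;C)=0$ since $p$ is odd. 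The two arguments are the same telescoping cancellation in different clothing --- the paper divides by $2$ at the end, you divide by $2$ inside the primitive $g$ --- so your version buys a shorter, more conceptual proof at the cost of relying on the cited coboundary-vanishing theorem (or on the telescoping verification you sketch, which is exactly the paper's computation restricted to the under-strands). The only point to watch is the sign convention for the quandle coboundary $\delta g$; with the opposite convention the primitive is $g(x)=-x/2$, which of course changes nothing.
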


\begin{proof}
Let $D$ be a diagram of $L$, and let us denote by $X_2(D)$ the set of crossings of $D$. 
Let $C$ be a $p$-coloring of $D$.
Let us consider a new graph $D^\prime$ obtained from $D$ by cutting each over-arc at each crossing. For each crossing $r$, there are four arcs in $D^\prime$. For a crossing colored as in the left figure of Fig. \ref{1001-1}, its {\it color} is the pair $(x,y)$. Let us attach a weight $+z$ (resp. $-z$) at each arc if it is oriented toward (resp. from) $r$, where $z$ is its color by $C$. 
Let $W(r;C)$ be the sum of weights of the four arcs in $D^\prime$ around $r$.
Then, we see that 
\begin{equation}\label{eq:1}
\sum_{r \in X_2(D)} W(r;C)=0.
\end{equation}
We can see that $W(r;C)=y+x-y-x*y=2(x-y)$ if $r$ is a positive crossing and $W(r;C)=x*y+y-x-y=-2(x-y)$ if $r$ is a negative crossing, where $(x,y)$ is the color of $r$ by $C$; thus $W(r;C)=2W_f(r;C)$ for any $r$ and $C$. By (\ref{eq:1}), $2\Phi_f(L;C)=0$ for any $C$. Since $p$ is odd prime, $2 \not \equiv 0 \pmod{p}$, and hence $\Phi_f(L;C)=0$ for any $C$. (See also \cite{CKS}.)
\end{proof}

\begin{lem}\label{lem:0518-1}
Let us use the same notations as in the proof of Theorem \ref{thm:0510-1}, and put $y_i=x_i-x_{i+1}$ ($i=1,\ldots,m-1$), where $x_i \in R_p$ is the color of the $i$th initial arc of the diagram of the basis $m$-braid $b$ ($i=1, \ldots,m$).
Then, a 1-handle addition to $F$ implies 
that $y_1, \ldots, y_{m-1}$ satisfy $f=0$, where $f$ is a linear combination of  $y_1, \ldots, y_{m-1}$ over  $\mathbb{Z}/p\mathbb{Z}$.
\end{lem}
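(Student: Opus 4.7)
The plan is to combine two ingredients: (a) a single 1-handle addition can impose at most one new linear relation on the dihedral coloring space, and (b) the dihedral quandle admits a global additive shift symmetry that forces any such relation to have coefficients summing to zero. Once these are in hand, a routine change of basis from $(x_1, \dots, x_m)$ to $(y_1, \dots, y_{m-1})$ converts a zero-sum linear form in the $x_i$ into a linear form in the $y_j$.

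First I would observe that under the hypotheses of Theorem \ref{thm:0510-1}, $A_b = A_{\Delta^{2n}} = I$ by Lemma \ref{lem:0501-2} and the proof of Theorem \ref{thm:=}, so $\mathrm{Col}_p(F)$ is parametrized freely by $(x_1, \dots, x_m) \in R_p^m$. Next I would invoke Iwakiri's dimension-drop principle (the key fact already cited in the exposition preceding Theorem \ref{thm:0516-01}, namely that a 1-handle addition reduces the dimension of the linear space of $p$-colorings by at most one): the colorings of $F'$ are cut out from $R_p^m$ by a single linear equation
\[
\sum_{i=1}^{m} a_i x_i = 0, \qquad a_i \in \mathbb{Z}/p\mathbb{Z}.
\]

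Then I would apply the shift symmetry of $R_p$. Because $(x+c)*(y+c) = 2(y+c)-(x+c) = (x*y)+c$, every coloring constraint at a crossing or double-point curve of any diagram, including the constraints coming from the new tube, is preserved under the global shift $x_i \mapsto x_i + c$. Hence if $(x_1,\dots,x_m)$ is a valid coloring of $F'$ then so is $(x_1+c,\dots,x_m+c)$; substituting into the new relation yields $\sum_i a_i (x_i+c) = 0$, and comparing gives $c\sum_i a_i = 0$ for every $c \in R_p$, so $\sum_i a_i \equiv 0 \pmod{p}$.

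Finally, setting $c_j := \sum_{i=1}^{j} a_i$ (so that $c_m = 0$ by the previous step) and telescoping gives
\[
\sum_{i=1}^{m} a_i x_i \;=\; \sum_{j=1}^{m-1} c_j (x_j - x_{j+1}) \;=\; \sum_{j=1}^{m-1} c_j y_j,
\]
so the 1-handle addition forces $f(y_1, \dots, y_{m-1}) := \sum_{j=1}^{m-1} c_j y_j$ to vanish, which is the required linear relation. The main conceptual point is the shift-symmetry step, which is what rules out any nontrivial dependence on an overall constant; the rest is bookkeeping, and step (a) is a direct invocation of the framework of \cite{Iwakiri}.
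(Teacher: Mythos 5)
Your proof is correct, but it reaches the conclusion by a genuinely different route from the paper's. The paper first positions the 1-handle so that its projection into $\mathbb{R}^3$ is embedded, which makes the new constraint explicit: the colors of the two attaching sheets must agree, $C(\alpha)=C(\beta)$. It then shows by induction on the quandle operation that the color of \emph{every} sheet can be written uniquely as $x_1+f$ with $f$ in the span $Y$ of $y_1,\dots,y_{m-1}$ (using $x_i=x_1-\sum_{j<i}y_j$ and the fact that $(x_1+f)*(x_1+g)=x_1+(2g-f)$), so the difference $C(\alpha)-C(\beta)$ automatically lands in $Y$. You instead take the existence of a single new homogeneous linear relation $\sum a_i x_i=0$ as given from Iwakiri's framework, and then use the shift automorphism $x\mapsto x+c$ of $R_p$ to force $\sum a_i\equiv 0$, after which the telescoping substitution $c_j=\sum_{i\le j}a_i$ is exactly right. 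The two arguments are essentially dual: the paper's normal form ``every sheet color lies in $x_1+Y$'' is precisely the statement that the coloring space is equivariant under your shift action, so your symmetry step is a cleaner, more conceptual packaging of the same fact. What your version buys is brevity and a transparent explanation of \emph{why} the relation must be zero-sum; what the paper's version buys is self-containedness at the one point where you lean on the literature, namely the claim that the colorings of $F'$ sit inside $R_p^m$ as the zero set of a single linear form --- the paper actually supplies the geometric input (the embedded projection of the handle and the resulting condition $C(\alpha)=C(\beta)$, together with linearity of every sheet color in $x_1,\dots,x_m$) that justifies this. If you wanted your write-up to stand alone, you would need to add that one geometric sentence; as a proof modulo the fact the paper itself attributes to Iwakiri, it is sound.
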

\begin{proof}

Let $\tilde{\pi}:\mathbb{R}^4 \to \mathbb{R}^3$ be a generic projection such that $\tilde{\pi}|_{N(T)}=\pi^\prime :N(T) \to I \times T$ (see Section \ref{sec:3}), and let $H_0$ be a 1-handle attaching to $F_0=F$.
Let us denote by $D_0$ the diagram $\tilde{\pi}(F_0)$ with over/under information at each double point curve.
We can transform $F_0$ and $H_0$ by equivalence to $F_1$ and $H_1$ satisfying the following: $\tilde{\pi}(H_1)$ is an embedding of a 3-ball $D^2 \times I$ in $\mathbb{R}^3$, and $\tilde{\pi}(H_1) \cup \tilde{\pi}(F_1)=(D^2 \times \partial I) \cap \tilde{\pi}(F_1)$; see \cite{Boyle88, HoKa79}. 
This implies that a 1-handle addition induces the following condition: 
\begin{equation}\label{eq:0519-1}
C(\alpha)=C(\beta),
\end{equation}
where $\alpha, \beta$ are the sheets of the diagram $D_1=\tilde{\pi}(F_1)$ attaching $\tilde{\pi}(H_1)$, and $C \in \mathrm{Col}_p(D_1)$.

Let us denote by $Y$ the linear space spanned by $\{y_1, \ldots,y_{m-1}\}$ over $\mathbb{Z}/p\mathbb{Z}$ such that each element is expressed as a linear combination.
We show that 
any $p$-coloring for $D_1$ is presented by $(x_1, \ldots,x_m)$, and the color of each sheet can be uniquely written as $x_1+f$ $(f \in Y)$, as follows.  
Any $p$-coloring for $D_0$ is presented by $(x_1, \ldots,x_m)$ (see the proof of Theorem \ref{thm:0505-03}), and the color of each sheet of $D_0$ can be written by a 
well-formed expression constructed by using letters among $x_1, \ldots, x_m$ and the binary operators $*$ and $\bar{*}$. Hence any $p$-coloring for $D_1$ is also presented by $(x_1, \ldots,x_m)$, and it follows from $\bar{*}=*$ that the color of each sheet of $D_1$ is written by an expression constructed by using letters among $x_1, \ldots, x_m$ and the binary operator $*$.
We show that the color of each sheet can be written as $x_1+f$ $(f \in Y)$.
It suffices to show the following: (1) There exists $f_i \in Y$ satisfying $x_i=x_1+f_i$ ($i=1,\ldots,m$), and (2) for any $f,g \in Y$, there exists $h \in Y$ such that $(x_1+f)*(x_1+g)=x_1+h$. We can show (1) by taking $f_i=-\sum_{j=1}^{i-1} y_j$ for $i=1,2,\ldots,m$. Since $(x_1+f)*(x_1+g)=2(x_1+g)-(x_1+f)=x_1+(2g-f)$, we see that (2) holds true.
Since $\mathrm{Col}_p(D_0)$ is isomorphic to $(\mathbb{Z}/p\mathbb{Z})^m$ by the proof of Theorem \ref{thm:0505-03} (see also Lemma \ref{lem:linear}), $\mathrm{Col}_p(D_1)$ is also isomorphic to $(\mathbb{Z}/p\mathbb{Z})^m$; thus the color of each sheet of $D_1$ is uniquely written as a linear combination of $x_1, \ldots,x_m$, and hence, it is uniquely written as $x_1+f$ ($f \in Y$).

Thus, the color of each sheet can be written uniquely as $x_1+f$ $(f \in Y)$, and it follows that
the condition (\ref{eq:0519-1}) implies that $y_1, \ldots, y_{m-1}$ satisfy $f^\prime=0$ for some $f^\prime \in Y$; hence we have the conclusion.

\end{proof}

\section*{Acknowledgements}
The author would like to express her sincere gratitude to  Hiroki Kodama and the Referee for their helpful comments. 
The author is supported by JSPS Research Fellowships for Young Scientists.

\end{document}